\newtheorem{theorem}{Theorem}[section]
\newtheorem{proposition}[theorem]{Proposition}
\newtheorem{lemma}[theorem]{Lemma}
\newtheorem{proof}{\textmd{\textit{Proof.}}}
\newtheorem{remark}[theorem]{Remark}
\newtheorem{example}[theorem]{Example}
\newtheorem{definition}[theorem]{Definition}
\newtheorem{ack}{Acknowledgments}
\newcommand{\qedd}{\hfill \Box}
\newcommand{\ve}{\varepsilon}
\newcommand{\del}{\partial}
\newcommand{\lra}{\longrightarrow}
\newcommand{\N}{\ensuremath{\mathbb{N}}}
\newcommand{\R}{\ensuremath{\mathbb{R}}}
\newcommand{\bD}{\ensuremath{\mathbf{D}}}
\newcommand{\bH}{\ensuremath{\mathbf{H}}}
\newcommand{\bK}{\ensuremath{\mathbf{K}}}
\newcommand{\bL}{\ensuremath{\mathbf{L}}}
\newcommand{\bR}{\ensuremath{\mathbf{R}}}
\newcommand{\bs}{\ensuremath{\mathbf{s}}}
\newcommand{\cC}{\ensuremath{\mathcal{C}}}
\newcommand{\cD}{\ensuremath{\mathcal{D}}}
\newcommand{\cE}{\ensuremath{\mathcal{E}}}
\newcommand{\cG}{\ensuremath{\mathcal{G}}}
\newcommand{\cH}{\ensuremath{\mathcal{H}}}
\newcommand{\cJ}{\ensuremath{\mathcal{J}}}
\newcommand{\cP}{\ensuremath{\mathcal{P}}}
\newcommand{\cT}{\ensuremath{\mathcal{T}}}
\newcommand{\cV}{\ensuremath{\mathcal{V}}}
\newcommand{\fm}{\ensuremath{\mathfrak{m}}}
\newcommand{\sT}{\ensuremath{\mathsf{T}}}
\newcommand{\sU}{\ensuremath{\mathsf{U}}}
\def\diam{\mathop{\mathrm{diam}}\nolimits}
\def\vol{\mathop{\mathrm{vol}}\nolimits}
\def\div{\mathop{\mathrm{div}}\nolimits}
\def\Hess{\mathop{\mathrm{Hess}}\nolimits}
\def\loc{\mathop{\mathrm{loc}}\nolimits}
\def\Ric{\mathop{\mathrm{Ric}}\nolimits}
\def\id{\mathop{\mathrm{id}}\nolimits}
\def\Ent{\mathop{\mathrm{Ent}}\nolimits}
\def\trace{\mathop{\mathrm{trace}}\nolimits}
\def\CD{\mathop{\mathrm{CD}}\nolimits}
\def\HS{\mathop{\mathrm{HS}}\nolimits}
\newcommand{\Nabla}{\bm{\nabla}}
\newcommand{\Lap}{\bm{\Delta}}
\newcommand{\btau}{\bm{\tau}}
\newcommand{\rev}[1]{\overleftarrow{#1}}
\newcommand{\wt}[1]{\widetilde{#1}}
\newcommand{\ol}[1]{\overline{#1}}
\newcommand{\ora}[1]{\overrightarrow{#1}}
\title{On the curvature and heat flow\\ on Hamiltonian systems}
\author{Shin-ichi OHTA\thanks{Department of Mathematics, Kyoto University,
Kyoto 606-8502, Japan ({\sf sohta@math.kyoto-u.ac.jp});
Supported in part by the Grant-in-Aid for Young Scientists (B) 23740048.}}
\date{}
\begin{document}

\maketitle

\begin{abstract}
We develop the differential geometric and geometric analytic studies of Hamiltonian systems.
Key ingredients are the curvature operator,
the weighted Laplacian, and the associated Riccati equation.
We prove appropriate generalizations of
the Bochner--Weitzenb\"ock formula and Laplacian comparison theorem,
and study the heat flow.
\bigskip

\noindent
MSC (2010): 53C21 (Primary); 58J35, 49Q20 (Secondary)\\
Keywords: Hamiltonian, curvature, comparison theorem, heat flow
\end{abstract}

\tableofcontents

\section{Introduction}\label{sc:intro}

The aim of this article is to apply the recently developed technique in Finsler geometry
to the study of Hamiltonian systems.
A Finsler manifold carries a (Minkowski) norm on each tangent space.
Although Finsler manifolds form a much wider class than Riemannian manifolds,
the notion of curvature makes sense and we can consider various comparison theorems
similarly to the Riemannian case (see, e.g., \cite{Shvol}).
Especially, the weighted Ricci curvature introduced by the author~\cite{Oint}
has fruitful applications including the curvature-dimension condition (\cite{Oint}),
Laplacian comparison theorem for the natural nonlinear Laplacian (\cite{OShf}),
Bochner--Weitzenb\"ock formula and gradient estimates (\cite{OSbw}),
and generalizations of the Cheeger--Gromoll splitting theorem (\cite{Ospl}).
To be precise, the weighted Ricci curvature is defined for a pair consisting of a Finsler manifold
and a measure on it, and our Laplacian depends on the choice of the measure
(see Subsections~\ref{ssc:Fins}, \ref{ssc:Lap} for details).

Then, it is natural to expect that the theory of curvature can be applied beyond Finsler manifolds,
and a class of manifolds $M$ endowed with Lagragians $\bL$ or,
equivalently, Hamiltonians $\bH$ is a natural choice.
In fact, on the one hand, we know that Agrachev and Gamkrelidze~\cite{AG} (see also \cite{Agr})
have developed the theory of curvature operator for Hamiltonian systems in connection
with optimal control theory (see \cite{Agr1} for a dynamical application).
On the other hand,
optimal transport theory (which is related to the curvature-dimension condition)
for Lagrangian cost functions has been well investigated (\cite{BeBu}, \cite{FF}, \cite{Vi2}).
Furthermore, Lee~\cite{Le2} recently showed a Riccati equation
(see also \cite{ALge}, \cite{ALbl}, \cite{LLZ} for the sub-Riemannian case)
as well as convexity estimates for entropy functionals along smooth optimal transports
for general (time-dependent) Hamiltonian systems, by means of the curvature operator.
His unified approach recovers both the curvature-dimension condition
($\CD(K,\infty)$ and $\CD(0,N)$ to be precise) for Riemannian or Finsler manifolds
and various monotonicity formulas along flows in Riemannian metrics
related to the Ricci flow.

Our Hamiltonian will always be time-independent and non-negative.
Compared with the Finsler situation, the lack of the homogeneity causes many difficulties,
for example,
we need to take care of the difference between the Lagrangian and the Hamiltonian
(they coincide as functions via the Legendre transform in the Finsler case).
Nevertheless, by combining the Riccati equation and the technique in the Finsler case,
we prove the Bochner--Weitzenb\"ock formula (Theorem~\ref{th:BW})
and Laplacian comparison theorem (Theorem~\ref{th:Lcomp}).
We also obtain functional inequalities from the convexity of the relative entropy (Theorem~\ref{th:funct}).
In these results, we use the weighted Ricci curvature derived from the curvature operator
as well as the Laplacian $\Lap^{\bH}_{\fm}$ induced from the Hamiltonian $\bH$
and the reference measure $\fm$ on $M$.
In general, this Laplacian is not only nonlinear
($\Lap^{\bH}_{\fm}(f+g) \neq \Lap^{\bH}_{\fm}f +\Lap^{\bH}_{\fm} g$)
but also non-homogeneous ($\Lap^{\bH}_{\fm} (cf) \neq c\Lap^{\bH}_{\fm} f$ for $c \in \R$).

We also study the evolution equation $\del_t u=\Lap^{\bH}_{\fm}u$
which can be thought of as the `heat equation' in our context.
In various settings including Finsler manifolds (\cite{OShf}),
the heat flow is regarded as gradient flow in the following two ways:
\begin{enumerate}[(I)]
\item the gradient flow of the Dirichlet energy in the $L^2$-space;
\item the gradient flow of the relative entropy in the $L^2$-Wasserstein space.
\end{enumerate}
The identification of these two strategies was recently established
for general metric measure spaces satisfying the curvature-dimension condition (\cite{AGShf}).
Although a Hamiltonian does not induce a distance function,
we verify the analogue of the former approach (I) in Theorem~\ref{th:Lgf}.
Indeed, since our energy form $\cE(u)=\int_M \bH(du) \,d\fm$
is a convex functional on the $L^2$-space,
the classical theory of Br\'ezis et al applies.
Interestingly, however, the latter strategy (II) leads to the different equation
$\del_t u=-\div_{\fm}(u\Nabla[-\log u])$ (Theorem~\ref{th:Wgf}).
This shows that the identification of (I) and (II) is essentially due to the homogeneity of $\bH$.

We explicitly calculate the curvature and Laplacian in the special class
of Hamiltonians given by convex deformations of Finsler Hamiltonians
(that is, $\bH=h \circ F^*$, where $h$ is a convex function and
$F^*$ is the dual of a Finsler structure $F:TM \lra \R$; see Subsection~\ref{ssc:Fconv}).
What is of particular interest is the $p$-homogeneous deformations
($h(t)=t^p/p$), which derives the Finsler analogue of $p$-Laplacians.
However, we do not focus on this specific case
because the aim of this work is to present the general framework.
In the same spirit, we sometimes discuss under seemingly superfluous assumptions
for the sake of technical simplicity.

\begin{ack}
I am grateful to Professor Paul W.~Y.~Lee for his helpful comments on a preliminary version of the paper.
\end{ack}

\section{Preliminaries}\label{sc:prel}

Throughout the article,
let $M$ be a connected $\cC^{\infty}$-manifold of dimension $n \ge 2$ without boundary.
For a local coordinate system $(x^i)_{i=1}^n$ on an open set $U \subset M$,
we will always consider the (fiber-wise linear) coordinates
$(x^i;v^j)_{i,j=1}^n$ of the tangent bundle $TU$
and $(x^i;\alpha_j)_{i,j=1}^n$ of the co-tangent bundle $T^*U$ given by
\[ v=\sum_{j=1}^n v^j \del_{x^j}=\sum_{j=1}^n v^j \frac{\del}{\del x^j},
 \qquad \alpha=\sum_{j=1}^n \alpha_j dx^j, \]
respectively.
We will use the usual abbreviations such as
\[ \bL_{v^i}:=\frac{\del \bL}{\del v^i}, \qquad
 \bH_{x^i \alpha_j}:=\frac{\del^2 \bH}{\del \alpha_j \del x^i} \]
for brevity, but only for the Lagrangian $\bL$ on $TM$ and the Hamiltonian $\bH$ on $T^*M$.

\subsection{Lagrangians}\label{ssc:Lag}

We consider only time-independent (autonomous) Lagrangians for simplicity.
General time-dependent Lagrangians can be treated similarly to a great extent
(see \cite{AG}, \cite{Agr}, \cite{Le2}).

\begin{definition}[Lagrangians]\label{df:Lag}
A \emph{Lagrangian} on $M$ will mean a non-negative function
$\bL:TM \lra [0,\infty)$ satisfying the following conditions:
\begin{enumerate}[(1)]
\item $\bL \in \cC^1(TM) \cap \cC^{\infty}(TM \setminus 0_{TM})$ and $\bL \equiv 0$ on $0_{TM}$,
where $0_{TM} \subset TM$ denotes the zero section;
\item (\emph{Super-linearity})
There are some complete Riemannian metric $g$ of $M$ and positive constant $C>0$ such that
$\bL(v) \ge |v|_g -C$ for all $v \in TM$;
\item (\emph{Strong convexity})
For any $x \in M$, $\bL$ has the positive-definite Hessian at every $v \in T_xM \setminus \{0\}$
(with respect to an arbitrary linear coordinate of $T_xM$).
\end{enumerate}
\end{definition}

Note that the strong convexity implies $\bL>0$ on $TM \setminus 0_{TM}$
and that the super-linearity (2) follows from (1) and (3) if $M$ is compact.
The reason why only the $\cC^1$-regularity is assumed on $0_{TM}$
is to include non-Riemannian Finsler metrics (see Remark~\ref{rm:Lag}(a)).
Then the non-negativity $\bL \ge 0$ and $\bL|_{0_{TM}} \equiv 0$
are imposed to ensure that an action-minimizing curve
$\eta:[0,T] \lra M$ with $\dot{\eta}(0) \neq 0$ always enjoys
$\dot{\eta} \neq 0$ on whole $[0,T]$ (due to the conservation of the Hamiltonian),
thus $\dot{\eta}$ lives in $TM \setminus 0_{TM}$ where $\bL$ is $\cC^{\infty}$.
These additional conditions ($\bL \ge 0$, $\bL|_{0_{TM}} \equiv 0$)
can be removed if $\bL \in \cC^2(TM)$ (as in \cite{Le2}).

A $\cC^1$-curve $\eta:[0,T] \lra M$ which is a critical point of the \emph{action}
$\int_0^T \bL(\dot{\eta}) \,dt$ (among variations fixing the endpoints)
is called an \emph{action-minimizing curve} and satisfies the \emph{Euler--Lagrange equation}
\begin{equation}\label{eq:EL}
\bL_{x^i}(\dot{\eta}) -\sum_{j=1}^n \{ \bL_{v^i x^j}(\dot{\eta}) \dot{\eta}^j
 +\bL_{v^i v^j}(\dot{\eta}) \ddot{\eta}^j \}=0
 \qquad \text{for all}\ i.
\end{equation}
To be precise, $\eta$ is either a constant curve or a $\cC^{\infty}$-curve with $\dot{\eta} \neq 0$.
Thus $\eta$ is always $\cC^{\infty}$ and \eqref{eq:EL} makes sense
(note that $\bL_{x^i}(0)=0$ since $\bL|_{0_{TM}} \equiv 0$).
Given any $v \in TM$, there is a unique action-minimizing curve
$\eta_v:(-\ve,\ve) \lra M$ with $\dot{\eta}_v(0)=v$ for sufficiently small $\ve>0$.
We say that $(M,\bL)$ is \emph{forward complete} (resp.\ \emph{complete})
if $\eta_v$ is extended to the action-minimizing curve
$\eta_v:[0,\infty) \lra M$ (resp.\ $\eta_v:\R \lra M$) for all $v \in TM$.

Let us summarize some fundamental remarks on the difference from
the Riemannian or Finsler case, caused by the non-homogeneity of $\bL$
(see also Remark~\ref{rm:Legt} below).

\begin{remark}\label{rm:Lag}
(a) If $\bL(cv)=c^2 \bL(v)$ for all $v \in TM$ and $c>0$,
then $F(v):=\sqrt{2\bL(v)}$ gives a Finsler metric
(see Subsections~\ref{ssc:Fins}, \ref{ssc:FinsH} below).
Moreover, in this case,
$F$ comes from a Riemannian metric if and only if $F^2 \in \cC^2(TM)$
(see \cite[Proposition~2.2]{Shvol}).

(b) An action-minimizing curve $\eta$ does not necessarily have a constant speed
(i.e., $\bL(\dot{\eta})$ may not be constant).
This is one of the reasons why the Hamiltonian (which is constant along the Hamiltonian flow)
fits better to our consideration.

(c) The strong convexity does not imply the uniform (strict) convexity of $\bL$
even in a single tangent space $T_xM$.
For instance, for $f(t)=|t|^p$ on $\R$ with $p \in (1,\infty)$,
we have $\lim_{t \to \infty}f''(t)=0$ if $p<2$ and $\lim_{t \downarrow 0}f''(0)=0$ if $p>2$.
This is one of the major differences from the Finsler setting,
in which the uniform convexity and smoothness
are used in various analytic and geometric estimates (see \cite[Sections~2, 3]{OShf}, \cite{Ouni}).
\end{remark}

\subsection{Hamiltonians}\label{ssc:Ham}

Let $\bL$ be a Lagrangian as in Definition~\ref{df:Lag}.
The associated \emph{Hamiltonian} is given by
\[ \bH(\alpha):=\sup_{v \in T_xM} \{ \alpha(v)-\bL(v) \}, \quad \alpha \in T_x^*M. \]
Choosing $v=0$ ensures that $\bH \ge 0$ as well as $\bH|_{0_{T^*M}} \equiv 0$.
Moreover,  $\bH>0$ on $T^*M \setminus 0_{T^*M}$ and
$\bH \in \cC^1(T^*M) \cap \cC^{\infty}(T^*M \setminus 0_{T^*M})$.
Given $\alpha \in T_x^*M$,
by virtue of the strong convexity of $\bL$, we can find a unique vector
$v \in T_xM$ satisfying $\bH(\alpha)=\alpha(v)-\bL(v)$.
Such a vector is denoted by $\btau^*(\alpha)$ and
called the \emph{Legendre transform} of $\alpha$,
and the map $\btau^*:T^*M \lra TM$ is explicitly written as
\begin{equation}\label{eq:btau*}
\btau^*(\alpha)=\sum_{i=1}^n \bH_{\alpha_i}(\alpha) \del_{x^i}.
\end{equation}
The inverse map $\btau:TM \lra T^*M$ of $\btau^*$ is similarly given by
\[ \btau(v) :=\sum_{i=1}^n \bL_{v^i}(v) dx^i. \]
Note that, for each $\alpha \neq 0$, $(\bH_{\alpha_i \alpha_j}(\alpha))_{i,j=1}^n$
is the inverse matrix of $(\bL_{v^i v^j}(\btau^*(\alpha)))_{i,j=1}^n$
so that it is positive-definite.

\begin{remark}\label{rm:Legt}
(a) The transform $\btau$ (or $\btau^*$) is a linear operator if and only if
$\bL$ comes from a Riemannian metric.
Precisely, $\btau(cv)=c\btau(v)$ holds for all $c>0$ and $v \in TM$
if and only if $\bL$ comes from a Finsler structure,
and $\btau(v+w)=\btau(v)+\btau(w)$ holds only in the Riemannian case.
Moreover, in the Finsler case, $\btau|_{T_xM}$ is differentiable at the origin
if and only if $F|_{T_xM}$ comes from an inner product.

(b) In the Finsler setting, $\bH(\btau(v))=\bL(v)$ holds and the better estimate
$\alpha(v) \le 2\sqrt{\bL(v) \bH(\alpha)}$ is available.
\end{remark}

The \emph{Hamiltonian vector field} on $T^*M$ is defined by
\[ \ora{\bH}(\alpha) :=\sum_{i=1}^n \{ \bH_{\alpha_i}(\alpha) \del_{x^i}
 -\bH_{x^i}(\alpha) \del_{\alpha_i} \} \in T_{\alpha}(T^*M). \]
Denote by $(\Phi_t(\alpha))_{t \in (-\ve,\ve)} \subset T^*M$ the corresponding \emph{Hamiltonian flow},
i.e., $\Phi_0(\alpha)=\alpha$ and $\dot{\Phi}_t(\alpha)=\ora{\bH}(\Phi_t(\alpha))$.
The Hamiltonian is preserved along $\Phi_t$,
namely $\bH(\Phi_t(\alpha))=\bH(\alpha)$ for all $t$.

A $\cC^{\infty}$-curve $\eta$ solves the Euler--Lagrange equation \eqref{eq:EL}
if and only if $\btau(\dot{\eta})$ verifies $\Phi_t(\btau(\dot{\eta}(0)))=\btau(\dot{\eta}(t))$.
That is to say, the Hamiltonian flow coincides with the flow on $TM$
generated from the Euler--Lagrange equation via the Legendre transform.
Thus the forward completeness (resp.\ completeness) of $\bL$
is equivalent to that of $\Phi_t$, namely the existence of $(\Phi_t)_{t \ge 0}$
(resp.\ $(\Phi_t)_{t \in \R}$) on whole $T^*M$.

Let us denote by $\omega=\sum_{i=1}^n d\alpha_i \wedge dx^i$
the canonical \emph{symplectic form} on $T^*M$.
Recall that
\begin{equation}\label{eq:omega}
\omega\big( d\Phi_t(v),d\Phi_t(w) \big) =\omega(v,w)
 \qquad \text{for all}\ v,w \in T_{\alpha}(T^*M),\ t \in (-\ve,\ve).
\end{equation}

\subsection{Finsler manifolds}\label{ssc:Fins}

This subsection is devoted to a concise review on the special class of Finsler manifolds,
where we have a clearer understanding of curvature and heat flow.
We refer to \cite{BCS} and \cite{Shlec} for basics of Finsler geometry.

\begin{definition}[Finsler structures]\label{df:Fins}
We say that a function $F:TM \lra [0,\infty)$ is a \emph{$\cC^{\infty}$-Finsler structure}
of $M$ if it satisfies the following:
\begin{enumerate}[(1)]
\item $F \in \cC^{\infty}(TM \setminus 0_{TM})$;
\item (\emph{Positive $1$-homogeneity}) $F(cv)=cF(v)$ for all $v \in TM$ and $c>0$;
\item (\emph{Strong convexity}) For any $v \in TM \setminus 0_{TM}$,
the $(n \times n)$-symmetric matrix
\begin{equation}\label{eq:g_ij}
g_{ij}(v) :=\frac{1}{2} \frac{\del^2 (F^2)}{\del v^i \del v^j}(v)
\end{equation}
is positive-definite.
\end{enumerate}
\end{definition}

We do not assume the \emph{absolute homogeneity} $F(-v)=F(v)$ in general.
Thanks to the positive homogeneity, the Lagrangian $\bL^F:=F^2/2$
and the corresponding Hamiltonian $\bH^F$ for instance satisfy
\[ 2\bL^F(v)=\sum_{i,j=1}^n g_{ij}(v) v^i v^j, \qquad
 \bH^F(\alpha) =\frac{1}{2}F^*(\alpha)^2 =\bL^F\big( \btau^*(\alpha) \big) \]
for all $v \in T_xM$ and $\alpha \in T_x^*M$,
where $F^*|_{T_x^*M}$ is the dual (Minkowski) norm of $F|_{T_xM}$ given by
\[ F^*(\alpha) :=\sup_{v \in T_xM,\, F(v) \le 1} \alpha(v). \]
Euler's theorem on homogeneous functions is a key tool (see \cite[Theorem~1.2.1]{BCS}).
In the context of general Lagrangians, the lack of this basic tool
causes many differences from the Finsler setting.

Similarly to the Riemannian case, one can regard the Euler--Lagrange equation
as the \emph{geodesic equation} with respect to the distance structure induced from $F$.
Precisely, the \emph{distance function} $d_F:M \times M \lra [0,\infty)$ associated with $F$
is naturally introduced by
\[ d_F(x,y):=\inf_{\eta} \int_0^1 F(\dot{\eta}) \,dt, \]
where the infimum is taken over all piece-wise $\cC^1$-curves
$\eta:[0,1] \lra M$ with $\eta(0)=x$ and $\eta(1)=y$.
Note that $d_F$ can be nonsymmetric (i.e., $d_F(y,x) \neq d_F(x,y)$)
since $F$ is only positively homogeneous.
Then a curve $\eta$ solves the Euler--Lagrange equation if and only if
it is a \emph{geodesic} in the sense that it is locally $d_F$-minimizing
and of constant speed (i.e., $F(\dot{\eta})$ is constant).

In the Finsler world, corresponding to the sectional curvature of Riemannian manifolds
is the \emph{flag curvature} $\bK(v,w)$ for linearly independent vectors $v,w \in T_xM$.
We remark that, different from the Riemannian case,
$\bK(v,w)$ depends not only on the plane $v \wedge w$ spanned by $v$ and $w$
(the \emph{flag}), but also on the choice of $v$ in it (the \emph{flagpole}).

We know a useful interpretation of the flag curvature, due to Shen (\cite[\S 6.2]{Shlec}),
as follows.
Fix $v \in T_xM \setminus\{0\}$ and extend it to a $\cC^{\infty}$-vector field $V$
on a neighborhood $U$ of $x$ (i.e., $V(x)=v$) in such a way that all integral curves of $V$ are geodesic
(this is always possible, whereas the choice of $V$ is not unique).
By the strong convexity, $V$ induces the Riemannian structure
$g_V$ on $U$ via \eqref{eq:g_ij} as
\begin{equation}\label{eq:g_V}
g_V\left( \sum_{i=1}^n a_i \del_{x^i}, \sum_{j=1}^n b_j \del_{x^j} \right)
 :=\sum_{i,j=1}^n a_i b_j g_{ij}(V).
\end{equation}
Then, for $w \in T_xM$ which is not co-linear with $v$,
the sectional curvature of $v \wedge w$ with respect to $g_V$ coincides with $\bK(v,w)$
(independent of the choice of $V$).
This remarkable fact shows the usefulness of $V$ and $g_V$ as above
in the study of Finsler manifolds from the Riemannian geometric viewpoint.

For a unit vector $v \in T_xM \cap F^{-1}(1)$,
the \emph{Ricci curvature} $\Ric^F(v)$ is defined as the trace of $\bK(v,\cdot)$
with respect to $g_v$ (defined similarly to \eqref{eq:g_V}).
Thus $\Ric^F(v)$ coincides with the Ricci curvature of $v$ with respect to
the Riemannian structure $g_V$ as in the previous paragraph.
We also set $\Ric^F(cv):=c^2 \Ric^F(v)$ for $c \ge 0$.

Now we fix a positive $\cC^{\infty}$-measure $\fm$ on $M$ and modify $\Ric^F$
into the \emph{weighted Ricci curvature} $\Ric^F_N$ which was introduced in \cite{Oint}
inspired by the theory of weighted Riemannian manifolds.

\begin{definition}[Weighted Ricci curvature]\label{df:FRic}
Given $v \in T_xM \setminus \{0\}$, extend it to a $\cC^{\infty}$-vector field $V$
on a neighborhood $U$ of $x$ such that all integral curves of $V$ are geodesic.
Using the volume measure $\vol_V$ of $g_V$, we decompose $\fm$
as $\fm=e^{-\psi} \vol_V$ on $U$.
Let $\eta$ be the geodesic with $\dot{\eta}(0)=v$.
Then we define, for $N \in (n,\infty)$,
\[ \Ric^F_N(v):=\Ric^F(v) +(\psi \circ \eta)''(0) -\frac{(\psi \circ \eta)'(0)^2}{N-n}. \]
As the limits, define
\begin{align*}
\Ric^F_{\infty}(v) &:=\Ric^F(v) +(\psi \circ \eta)''(0), \\
\Ric^F_n(v) &:=\left\{
 \begin{array}{ll} \Ric^F(v)+(\psi \circ \eta)''(0) & \text{if}\ (\psi \circ \eta)'(0)=0, \\
 -\infty & \text{if}\ (\psi \circ \eta)'(0) \neq 0. \end{array} \right.
\end{align*}
We also set $\Ric^F_N:=0$ on $0_{TM}$ for all $N \in [n,\infty]$.
\end{definition}

It is easily seen that $\Ric^F_N(v)$ is well-defined (independent of the choice of $V$)
and $\Ric^F_N(cv)=c^2 \Ric^F_N(v)$ for all $c>0$.
In the Riemannian case, $\Ric^g_{\infty}$ is the famous \emph{Bakry--\'Emery tensor}
and $\Ric^g_N$ with $N<\infty$ was introduced by Qian (see \cite{BE}, \cite{Qi}).

\begin{remark}\label{rm:wRic}
On a Riemannian manifold $(M,g)$, if we choose the volume measure as the reference measure $\fm$,
then $\Ric^g_N=\Ric^g$ for all $N \in [n,\infty]$.
On a general Finsler manifold, however, there does not necessarily exist
a measure $\fm$ with $\Ric^F_n>-\infty$ (see \cite{ORand}).
This means that there is no nice reference measure in general,
so that it is natural to start from an arbitrary measure.
\end{remark}

It was demonstrated in \cite{Oint} that $(M,d_F,\fm)$ satisfies
Lott, Sturm and Villani's \emph{curvature-dimension condition} $\CD(K,N)$
if and only if $\Ric^F_N(v) \ge KF(v)^2$ for all $v \in TM$.
Roughly speaking, in $\CD(K,N)$, $K$ acts as the lower bound of the Ricci curvature
and $N$ is regarded as the upper bound of the dimension.
This characterization extends the Riemannian one
(by \cite{CMS}, \cite{vRS}, \cite{StI}, \cite{StII}, \cite{LV1}, \cite{LV2}),
and has many analytic and geometric applications
via the general theory of curvature-dimension condition
(developed in \cite{StI}, \cite{StII}, \cite{LV1}, \cite{LV2}, \cite{Vi2}).
Moreover, the Laplacian and heat flow on $(M,F,\fm)$
(both are nonlinear except for the Riemannian case) were studied in
\cite{OShf}, \cite{OSnc} and \cite{OSbw}, where we have shown
the Bochner--Weitzenb\"ock formula and the Bakry--\'Emery and Li--Yau gradient estimates among others.
See also \cite{Ospl} for a further application to
generalizations of the Cheeger--Gromoll splitting theorem.

\section{Curvature for Hamiltonians}\label{sc:curv}

In \cite{AG}, Agrachev and Gamkrelidze introduced the curvature operator for Hamiltonian systems.
We recall their definition along the line of \cite{Le2} (see also \cite{Le1}).
Although our Lagrangian is assumed to be time-independent and non-negative,
the original definition is concerned with general time-dependent Lagrangians.

\begin{remark}\label{rm:AD}
According to \cite{AD},
the construction of curvature in \cite{AG} is equivalent to those in the independent works \cite{Gr} and \cite{Fo}
(see also Acknowledgment in the preprint version {\sf arXiv:1205.1442v6} of \cite{Le2}).
We refer to \cite{AD} and the references therein for details and some other related works.
\end{remark}

Fix $\alpha \in T_x^*M \setminus \{0\}$ and put $\bm{\alpha}(t):=\Phi_t(\alpha)$
for $t \in (-\ve,\ve)$ throughout this section.
For each $t \in (-\ve,\ve)$, let us consider
\begin{align*}
\cV_{\bm{\alpha}(t)}
&:=\mathrm{span}\{ \del_{\alpha_i} \,|\, i=1,\ldots,n \}\ \subset T_{\bm{\alpha}(t)}(T^*M), \\
\cJ_{\alpha}^t &:=(d\Phi_t)^{-1}(\cV_{\bm{\alpha}(t)})\ \subset T_{\alpha}(T^*M).
\end{align*}
The curve $(\cJ_{\alpha}^t)_{t \in (-\ve,\ve)}$
in the $n$-dimensional Grassmannian manifold in $T(T^*M)$
is called a \emph{Jacobi curve}, and represents how $T^*M$ is distorted
along the Hamiltonian flow $\Phi_t$.

Fix $\xi=\sum_{i=1}^n \xi^i \del_{\alpha_i} \in \cV_{\alpha}$.
Choose an arbitrary smooth curve $\bm{\xi}(t)=\sum_{i=1}^n \xi^i(t) \del_{\alpha_i} \in \cV_{\bm{\alpha}(t)}$
with $\bm{\xi}(0)=\xi$, and put $e_t :=(d\Phi_t)^{-1}(\bm{\xi}(t)) \in \cJ^t_{\alpha}$.
Since $e_t$ lives in the same linear space $T_{\alpha}(T^*M)$ for all $t$,
we can define $\dot{e}_t \in T_{\alpha}(T^*M)$ by differentiating the coefficients of $e_t$
(with respect to any linear coordinate of $T_{\alpha}(T^*M)$).
Recall that $\omega$ denotes the canonical symplectic form on $T^*M$.

\begin{lemma}\label{lm:quad}
We have
\[ \omega(\dot{e}_0,e_0)=\sum_{i,j=1}^n \xi^i \xi^j \bH_{\alpha_i \alpha_j}(\alpha). \]
In particular, $\omega(\dot{e}_0,e_0)$ is independent of the choice of the curve $\bm{\xi}$
and $\langle \xi,\xi \rangle_{\alpha}:=\omega(\dot{e}_0,e_0)$ defines an inner product of $\cV_{\alpha}$.
\end{lemma}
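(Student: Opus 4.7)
My plan is to compute $e_t$ in local coordinates to first order in $t$ and then read off $\omega(\dot e_0,e_0)$ from the canonical formula $\omega = \sum_i d\alpha_i \wedge dx^i$. The only input from the Hamiltonian structure is the flow equation $\dot\Phi_t = \ora{\bH}(\Phi_t)$ together with the already-noted fact that $(\bH_{\alpha_i\alpha_j}(\alpha))_{i,j}$ is positive-definite.

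First I would fix the coordinates $(x^i;\alpha_j)$ around $\alpha$ and Taylor expand the flow at $t=0$:
\[
\Phi_t(x,\alpha) = \bigl( x^i + t\bH_{\alpha_i}(\alpha) + O(t^2),\ \alpha_j - t\bH_{x^j}(\alpha) + O(t^2) \bigr).
\]
Differentiating with respect to the initial data $(x,\alpha)$, the matrix of $d\Phi_t|_\alpha$ in the bases $\{\del_{x^k},\del_{\alpha_k}\}$ at $\alpha$ and $\{\del_{x^i},\del_{\alpha_j}\}$ at $\bm{\alpha}(t)$ takes the $(2n \times 2n)$ block form
\[
M(t) = I + t \begin{pmatrix} \bH_{\alpha x}(\alpha) & \bH_{\alpha\alpha}(\alpha) \\ -\bH_{xx}(\alpha) & -\bH_{x\alpha}(\alpha) \end{pmatrix} + O(t^2),
\]
with $(\bH_{\alpha\alpha})_{ik}=\bH_{\alpha_i\alpha_k}$ etc. Inverting gives $M(t)^{-1} = I - tA + O(t^2)$ with the same block $A$; applying it to the coordinate column vector $(0,\xi(t))^{\top}$ of $\bm{\xi}(t)$ shows that the $\del_{x^i}$-component of $e_t$ equals $-t\sum_k \bH_{\alpha_i\alpha_k}(\alpha)\xi^k(t) + O(t^2)$, while the $\del_{\alpha_i}$-component equals $\xi^i(t) + O(t)$.

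Next I use the coordinate formula $\omega(v,w) = \sum_i(v^{\alpha_i}w^{x^i} - v^{x^i}w^{\alpha_i})$ with $v=\dot e_0$ and $w=e_0=\xi$. Because $\xi \in \cV_\alpha$ has vanishing $\del_x$-components, only the $-v^{x^i}w^{\alpha_i}$ terms survive; differentiating the expansion above at $t=0$ yields $\dot e_0^{x^i} = -\sum_k \bH_{\alpha_i\alpha_k}(\alpha)\xi^k$, so
\[
\omega(\dot e_0, e_0) = -\sum_{i=1}^n \dot e_0^{x^i}\,\xi^i = \sum_{i,j=1}^n \xi^i \xi^j \bH_{\alpha_i\alpha_j}(\alpha).
\]
The right-hand side depends only on $\xi^k=\xi^k(0)$ and not on $\dot\xi^k(0)$ or any further data of $\bm{\xi}$, which proves the independence from the auxiliary curve. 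Symmetry of $\langle\cdot,\cdot\rangle_\alpha$ is manifest and positive-definiteness on $\cV_\alpha\setminus\{0\}$ follows from positive-definiteness of $(\bH_{\alpha_i\alpha_j}(\alpha))_{i,j}$, recorded just after \eqref{eq:btau*}.

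The calculation is essentially bookkeeping once the coordinate form of $d\Phi_t$ is written down; the only pitfall I anticipate is keeping track of signs and block positions in inverting $M(t)$ and in applying the standard sign conventions of $\omega$. Since the vertical vector $\bm{\xi}(t)$ interacts only with the off-diagonal $\bH_{\alpha\alpha}$-block of $A$, the whole computation collapses to the claimed scalar expression without any further work.
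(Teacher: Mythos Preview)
Your argument is correct and follows essentially the same route as the paper's proof: both compute the horizontal part of $\dot e_0$ in coordinates and exploit that $e_0=\xi$ is vertical, so only that part contributes to $\omega(\dot e_0,e_0)$. The only cosmetic difference is that the paper differentiates the defining relation $d\Phi_t(e_t)=\bm\xi(t)$ at $t=0$ directly (using $\dot\Phi_t=\ora{\bH}\circ\Phi_t$) to read off $\dot e_0$, whereas you first expand $d\Phi_t$ as a matrix, invert to order $t$, and then differentiate; the resulting horizontal component $-\sum_j \bH_{\alpha_i\alpha_j}(\alpha)\xi^j\,\del_{x^i}$ is identical.
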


\begin{proof}
Since $e_0=\xi \in \cV_{\alpha}$ is in the vertical part of $T_{\alpha}(T^*M)$,
it is sufficient to calculate only the horizontal part of $\dot{e}_0$.
By differentiating $d\Phi_t(e_t)=\bm{\xi}(t)$ at $t=0$ and
noticing $\dot{\Phi}_t=\ora{\bH} \circ \Phi_t$ and $e_0=\xi$, we have
\[ \dot{e}_0
 +\sum_{i,j=1}^n \xi^j \{\bH_{\alpha_i \alpha_j}(\alpha) \del_{x^i} -\bH_{x^i \alpha_j}(\alpha) \del_{\alpha_i} \}
 =\sum_{i=1}^n \dot{\xi}^i(0) \del_{\alpha_i}. \]
Hence the horizontal part of $\dot{e}_0$ is $-\sum_{i,j=1}^n \xi^j \bH_{\alpha_i \alpha_j}(\alpha) \del_{x^i}$
and we obtain the claim.
$\qedd$
\end{proof}

\begin{remark}\label{rm:quad}
The above inner product should be compared with
\eqref{eq:g_ij} and \eqref{eq:g_V} in the Finsler case.
To be precise, $g_{\btau^*(\alpha)}$ of $T_xM$ coincides with
$\langle \cdot,\cdot \rangle_{\alpha}$ of $\cV_{\alpha}$ in Lemma~\ref{lm:quad}
via the Legendre transform and the canonical identification between $T_x^*M$ and $\cV_{\alpha}$.
\end{remark}

\begin{definition}[Canonical frames]\label{df:cano}
If a family of smooth curves $e^t_i \in \cJ^t_{\alpha}$ ($i=1,\ldots,n$) satisfies
\begin{equation}\label{eq:cano}
\omega(\dot{e}^t_i,e^t_j)=\delta_{ij}, \qquad
 \ddot{e}^t_i \in \cJ^t_{\alpha}\qquad \text{for all}\ t \in (-\ve,\ve),
\end{equation}
then we call the family $(e^t_i;\dot{e}^t_j)_{i,j=1}^n$
(referred simply by $(e^t_i)_{i=1}^n$ henceforth)
a \emph{canonical frame} along $\bm{\alpha}$.
\end{definition}

Indeed, the first condition in \eqref{eq:cano} ensures that
$(e^t_i)_{i=1}^n$ spans $\cJ^t_{\alpha}$
and $(e^t_i;\dot{e}^t_j)_{i,j=1}^n$ spans $T_{\alpha}(T^*M)$.
We also remark that
\[ \omega(\dot{e}^t_i,e^t_j) =\langle d\Phi_t(e_i^t),d\Phi_t(e_j^t) \rangle_{\bm{\alpha}(t)} \]
by \eqref{eq:omega} and Lemma~\ref{lm:quad}.
In Appendix~\ref{ssc:cano}, we will review how to construct a canonical frame
from an orthonormal basis with respect to $\langle \cdot,\cdot \rangle_{\bm{\alpha}(t)}$
along the recipe in \cite{Le2}.

Fix a canonical frame $E^t=(e_i^t)_{i=1}^n$ along $\bm{\alpha}$.

\begin{lemma}\label{lm:Lagsp}
We have $\omega(e^t_i,e^t_j)=0$ for all $i,j$ and $t$.
In particular, $\omega(e^t_i,\cJ^t_{\alpha})=0$ for all $i$ and $t$.
\end{lemma}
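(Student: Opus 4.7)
The plan is to recognize that the assertion amounts to saying that the Jacobi curve $\cJ^t_{\alpha}$ is a Lagrangian subspace of $T_{\alpha}(T^*M)$ with respect to the canonical symplectic form $\omega$. Once that is established, the statement follows at once because $e^t_i, e^t_j \in \cJ^t_{\alpha}$ by construction.

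First I would check that the vertical distribution $\cV_{\bm{\alpha}(t)}$ is itself Lagrangian in $T_{\bm{\alpha}(t)}(T^*M)$. This is immediate from the coordinate expression $\omega=\sum_i d\alpha_i \wedge dx^i$, which gives $\omega(\del_{\alpha_i},\del_{\alpha_j})=0$ for all $i,j$. Together with $\dim \cV_{\bm{\alpha}(t)}=n=\frac{1}{2}\dim T_{\bm{\alpha}(t)}(T^*M)$, this shows $\cV_{\bm{\alpha}(t)}$ is Lagrangian.

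Next I would transfer this to $\cJ^t_{\alpha}$ using the symplecticity of the Hamiltonian flow, recorded in \eqref{eq:omega}: for any $v,w \in T_{\alpha}(T^*M)$, $\omega(d\Phi_t(v),d\Phi_t(w))=\omega(v,w)$. Applying this with $v,w \in \cJ^t_{\alpha}=(d\Phi_t)^{-1}(\cV_{\bm{\alpha}(t)})$ yields
\[ \omega(v,w)=\omega\big(d\Phi_t(v),d\Phi_t(w)\big)=0, \]
since $d\Phi_t(v),d\Phi_t(w) \in \cV_{\bm{\alpha}(t)}$ and $\cV_{\bm{\alpha}(t)}$ is Lagrangian. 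Specializing to $v=e^t_i$ and $w=e^t_j$ gives the first assertion $\omega(e^t_i,e^t_j)=0$.

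For the ``in particular'' statement, I would invoke the remark made right after Definition~\ref{df:cano} that the condition $\omega(\dot{e}^t_i,e^t_j)=\delta_{ij}$ forces $(e^t_i)_{i=1}^n$ to span $\cJ^t_{\alpha}$. Then any element of $\cJ^t_{\alpha}$ is a linear combination of the $e^t_j$'s, and bilinearity of $\omega$ together with the first assertion gives $\omega(e^t_i,\cJ^t_{\alpha})=0$. I do not anticipate any real obstacle here; the only subtle point is confirming that the vertical subspace has the right dimension to be Lagrangian (not merely isotropic), which is obvious from the fiber dimension.
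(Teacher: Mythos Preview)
Your proof is correct, but it differs from the paper's argument. You prove directly that $\cJ^t_{\alpha}$ is Lagrangian for every $t$ by pulling back the Lagrangian property of $\cV_{\bm{\alpha}(t)}$ through the symplectomorphism $d\Phi_t$ (via \eqref{eq:omega}), and then specialize to the frame vectors $e^t_i$. The paper instead argues by differentiation: it notes that $\omega(e^0_i,e^0_j)=0$ since $e^0_i \in \cJ^0_{\alpha}=\cV_{\alpha}$, and then computes
\[
\del_t[\omega(e^t_i,e^t_j)]=\omega(\dot{e}^t_i,e^t_j)+\omega(e^t_i,\dot{e}^t_j)=\delta_{ij}-\delta_{ji}=0,
\]
using the canonical frame condition $\omega(\dot{e}^t_i,e^t_j)=\delta_{ij}$ directly. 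Your route is more conceptual and yields the Lagrangian property of $\cJ^t_{\alpha}$ without invoking the canonical frame structure at all; the paper's route stays entirely within the canonical frame formalism, which is thematically consistent with how the frames are used in the sequel (e.g., Lemmas~\ref{lm:O^t} and \ref{lm:symm}). Either argument is perfectly adequate here.
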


\begin{proof}
This follows from $\omega(e^0_i,e^0_j)=0$ (since $e^0_i \in \cV_{\alpha}$) and
\[ \del_t [\omega(e^t_i,e^t_j)] =\omega(\dot{e}^t_i,e^t_j)+\omega(e^t_i,\dot{e}^t_j)
 =\delta_{ij}-\delta_{ji}=0. \]
$\qedd$
\end{proof}

Hence $\cJ^t_{\alpha}$ is a Lagrangian subspace of $T_{\alpha}(T^*M)$
with respect to $\omega$.
The next lemma yields the uniqueness of a canonical frame
up to an orthogonal transformation.

\begin{lemma}{\rm (see \cite[Proposition~3.1]{Le2})}\label{lm:O^t}
Let $\ol{E}^t=(\bar{e}^t_i)_{i=1}^n$ be another canonical frame along $\bm{\alpha}$.
Then there exists an orthogonal matrix $O=(O_{ij})_{i,j=1}^n$ such that
$\ol{E}^t=OE^t$, i.e., $\bar{e}^t_i =\sum_{j=1}^n O_{ij}e^t_j$ for all $i$ and $t$.
In particular, $\ol{E}^t=E^t$ for all $t$ if $\ol{E}^0=E^0$.
\end{lemma}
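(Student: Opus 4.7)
The plan is to express the two frames in terms of each other as $\bar{e}_i^t = \sum_{j=1}^n A_{ij}(t) e_j^t$ for a smooth matrix-valued function $A(t)=(A_{ij}(t))$, and then to show, using the three defining properties in (\ref{eq:cano}) together with Lemma~\ref{lm:Lagsp}, that $A(t)$ is forced to be a constant orthogonal matrix.

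First I would verify that such an $A(t)$ is well defined. Since $\bar{e}_i^t, e_j^t \in \cJ_\alpha^t$ for every $t$, and since $(e_j^t)_{j=1}^n$ is a basis of the $n$-dimensional space $\cJ_\alpha^t$ (linear independence follows from pairing with $\omega$ and using $\omega(\dot{e}_i^t, e_j^t) = \delta_{ij}$), the coefficients $A_{ij}(t)$ are uniquely determined and depend smoothly on $t$. Differentiating the defining relation gives
\[ \dot{\bar{e}}_i^t = \sum_{j=1}^n \dot{A}_{ij}(t) e_j^t + \sum_{j=1}^n A_{ij}(t) \dot{e}_j^t, \]
and substituting into $\omega(\dot{\bar{e}}_i^t, \bar{e}_l^t) = \delta_{il}$, the $\omega(e_j^t, e_k^t)$ contributions vanish by Lemma~\ref{lm:Lagsp} while only the $\omega(\dot{e}_j^t, e_k^t) = \delta_{jk}$ terms survive. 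This yields $(A(t) A(t)^\top)_{il} = \delta_{il}$, so $A(t)$ is orthogonal at every $t$.

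To show that $A(t)$ is in fact constant, I would differentiate once more:
\[ \ddot{\bar{e}}_i^t = \sum_{j=1}^n \ddot{A}_{ij}(t) e_j^t + 2\sum_{j=1}^n \dot{A}_{ij}(t) \dot{e}_j^t + \sum_{j=1}^n A_{ij}(t) \ddot{e}_j^t. \]
By (\ref{eq:cano}) applied to both $\bar{E}^t$ and $E^t$, the vectors $\ddot{\bar{e}}_i^t$, $e_j^t$, and $\ddot{e}_j^t$ all lie in $\cJ_\alpha^t$, hence so does the middle sum $\sum_j \dot{A}_{ij}(t) \dot{e}_j^t$. Pairing this vector against $e_k^t$ under $\omega$: on the one hand the pairing vanishes by Lemma~\ref{lm:Lagsp}, and on the other hand the canonical frame relation gives $\sum_j \dot{A}_{ij}(t)\, \omega(\dot{e}_j^t, e_k^t) = \dot{A}_{ik}(t)$. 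Therefore $\dot{A}_{ik}(t) \equiv 0$ for all $i,k$, so $A(t) \equiv A$ is a constant orthogonal matrix, and the special case $\bar{E}^0=E^0$ forces $A=I$.

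I do not expect a serious obstacle beyond careful bookkeeping with the symplectic form. The one step worth emphasizing is the last one: the condition $\ddot{\bar{e}}_i^t \in \cJ^t_\alpha$ looks like a second-order constraint on $A$, but because the $\dot{e}_j^t$ direction is symplectically detected by pairing with $e_k^t$ (while $\cJ_\alpha^t$ itself is isotropic by Lemma~\ref{lm:Lagsp}), it collapses exactly to the first-order condition $\dot{A}\equiv 0$, which together with orthogonality at each $t$ closes the argument.
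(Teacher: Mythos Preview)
Your proof is correct and follows essentially the same approach as the paper's: you write $\ol{E}^t=A(t)E^t$, use $\omega(\dot{\ol{E}}^t,\ol{E}^t)=I_n$ together with Lemma~\ref{lm:Lagsp} to get orthogonality of $A(t)$, and then use $\ddot{\ol{E}}^t\in\cJ_\alpha^t$ (equivalently $\omega(\ddot{\ol{E}}^t,\ol{E}^t)=0$) together with Lemma~\ref{lm:Lagsp} to conclude $\dot{A}\equiv 0$. The paper carries this out in matrix notation rather than component-wise, but the argument is the same.
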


\begin{proof}
For each $t$, the matrix $O^t$ given by $\ol{E}^t=O^tE^t$ is orthogonal since
\[ I_n =\omega(\dot{\ol{E}^t},\ol{E}^t) =O^t \omega(\dot{E}^t,E^t) (O^t)^{\sT}
 =O^t (O^t)^{\sT}, \]
where we used Lemma~\ref{lm:Lagsp} and $O^{\sT}$ is the transpose of $O$.
It also follows from Lemma~\ref{lm:Lagsp} that
\[ 0=\omega(\ddot{\ol{E}^t},\ol{E}^t)
 =2\dot{O}^t \omega(\dot{E}^t,E^t) (O^t)^{-1}
 =2\dot{O}^t (O^t)^{-1}. \]
Therefore $\dot{O}^t \equiv 0$ and $O=O^0$ is our desired matrix.
$\qedd$
\end{proof}

\begin{definition}[Curvature operator]\label{df:Hcurv}
For each $t \in (-\ve,\ve)$, the \emph{curvature operator}
$\bR_{\alpha}^t:\cJ^t_{\alpha} \lra \cJ^t_{\alpha}$ is defined by
\[ \bR^t_{\alpha}(e^t_i) :=-\ddot{e}^t_i. \]
\end{definition}

Note that $\bR_{\alpha}^t$ is a linear operator and is independent of
the choice of the canonical frame $(e_i^t)_{i=1}^n$ thanks to Lemma~\ref{lm:O^t}.
In Appendix~\ref{ssc:coord}, we explicitly calculate the curvature operator
in coordinates along \cite{Le2}.
The following property of $\bR_{\alpha}^t$ shows that the definition of $\bR^t_{\alpha}$
can be reduced to the case of $t=0$.

\begin{lemma}{\rm (see \cite[(25)]{Le2})}\label{lm:t=0}
For any $t$, we have $\bR^0_{\bm{\alpha}(t)}=d\Phi_t \circ \bR^t_{\alpha} \circ (d\Phi_t)^{-1}$.
\end{lemma}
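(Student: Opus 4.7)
The strategy is to transport a canonical frame at $\alpha$ by $d\Phi_t$ and argue that the result is a canonical frame at $\bm{\alpha}(t)$, then compare second derivatives.

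Concretely, fix a canonical frame $(e_i^s)_{s \in (-\ve,\ve)}$ along $\bm{\alpha}$ and, for the fixed $t$ in the statement, define
\[
\tilde{e}_i^s := d\Phi_t(e_i^{t+s}) \in T_{\bm{\alpha}(t)}(T^*M), \qquad i=1,\ldots,n.
\]
Since $d\Phi_t:T_{\alpha}(T^*M) \lra T_{\bm{\alpha}(t)}(T^*M)$ is a fixed linear isomorphism (independent of $s$), differentiation in $s$ commutes with $d\Phi_t$, so $\dot{\tilde{e}}_i^s=d\Phi_t(\dot e_i^{t+s})$ and $\ddot{\tilde{e}}_i^s=d\Phi_t(\ddot e_i^{t+s})$.

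Next I would verify that $(\tilde{e}_i^s)_{i=1}^n$ is a canonical frame along the Hamiltonian flow emanating from $\bm{\alpha}(t)$. Using the group property $\Phi_s\circ \Phi_t=\Phi_{s+t}$ of the autonomous Hamiltonian flow, the Jacobi space at parameter $s$ based at $\bm{\alpha}(t)$ equals $(d\Phi_s)^{-1}(\cV_{\bm{\alpha}(s+t)})$. Since $e_i^{t+s}\in\cJ_{\alpha}^{t+s}=(d\Phi_{t+s})^{-1}(\cV_{\bm{\alpha}(t+s)})$, applying $d\Phi_t$ and then $d\Phi_s$ lands in $\cV_{\bm{\alpha}(t+s)}$, hence $\tilde{e}_i^s$ belongs to the Jacobi space at $\bm{\alpha}(t)$. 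The symplectic invariance \eqref{eq:omega} then gives
\[
\omega(\dot{\tilde{e}}_i^s,\tilde{e}_j^s)
 =\omega\bigl(d\Phi_t(\dot e_i^{t+s}),d\Phi_t(e_j^{t+s})\bigr)
 =\omega(\dot e_i^{t+s},e_j^{t+s})=\delta_{ij},
\]
and the same argument as for the Jacobi-space inclusion above, applied to $\ddot e_i^{t+s}\in\cJ_\alpha^{t+s}$, shows $\ddot{\tilde{e}}_i^s$ lies in the Jacobi space at $\bm{\alpha}(t)$. Both defining properties of Definition~\ref{df:cano} are thus satisfied.

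With this in hand, Definition~\ref{df:Hcurv} evaluated at the parameter $s=0$ of the shifted frame yields
\[
\bR^0_{\bm{\alpha}(t)}(\tilde{e}_i^0)
 =-\ddot{\tilde{e}}_i^0
 =-d\Phi_t(\ddot e_i^t)
 =d\Phi_t\bigl(\bR^t_\alpha(e_i^t)\bigr)
 =d\Phi_t\circ \bR^t_\alpha\circ (d\Phi_t)^{-1}(\tilde{e}_i^0),
\]
since $\tilde{e}_i^0=d\Phi_t(e_i^t)$. Because $(\tilde{e}_i^0)_{i=1}^n$ is a basis of $\cJ^0_{\bm{\alpha}(t)}=\cV_{\bm{\alpha}(t)}$ and both sides are linear operators on this space, the desired identity follows on all of $\cJ^0_{\bm{\alpha}(t)}$.

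The only delicate step is the bookkeeping in the second paragraph: one must use the autonomy of $\bH$ (hence the flow property $\Phi_s\circ\Phi_t=\Phi_{s+t}$) to identify the Jacobi spaces of the shifted frame correctly, and observe that commuting $\partial_s$ with the fixed linear map $d\Phi_t$ is legitimate because $d\Phi_t$ does not depend on $s$. Everything else is a direct application of the symplectic invariance \eqref{eq:omega} and Definitions~\ref{df:cano} and \ref{df:Hcurv}; no appeal to the uniqueness statement of Lemma~\ref{lm:O^t} is actually needed here, since we merely produced one particular canonical frame at $\bm{\alpha}(t)$ for which the identity manifestly holds.
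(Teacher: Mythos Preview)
Your proof is correct and follows essentially the same approach as the paper's: both define $\tilde{e}_i^s := d\Phi_t(e_i^{t+s})$, check that this is a canonical frame along $\bm{\alpha}(t+s)$ via the flow property $\Phi_s\circ\Phi_t=\Phi_{s+t}$ and symplectic invariance, and then read off the identity from the second derivatives at $s=0$. The paper is simply terser, asserting the canonical-frame property in one line without writing out the verification of the two conditions in Definition~\ref{df:cano} as you did.
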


\begin{proof}
Fix $t$ and observe that
\[ \tilde{e}_i^s :=d\Phi_t(e_i^{t+s}) \in d\Phi_t(\cJ_{\alpha}^{t+s})
 =(d\Phi_s)^{-1} (\cV_{\bm{\alpha}(t+s)}) =\cJ^s_{\bm{\alpha}(t)} \]
gives a canonical frame along $\bm{\alpha}(t+s)$.
Thus we obtain
\[ \bR^0_{\bm{\alpha}(t)}(\tilde{e}_i^0) =-\ddot{\tilde{e}}^0_i =-d\Phi_t(\ddot{e}_i^t)
 =d\Phi_t \big( \bR_{\alpha}^t (e^t_i) \big)
 =d\Phi_t \circ \bR^t_{\alpha} \circ (d\Phi_t)^{-1} (\tilde{e}^0_i). \]
$\qedd$
\end{proof}

\begin{lemma}\label{lm:symm}
The curvature operator $\bR^t_{\alpha}$ is symmetric in the sense that
\[ \left\langle d\Phi_t(e^t_i),d\Phi_t\big( \bR^t_{\alpha}(e^t_j) \big) \right\rangle_{\bm{\alpha}(t)}
 =\left\langle d\Phi_t\big( \bR^t_{\alpha}(e^t_i) \big),d\Phi_t(e^t_j) \right\rangle_{\bm{\alpha}(t)} \]
for all $i,j$ and $t$.
\end{lemma}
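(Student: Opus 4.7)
The plan is to reduce the symmetry of $\bR^t_\alpha$ to the symmetry of its matrix representation in the canonical frame, and then to derive that matrix symmetry from Lemma~\ref{lm:Lagsp} together with successive differentiations of the defining relation $\omega(\dot{e}^t_i,e^t_j)=\delta_{ij}$. Write $\bR^t_\alpha(e^t_j)=\sum_k R^t_{jk}e^t_k$, which makes sense because $\bR^t_\alpha$ is a linear operator on $\cJ^t_\alpha$ and $(e^t_k)_{k=1}^n$ spans this space. Using the remark following Definition~\ref{df:cano}, namely $\omega(\dot{e}^t_i,e^t_k)=\langle d\Phi_t(e^t_i),d\Phi_t(e^t_k)\rangle_{\bm\alpha(t)}=\delta_{ik}$, bilinearity gives
\[
\langle d\Phi_t(e^t_i),d\Phi_t(\bR^t_\alpha(e^t_j))\rangle_{\bm\alpha(t)}=R^t_{ji}, \qquad
\langle d\Phi_t(\bR^t_\alpha(e^t_i)),d\Phi_t(e^t_j)\rangle_{\bm\alpha(t)}=R^t_{ij},
\]
so the claim becomes $R^t_{ij}=R^t_{ji}$.

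To prove this, first observe that from $\bR^t_\alpha(e^t_j)=-\ddot{e}^t_j$ one obtains the formula $R^t_{ji}=\omega(\dot{e}^t_i,\bR^t_\alpha(e^t_j))=-\omega(\dot{e}^t_i,\ddot{e}^t_j)$, where the first equality uses Lemma~\ref{lm:Lagsp} to expand $\omega(\dot{e}^t_i,\sum_k R^t_{jk}e^t_k)=R^t_{ji}$. Hence the symmetry $R^t_{ij}=R^t_{ji}$ is equivalent to $\omega(\dot{e}^t_i,\ddot{e}^t_j)=\omega(\dot{e}^t_j,\ddot{e}^t_i)$.

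To establish this equality I would exploit that both $e^t_j$ and $\ddot{e}^t_j$ live in the Lagrangian subspace $\cJ^t_\alpha$, so Lemma~\ref{lm:Lagsp} yields $\omega(e^t_j,\ddot{e}^t_i)=0$ identically in $t$. Differentiating $\omega(\dot{e}^t_i,e^t_j)=\delta_{ij}$ in $t$ produces $\omega(\ddot{e}^t_i,e^t_j)+\omega(\dot{e}^t_i,\dot{e}^t_j)=0$, and the vanishing $\omega(\ddot{e}^t_i,e^t_j)=0$ forces $\omega(\dot{e}^t_i,\dot{e}^t_j)=0$ for all $i,j,t$. Differentiating this last identity once more gives $\omega(\ddot{e}^t_i,\dot{e}^t_j)+\omega(\dot{e}^t_i,\ddot{e}^t_j)=0$, which by the antisymmetry of $\omega$ rearranges to $\omega(\dot{e}^t_i,\ddot{e}^t_j)=\omega(\dot{e}^t_j,\ddot{e}^t_i)$, completing the argument.

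The step I expect to be slightly delicate, more conceptually than technically, is confirming that the inner-product identity $\omega(\dot{e}^t_i,e^t_j)=\langle d\Phi_t(e^t_i),d\Phi_t(e^t_j)\rangle_{\bm\alpha(t)}$ extends by bilinearity to one of the arguments being $\bR^t_\alpha(e^t_j)$; this is justified by expressing $\bR^t_\alpha(e^t_j)$ in the canonical frame and invoking the existence of a smooth curve (namely $s\mapsto d\Phi_{t+s}(e^{t+s}_k)$) representing $d\Phi_t(e^t_k)\in\cV_{\bm\alpha(t)}$ in the sense of Lemma~\ref{lm:quad}. Once this reduction is in place, the remainder is a transparent two-fold differentiation of the canonical frame relation combined with Lemma~\ref{lm:Lagsp}.
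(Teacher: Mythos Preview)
Your argument is correct and follows essentially the same route as the paper: both reduce the symmetry to the identity $\omega(\dot{e}^t_i,\ddot{e}^t_j)=\omega(\dot{e}^t_j,\ddot{e}^t_i)$, obtained by differentiating $\omega(\dot{e}^t_i,e^t_j)=\delta_{ij}$ and using Lemma~\ref{lm:Lagsp} to get $\omega(\dot{e}^t_i,\dot{e}^t_j)\equiv 0$, then differentiating once more. The only cosmetic difference is that the paper unpacks $\langle e^0_i,\ddot{e}^0_j\rangle_\alpha$ directly from Lemma~\ref{lm:quad} (which introduces a $\dddot{e}^0_j$ term), while you extract the matrix entries $R^t_{ji}$ via orthonormality of the frame; one small slip in your write-up is that the expansion $\omega(\dot{e}^t_i,\sum_k R^t_{jk}e^t_k)=R^t_{ji}$ uses the canonical-frame condition $\omega(\dot{e}^t_i,e^t_k)=\delta_{ik}$, not Lemma~\ref{lm:Lagsp}.
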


\begin{proof}
By Lemma~\ref{lm:t=0}, it suffices to see the claim at $t=0$.
We deduce from Lemma~\ref{lm:Lagsp} that $\omega(e^t_i,\ddot{e}^t_j) \equiv 0$,
and then $\omega(\dot{e}^t_i,\dot{e}^t_j) \equiv 0$ since
$\omega(e^t_i,\dot{e}^t_j) \equiv -\delta_{ij}$.
Thus we have
\[ \langle e^0_i,\ddot{e}^0_j \rangle_{\alpha}
 =\frac{1}{2}\{ \omega(\dot{e}^0_i,\ddot{e}^0_j) +\omega(\dddot{e}^0_j,e^0_i) \}
 =\frac{1}{2}\{ \omega(\dot{e}^0_i,\ddot{e}^0_j) -\omega(\ddot{e}^0_j,\dot{e}^0_i) \}
 =\omega(\dot{e}^0_i,\ddot{e}^0_j), \]
and
$\omega(\dot{e}^0_i,\ddot{e}^0_j)=-\omega(\ddot{e}^0_i,\dot{e}^0_j)
 =\omega(\dot{e}^0_j,\ddot{e}^0_i)$ shows the claim.
$\qedd$
\end{proof}

\begin{definition}[Ricci curvature]\label{df:HRic}
Define the \emph{Ricci curvature} $\Ric^{\bH}(\alpha) \in \R$ as the trace of
$\bR^0_{\alpha}:\cV_{\alpha} \lra \cV_{\alpha}$ with respect to the inner product
$\langle \cdot,\cdot \rangle_{\alpha}$ given in Lemma~\ref{lm:quad}.
We also set $\Ric^{\bH}:=0$ on $0_{T^*M}$.
\end{definition}

We remark that setting $\Ric^{\bH}(0)=0$ is reasonable since then $\bm{\alpha}$ is constant.
The weighted version can be introduced similarly to the Finsler case as follows
(recall Definition~\ref{df:FRic}).

\begin{definition}[Weighted Ricci curvature]\label{df:wHRic}
We fix a positive $\cC^{\infty}$-measure $\fm$ on $M$.
Along the action-minimizing curve $\eta(t):=\pi_M(\bm{\alpha}(t)) \in M$
($\pi_M:T^*M \lra M$ is the canonical projection),
decompose $\fm$ as $\fm=e^{-\psi}\vol_{\dot{\eta}}$,
where $\vol_{\dot{\eta}}$ is the volume form of $\bL_{v^i v^j}(\dot{\eta})$ given by
\[ \vol_{\dot{\eta}}= \sqrt{\det[\bL_{v^i v^j}(\dot{\eta})]}\, dx^1 \cdots dx^n\]
on $\eta$.
Then the \emph{weighted Ricci curvature} is defined by
\[ \Ric^{\bH}_N(\alpha)
 :=\Ric^{\bH}(\alpha)+(\psi \circ \eta)''(0)-\frac{(\psi \circ \eta)'(0)^2}{N-n} \]
for $N \in (n,\infty)$, and
\begin{align*}
\Ric^{\bH}_{\infty}(v) &:=\Ric^{\bH}(v) +(\psi \circ \eta)''(0), \\
\Ric^{\bH}_n(v) &:=\left\{
 \begin{array}{ll} \Ric^{\bH}(v)+(\psi \circ \eta)''(0) & \text{if}\ (\psi \circ \eta)'(0)=0, \\
 -\infty & \text{if}\ (\psi \circ \eta)'(0) \neq 0. \end{array} \right.
\end{align*}
We also set $\Ric^{\bH}_N :=0$ on $0_{T^*M}$ for all $N \in [n,\infty]$.
\end{definition}

\section{Laplacian}\label{sc:Lap}

In this section, we introduce the natural nonlinear Laplacian $\Lap^{\bH}_{\fm}$
associated with the Hamiltonian $\bH$ and the reference measure $\fm$ on $M$
in a similar way to the Finsler case (see \cite{OShf}).
It will turn out that $\Lap^{\bH}_{\fm}$ coincides with the Laplacian $\Lap^{\bH}$
studied in \cite{Le2} (see also \cite{ALbl} for the sub-Riemannian case)
up to a difference depending on $\fm$.

\subsection{Gradient vectors, Laplacian and Hessian}\label{ssc:Lap}

For a differentiable function $u$ on $M$, we call
\[ \Nabla u(x):=\btau^* (du_x) =\sum_{i=1}^n \bH_{\alpha_i}(du_x) \del_{x^i} \in T_xM \]
the \emph{gradient vector} of $u$ at $x \in M$.
For an open set $U \subset M$, we will use two kinds of \emph{Sobolev spaces}:
\begin{align*}
H^1(U;\bL) &:= \{ u \in L^2(U;\fm) \,|\, \text{weakly differentiable},\ \bL(\Nabla u) \in L^1(U;\fm) \}, \\
H^1(U;\bH) &:= \{ u \in L^2(U;\fm) \,|\, \text{weakly differentiable},\ \bH(du) \in L^1(U;\fm) \}.
\end{align*}
Clearly they coincide in the Riemannian or Finsler case.
We also introduce $H^1_{\loc}(U;\bL)$ and $H^1_{\loc}(U;\bH)$ similarly.
Note that $\cC^{\infty}_c(U) \subset H^1(U;\bL) \cap H^1(U;\bH)$
and $\cC^{\infty}(U) \subset H^1_{\loc}(U;\bL) \cap H^1_{\loc}(U;\bH)$.
We remark that $H^1(U;\bL)$ is not necessarily a linear space because of
the non-homogeneity of $\bL$
(same for $H^1(U;\bH)$, $H^1_{\loc}(U;\bL)$ and $H^1_{\loc}(U;\bH)$).
We know only that $H^1(U;\bH)$ (resp.\ $H^1_{\loc}(U;\bH)$)
is a convex subset of $L^2(U;\fm)$ (resp.\ $L^2_{\loc}(U;\fm)$) by the convexity of $\bH$.

Associated with $\fm$, define the \emph{divergence}
of a (weakly) differentiable vector field $V$ on $M$ with $\bL(V) \in L^1_{\loc}(M;\fm)$
in the weak form by
\[ \int_M \phi \div_{\fm} V \,d\fm =-\int_M d\phi(V) \,d\fm
 \quad \text{for all}\ \phi \in \cC^{\infty}_c(M). \]
Note that the right-hand side is well-defined since
\[ |d\phi(V)| \le \max\{\bH(d\phi),\bH(-d\phi)\} +\bL(V). \]
If $V$ is differentiable, then we can write down in coordinates as
\[ \div_{\fm}V =\sum_{i=1}^n \left\{ \frac{\del V^i}{\del x^i}
 -V^i \frac{\del \varsigma}{\del x^i} \right\}, \]
where $V=\sum_{i=1}^n V^i \del_{x^i}$ and $\fm=e^{-\varsigma}dx^1 \cdots dx^n$.

Now we define the (distributional, weighted) \emph{Laplacian} $\Lap^{\bH}_{\fm}$
acting on functions $u \in H^1_{\loc}(M;\bL)$
by $\Lap^{\bH}_{\fm} u:=\div_{\fm}(\Nabla u)$, namely
\[ \int_M \phi \Lap^{\bH}_{\fm} u \,d\fm =-\int_M d\phi(\Nabla u) \,d\fm
 \quad \text{for all}\ \phi \in \cC^{\infty}_c(M). \]
Note that our Laplacian is a negative operator in the sense that
\[ \int_M u \Lap^{\bH}_{\fm} u \,d\fm
 =-\int_M du(\Nabla u) \,d\fm =-\int_M \{ \bH(du)+\bL(\Nabla u) \} \,d\fm \le 0 \]
for all $u \in \cC^{\infty}_c(M)$ and equality holds if and only if $u$ is constant.
We remark that, even if $u \in \cC^{\infty}(M)$,
$\Nabla u$ may not be differentiable at points where $\Nabla u$ vanishes (Remark~\ref{rm:Legt}(a)).
On the set where $\Nabla u \neq 0$, we can calculate
\begin{align}
\Lap^{\bH}_{\fm} u &= \div_{\fm}\left( \sum_{i=1}^n \bH_{\alpha_i}(du) \del_{x^i} \right)
 \nonumber\\
&= \sum_{i=1}^n \left\{ \bH_{\alpha_i x^i}(du)
 +\sum_{j=1}^n \bH_{\alpha_i \alpha_j}(du) \frac{\del^2 u}{\del x^i \del x^j}
 -\bH_{\alpha_i}(du) \frac{\del \varsigma}{\del x^i} \right\}. \label{eq:Lapu}
\end{align}

The Laplacian studied in \cite{Le2} can be regarded as an unweighted version of $\Lap^{\bH}_{\fm}$.
Let us recall the definition in \cite[\S 4]{Le2}.
Given $u \in \cC^{\infty}(M)$, the image of the derivative
$d(du)_x:T_xM \lra T_{du_x}(T^*M)$ of $du:M \lra T^*M$ is
\[ P_x =\mathrm{span}\left\{ \del_{x^i} +\sum_{j=1}^n \frac{\del^2 u}{\del x^i \del x^j}(x) \del_{\alpha_j}
 \,\bigg|\, i=1,\ldots,n \right\} \]
(which is a Lagrangian subspace with respect to $\omega$).
We fix $x \in M$ with $du_x \neq 0$ and shall identify $P_x$ with the graph of a linear map
via a canonical frame $(e^t_i)_{i=1}^n$ along $\bm{\alpha}(t):=\Phi_t(du_x)$.
To be precise, we decompose as $T_{du_x}(T^*M)=\cV_{du_x} \times \cH_{du_x}$,
where
\[ \cH_{du_x}:=\mathrm{span}\{ \dot{e}_i^0 \,|\, i=1,\ldots,n \} \]
and is independent of the choice of a canonical frame by Lemma~\ref{lm:O^t}.
Choose a coordinate around $x$ such that $\bH_{\alpha_i \alpha_j}(du_x)=\delta_{ij}$ for all $i,j$,
and take the canonical frame $(e^t_i)_{i=1}^n$ with $e_i^0=\del_{\alpha_i}$.
(We prefer this coordinate than a simpler one with
$\bH_{\alpha_i \alpha_j} \circ \bm{\alpha} \equiv \delta_{ij}$
for the sake of visibility of the structure of the calculation.)
Then we have, by Proposition~\ref{pr:O^t}, \eqref{eq:e'} and \eqref{eq:Omega},
\begin{align*}
\dot{e}^0_i &= \sum_{j=1}^n (O^t_{ij} \bar{e}^t_j)'|_{t=0}
 =\frac{1}{2} \sum_{j=1}^n \Omega^0_{ij} \del_{\alpha_j} +\dot{\bar{e}}^0_i \\
&= -\del_{x^i}
 +\frac{1}{2}\sum_{j=1}^n \{ (\bH_{x^i \alpha_j} +\bH_{x^j \alpha_i})(du_x)
 +\dot{\xi}^i_j(0) +\dot{\xi}^j_i(0) \} \del_{\alpha_j}.
\end{align*}
Since $\sum_{k,l=1}^n \bH_{\alpha_k \alpha_l}(\bm{\alpha}(t)) \xi_i^k(t) \xi_j^l(t)=\delta_{ij}$
for all $t$ and $\bH_{\alpha_k \alpha_l}(du_x)=\delta_{kl}$,
we have
\[ \dot{\xi}^i_j(0) +\dot{\xi}^j_i(0)+[\bH_{\alpha_i \alpha_j} \circ \bm{\alpha}]'(0)=0. \]
Hence
\[ \dot{e}^0_i =-\del_{x^i}
 +\frac{1}{2}\sum_{j=1}^n \{ (\bH_{x^i \alpha_j} +\bH_{x^j \alpha_i})(du_x)
 -[\bH_{\alpha_i \alpha_j} \circ \bm{\alpha}]'(0) \} \del_{\alpha_j}, \]
so that $P_x$ is the graph of the linear map sending $-\dot{e}^0_i \in \cH_{du_x}$ to
\begin{equation}\label{eq:HHess}
\frac{1}{2} \sum_{j=1}^n \left\{ (\bH_{x^i \alpha_j}
 +\bH_{x^j \alpha_i})(du_x) -[\bH_{\alpha_i \alpha_j} \circ \bm{\alpha}]'(0)
 +2\frac{\del^2 u}{\del x^i \del x^j}(x) \right\} \del_{\alpha_j}.
\end{equation}
The negative of this map is called the \emph{Hessian}
$\Hess^{\bH}\! u(x):\cH_{du_x} \lra \cV_{du_x}$ of $u$ at $x$, and its trace
\begin{equation}\label{eq:LapH}
\Lap^{\bH} u(x) :=\sum_{i=1} \left\{ \bH_{x^i \alpha_i}(du_x)
 -\frac{1}{2}[\bH_{\alpha_i \alpha_i} \circ \bm{\alpha}]'(0)
 +\frac{\del^2 u}{(\del x^i)^2}(x) \right\}
\end{equation}
with respect to the canonical frame is called the Laplacian in \cite{Le2}.

To compare $\Lap^{\bH}_{\fm}u$ with $\Lap^{\bH}u$, similarly to Definition~\ref{df:wHRic},
let us decompose $\fm$ along the action-minimizing curve
$\eta(t):=\pi_M(\bm{\alpha}(t))$ as
\[ \fm=e^{-\psi} \sqrt{\det[\bL_{v^i v^j}(\dot{\eta})]} \,dx^1 \cdots dx^n. \]
Then we observe from \eqref{eq:Lapu} that,
since $\sum_{i=1}^n \bH_{\alpha_i}(du_x)\del_{x^i} =\dot{\eta}(0)$,
\[ \Lap^{\bH}_{\fm} u(x) =\sum_{i=1}^n
 \left\{ \bH_{\alpha_i x^i}(du_x) +\frac{\del^2 u}{(\del x^i)^2}(x) \right\}
 -\left[ \psi \circ \eta -\frac{1}{2} \log(\det [\bL_{v^i v^j} \circ \dot{\eta}]) \right]'(0). \]
Note that
\begin{align*}
\Big[\! \log(\det[\bL_{v^i v^j} \circ \dot{\eta}]) \Big]'(0)
&=\Big[\! \det[\bL_{v^i v^j} \circ \dot{\eta}] \Big]'(0)
 =-\Big[\! \det[\bH_{\alpha_i \alpha_j} \circ \bm{\alpha}] \Big]'(0) \\
&= -\sum_{i=1}^n [\bH_{\alpha_i \alpha_i} \circ \bm{\alpha}]'(0).
\end{align*}
Therefore we have
\begin{equation}\label{eq:LapHm}
\Lap^{\bH}_{\fm} u(x) =\Lap^{\bH} u(x) -(\psi \circ \eta)'(0),
\end{equation}
so that $\Lap^{\bH}_{\fm}u$ and $\Lap^{\bH}u$ coincide
up to the term depending on the weight function $\psi$.
In other words, $\Lap^{\bH}$ can be interpreted as the weighted Laplacian $\Lap^{\bH}_{\fm}$
with respect to a measure $\fm$ such that $\psi$ is constant along any action-minimizing curve $\eta$,
whereas such a measure does not necessarily exist even on a Finsler manifold (Remark~\ref{rm:wRic}).

\begin{remark}\label{rm:Hess}
It can be checked by hand that the above Hessian coincides with $\Nabla^2 u \in T^*M \otimes TM$
used in \cite{OSbw} to study the Bochner--Weitzenb\"ock formula in the Finsler setting.
To see this, recall from \cite[Lemma~2.3]{OSbw} that
\[ \Nabla^2 u (\del_{x^i}|_x) =\sum_{j=1}^n
 \left\{ \frac{\del^2 u}{\del x^i \del x^j}
 -\sum_{k=1}^n \Gamma^k_{ij}(\Nabla u) \frac{\del u}{\del x^k} \right\}(x) \del_{x^j}|_x, \]
where $g_{ij}(\Nabla u(x))=\delta_{ij}$ is still assumed.
We calculate by using the notations in \cite{OSbw} as, at $x$,
\begin{align*}
&\sum_{k=1}^n \Gamma^k_{ij} (\Nabla u) \frac{\del u}{\del x^k} \\
&= \frac{1}{2} \sum_{k=1}^n \left\{ \left(
 \frac{\del g_{jk}}{\del x^i} +\frac{\del g_{ik}}{\del x^j} -\frac{\del g_{ij}}{\del x^k} \right)
 +\sum_{l=1}^n \frac{\del g_{ij}}{\del v^l} N^l_k \right\} (\Nabla u)
 \frac{\del u}{\del x^k} \\
&= -\frac{1}{2} \sum_{k=1}^n \left(
 \frac{\del g^{jk}}{\del x^i} +\frac{\del g^{ik}}{\del x^j} -\frac{\del g^{ij}}{\del x^k} \right)
 (du) \frac{\del u}{\del x^k}
 +\frac{1}{2}\sum_{l=1}^n \frac{\del g_{ij}}{\del v^l} (\Nabla u) G^l (\Nabla u),
\end{align*}
where $(g^{ij}(du))$ is the inverse matrix of $(g_{ij}(\Nabla u))$.
It follows from the homogeneity that
\[ \bH_{x^i \alpha_j}(du_x)
 =\sum_{k=1}^n \bH_{x^i \alpha_j \alpha_k}(du_x) \frac{\del u}{\del x^k}(x)
 =\sum_{k=1}^n \frac{\del g^{jk}}{\del x^i}(du_x) \frac{\del u}{\del x^k}(x). \]
Moreover, since $\dot{\eta}(0)=\Nabla u(x)$, we have
\[ [\bH_{\alpha_i \alpha_j} \circ \bm{\alpha}]'(0)
 =\sum_{k=1}^n \left\{ \frac{\del g^{ij}}{\del x^k}(du_x) \frac{\del u}{\del x^k}(x)
 -\frac{\del g_{ij}}{\del v^k}\big( \dot{\eta}(0) \big) \ddot{\eta}^k(0) \right\}. \]
Consequently, the geodesic equation
$\ddot{\eta}^k+G^k(\dot{\eta}) \equiv 0$ shows that
\[ \Nabla^2 u (\del_{x^i}|_x) =\frac{1}{2} \sum_{j=1}^n \left\{
 2\frac{\del^2 u}{\del x^i \del x^j}(x) +(\bH_{x^i \alpha_j} +\bH_{x^j \alpha_i})(du_x)
 -[\bH_{\alpha_i \alpha_j} \circ \bm{\alpha}]'(0) \right\} \del_{x^j}|_x. \]
Compare this with \eqref{eq:HHess}.
\end{remark}

\subsection{Energy and harmonic functions}\label{ssc:harm}

Let $U \subset M$ be an open set.
Define the \emph{energy functional} $\cE_U:H^1_{\loc}(U;\bH) \lra [0,\infty]$ by
\[ \cE_U(u):=\int_U \bH(du) \,d\fm. \]

\begin{lemma}\label{lm:lsc}
The energy functional $\cE_U$ is lower semi-continuous on $L^2(U;\fm)$.
Namely, for any sequence $\{u_i\}_{i \in \N} \subset L^2(U;\fm) \cap H^1_{\loc}(U;\bH)$
converging to $u \in L^2(U;\fm) \cap H^1_{\loc}(U;\bH)$ in $L^2(U;\fm)$, it holds
\[ \cE_U(u) \le \liminf_{i \to \infty} \cE_U(u_i). \]
\end{lemma}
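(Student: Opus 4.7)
\emph{Plan.} The natural approach is to exploit the convexity of $\bH$ via Legendre duality, representing $\cE_U$ as a supremum of continuous affine functionals on $L^2(U;\fm)$; lower semi-continuity then follows automatically. For each $V \in \cC^\infty_c(U; TM)$, define
\[ \Lambda_V(u) := \int_U \{ du(V) - \bL(V) \} \, d\fm. \]
Fenchel's inequality $\bH(du(x)) \ge du(V(x)) - \bL(V(x))$, which is built into the very definition of $\bH$, gives $\Lambda_V(u) \le \cE_U(u)$ for every $u \in H^1_\loc(U;\bH)$. On the other hand, the weak definition of $\div_{\fm}$ together with the compact support of $V$ yields
\[ \Lambda_V(u) = -\int_U u \, \div_{\fm} V \, d\fm - \int_U \bL(V) \, d\fm, \]
which is affine in $u$ and continuous on $L^2(U;\fm)$ because $\div_{\fm} V$ is bounded with compact support. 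Applying this inequality along the sequence and using $L^2$-continuity of $\Lambda_V$ gives
\[ \Lambda_V(u) = \lim_{i \to \infty} \Lambda_V(u_i) \le \liminf_{i \to \infty} \cE_U(u_i) \]
for every admissible $V$, whence $\sup_V \Lambda_V(u) \le \liminf_{i \to \infty} \cE_U(u_i)$.

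What remains is the duality identity $\sup_V \Lambda_V(u) = \cE_U(u)$. Pointwise, Fenchel equality is attained at $v = \btau^*(du(x))$, so the task reduces to approximating the measurable vector field $\btau^*(du)$ by members of $\cC^\infty_c(U;TM)$ in such a way that both $\int_U du(V_j) \, d\fm$ and $\int_U \bL(V_j) \, d\fm$ converge to the correct limits. I would proceed by exhausting $U$ with compact sets, truncating to the sublevel sets where $\bL(\btau^*(du))$ is bounded (so that the truncated field ranges in a uniformly controlled region of $TM$), mollifying in coordinate charts, and patching by a partition of unity; on each truncated region, Fenchel equality turns $\Lambda_{V_j}(u)$ into the integral of $\bH(du)$ over the retained set, and monotone convergence (legitimate since $\bH(du) \ge 0$) drives this up to $\cE_U(u)$ as the truncations are relaxed.

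The main obstacle is exactly this final approximation step. The pointwise maximizer $\btau^*(du)$ is only measurable, with no a priori global or even local integrability beyond what the Fenchel identity $\bL(\btau^*(du)) = du(\btau^*(du)) - \bH(du)$ provides on truncated regions, and the non-homogeneity of $\bL$ near the zero section (Remark~\ref{rm:Lag}(c), Remark~\ref{rm:Legt}(a)) prevents any naive dilation or density argument. The truncation-plus-mollification recipe must therefore be arranged so that $\bL(V_j)$ and $du(V_j)$ remain simultaneously controlled, which is precisely what choosing the truncation according to sublevel sets of $\bL(\btau^*(du))$ (rather than, say, $|du|_g$) accomplishes.
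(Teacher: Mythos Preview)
Your duality approach is correct and is a genuinely different route from the paper's proof. The paper simply localizes: given $\ve>0$ it passes to a compact $W\subset U$ with $\cE_W(u)\ge\cE_U(u)-\ve$, covers $W$ by finitely many coordinate balls $U_k$, and on each $U_k$ invokes Serrin's classical lower semi-continuity theorem for convex variational integrals \cite{Se} (see also \cite{FGM}) as a black box, then sums and lets $\ve\to 0$.

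What you are doing---writing $\cE_U$ as $\sup_V \Lambda_V$ over compactly supported smooth vector fields via Fenchel duality---is essentially the mechanism that proves Serrin's theorem in the first place, so you are reproving the cited result rather than quoting it. This buys self-containedness and makes the role of the convexity of $\bH$ explicit, at the cost of having to carry out the approximation step you correctly flag as the main obstacle. That step is genuinely doable along the lines you sketch (truncate $\btau^*(du)$ to the set where $\bL(\btau^*(du))\le k$ intersected with a compact exhaustion, so the field is bounded there by super-linearity; mollify and use Jensen's inequality $\bL(V_j)\le (\bL(V))_j$ to keep $\int \bL(V_j)$ under control, while $\int du(V_j)\to\int du(V)$ since $du\in L^1_{\loc}$ by the super-linearity of $\bH$), but it is several paragraphs of honest work. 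The paper's version trades that work for a citation.
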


\begin{proof}
Given $\ve>0$, let $W \subset U$ be a compact set such that
$\cE_W(u) \ge \cE_U(u)-\ve$ (including the case where both energies are infinite).
We cover $W$ with finitely many, mutually disjoint open sets $\{U_k\}$
(up to an $\fm$-negligible set) such that each $U_k$ is diffeomorphic to the unit ball in $\R^n$.
Then Serrin's classical theorem (\cite{Se}, see also \cite{FGM}) is applicable
on each $U_k$ to obtain
\[ \cE_W(u) =\sum_k \cE_{U_k}(u)
 \le \sum_k \liminf_{i \to \infty} \cE_{U_k}(u_i) \le \liminf_{i \to \infty} \cE_U(u_i). \]
We complete the proof by letting $\ve \to 0$.
$\qedd$
\end{proof}

We say that $u \in H^1_{\loc}(U;\bL)$ is \emph{weakly harmonic} on $U$
if $\Lap^{\bH}_{\fm} u \equiv 0$ on $U$ in the weak sense, that is,
\[ \int_U d\phi(\Nabla u) \,d\fm =0 \qquad \text{for all}\ \phi \in \cC_c^{\infty}(U). \]

\begin{lemma}\label{lm:wharm}
Suppose that $u+\phi \in H^1(U';\bH)$ holds for any open subset $U' \subset U$,
$u \in H^1(U';\bH)$ and $\phi \in \cC^{\infty}_c(U')$.
Then a function $u \in H^1(U;\bH) \cap H^1_{\loc}(U;\bL)$ is weakly harmonic on $U$ if and only if
\[ \cE_U(u) =\inf\{ \cE_U(u+\phi) \,|\, \phi \in \cC_c^{\infty}(U) \}. \]
Similarly, $u \in H^1_{\loc}(U;\bH) \cap H^1_{\loc}(U;\bL)$
is weakly harmonic on $U$ if and only if,
for any relatively compact open set $U' \subset U$,
\[ \cE_{U'}(u) =\inf\{ \cE_{U'}(u+\phi) \,|\, \phi \in \cC_c^{\infty}(U') \}. \]
\end{lemma}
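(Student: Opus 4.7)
The plan is to deduce the equivalence from the convexity of $\bH$ on each cotangent fiber, which makes $\cE_U$ a convex functional on the affine class $u + \cC_c^{\infty}(U)$. The lemma is then the standard first-variation principle identifying minimizers of a convex functional with its critical points (the weakly harmonic functions in our notation), and the local version will follow by applying the global equivalence on each relatively compact open $U' \subset U$, since weak harmonicity is a local property.

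For $(\Rightarrow)$, assuming $u$ is weakly harmonic, I would use the tangent hyperplane inequality for the convex $\cC^1$ function $\bH$,
\[ \bH(du_x + d\phi_x) \ge \bH(du_x) + \sum_{i=1}^n \bH_{\alpha_i}(du_x)\, \frac{\del \phi}{\del x^i}(x) = \bH(du_x) + d\phi(\Nabla u)(x), \]
which remains valid at points where $du_x = 0$: there $\bH_{\alpha_i}(0) = 0$ (since $\bH \ge 0$ is minimized at the zero section) and $\Nabla u = \btau^*(du) = 0$, reducing the inequality to the trivial $\bH(d\phi_x) \ge 0$. Integrating against $\fm$ and using weak harmonicity to kill $\int_U d\phi(\Nabla u)\,d\fm$ yields $\cE_U(u+\phi) \ge \cE_U(u)$ for every $\phi \in \cC_c^{\infty}(U)$.

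For $(\Leftarrow)$, I would set $F(t) := \cE_U(u + t\phi)$ for fixed $\phi \in \cC_c^{\infty}(U)$; the hypothesis on $H^1(U;\bH)$ ensures $u + t\phi \in H^1(U;\bH)$ for every $t \in \R$, and the convex combination $du + t\,d\phi = \tfrac{T+t}{2T}(du+T\,d\phi) + \tfrac{T-t}{2T}(du-T\,d\phi)$ bounds $\bH(du+t\,d\phi)$ above by the $L^1$ function $\bH(du+T\,d\phi)+\bH(du-T\,d\phi)$ for $|t| \le T$, so $F$ is a finite convex function on $\R$ with minimum at $t = 0$. Convexity of $\bH$ makes the difference quotient $[\bH(du+t\,d\phi) - \bH(du)]/t$ monotonically decrease to $d\phi(\Nabla u)$ as $t \downarrow 0$, so monotone convergence delivers $F'_+(0) = \int_U d\phi(\Nabla u)\,d\fm$; running the same argument with $-\phi$ in place of $\phi$ gives the identical value for $F'_-(0)$, and the minimality of $F$ at $0$ forces both to vanish, which is precisely the defining condition for weak harmonicity.

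The main delicacy I expect is the first-variation step in $(\Leftarrow)$: because $\bH$ is only $\cC^1$ (not $\cC^2$) and $\Nabla u$ need not even be differentiable on the zero set of $du$, a naive differentiation under the integral via dominated convergence is not immediate. This is what forces both the separate check at $du_x = 0$ in $(\Rightarrow)$ and, in $(\Leftarrow)$, the clean route through the monotone, convexity-based difference quotient rather than a pointwise derivative argument.
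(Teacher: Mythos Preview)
Your proposal is correct and follows essentially the same route as the paper: both arguments exploit the convexity of $\bH$ to control the difference quotients of $t \mapsto \cE_{U'}(u+t\phi)$ and pass to the limit to obtain $\del_t[\cE_{U'}(u+t\phi)]|_{t=0}=\int_{U'} d\phi(\Nabla u)\,d\fm$, after which the equivalence is the standard ``critical point $\Leftrightarrow$ minimizer'' for convex functionals. The paper does this in one stroke via dominated convergence (sandwiching the quotient between the chord slopes at $t=\pm 1$), whereas you split the two implications, using the supporting-hyperplane inequality directly for $(\Rightarrow)$ and the monotonicity of difference quotients for $(\Leftarrow)$; these are equivalent packagings of the same convexity input.
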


\begin{proof}
The convexity of $\bH$ yields that, for any $t \in (0,1)$,
\begin{align*}
\bH(du) -\bH\big( d(u-\phi) \big) &\le \frac{\bH\big( d(u+t\phi) \big) -\bH(du)}{t}
 \le \bH\big( d(u+\phi) \big) -\bH(du), \\
\bH(du) -\bH\big( d(u-\phi) \big) &\le \frac{\bH(du) -\bH\big( d(u-t\phi) \big)}{t}
 \le \bH\big( d(u+\phi) \big) -\bH(du).
\end{align*}
Hence the dominated convergence theorem implies that
\[ \del_t [\cE_{U'}(u+t\phi)]|_{t=0} =\int_{U'} d\phi(\Nabla u) \,d\fm, \]
which completes the proof of the both assertions.
$\qedd$
\end{proof}

Note that $u \in H^1(U;\bH)$ was necessary to ensure $\cE_U(u)<\infty$,
while $u \in H^1_{\loc}(U;\bL)$ was used to make $\Lap_{\fm}^{\bH} u$ well-defined.
The hypothesis of the lemma is fulfilled by, for example,
the $p$-homogeneous deformations of Finsler (or Riemannian) Hamiltonians
(see Subsection~\ref{ssc:F^p}).
In that case our Laplacian coincides with the (weighted) $p$-Laplacian.

\subsection{Riccati equation}\label{ssc:Ricc}

In \cite[\S 4]{Le2}, Lee showed a Riccati type equation with respect to the Laplacian
$\Lap^{\bH}$ in \eqref{eq:LapH}
(see also \cite{ALge}, \cite{ALbl} and \cite{LLZ} for the sub-Riemannian case).
We repeat his argument for completeness, and derive a generalization of
the Bochner--Weitzenb\"ock formula along the line of \cite{OSbw} in the next subsection.

Let $u \in \cC^{\infty}(M)$ and fix $x \in M$ with $du_x \neq 0$.
On a small neighborhood $U$ of $x$ on which $du \neq 0$,
let us consider the solution $(u_t)_{t \in (-\ve,\ve)} \subset \cC^{\infty}(U)$
to the \emph{Hamilton--Jacobi equation}
\begin{equation}\label{eq:HJ}
\del_t u_t +\bH(du_t)=0, \qquad u_0=u|_U.
\end{equation}
A geometric (or dynamical) meaning of \eqref{eq:HJ} is that, for each $y \in U$,
\begin{equation}\label{eq:HJam}
\Phi_t(du_y) =(du_t)_{\cT_t(y)}
\end{equation}
holds (as far as $\cT_t(y) \in U$), where $t \longmapsto \cT_t(y)$
is the action-minimizing curve with $\del_t[\cT_t(y)]|_{t=0}=\Nabla u(y)$.

Put $\eta(t):=\cT_t(x)$
and let $(e^t_i)_{i=1}^n$ be a canonical frame along $\bm{\alpha}(t):=(du_t)_{\eta(t)}$.
Then, for each fixed $\tau$,
\[ \tilde{e}_i^{t,\tau}:=d\Phi_{\tau}(e_i^t) \in d\Phi_{\tau}(\cJ_{du_x}^t)
 =\cJ_{\bm{\alpha}(\tau)}^{t-\tau} \]
gives a canonical frame along $\bm{\alpha}(\tau+t)$
(similarly to the proof of Lemma~\ref{lm:t=0}).
We write down the map $d(du)_x:T_xM \lra T_{du_x}(T^*M)$ as, for each $t$,
\begin{equation}\label{eq:ddu}
d(du)_x \circ d\pi_M(\dot{e}^0_i)
 =\sum_{j=1}^n \{ A_{ij}(t) e^t_j +B_{ij}(t) \dot{e}^t_j \},
 \qquad i=1,\ldots,n,
\end{equation}
where $A(t)=(A_{ij}(t))$ and $B(t)=(B_{ij}(t))$ are $(n \times n)$-matrices.
Applying $d\Phi_t$ yields
\begin{equation}\label{eq:dPhi}
d\Phi_t \circ d(du)_x \circ d\pi_M(\dot{e}^0_i)
 =\sum_{j=1}^n \{ A_{ij}(t) \tilde{e}^{t,t}_j +B_{ij}(t) \dot{\tilde{e}}^{t,t}_j \}.
\end{equation}
On the one hand, since $\tilde{e}^{t,t}_j \in \cV_{\bm{\alpha}(t)}$,
\eqref{eq:dPhi} implies
\begin{align*}
d\cT_t \circ d\pi_M(\dot{e}^0_i)
&= d(\pi_M \circ \Phi_t \circ du)_x \circ d\pi_M(\dot{e}^0_i)
 =d\pi_M \left( \sum_{j=1}^n \{ A_{ij}(t) \tilde{e}^{t,t}_j +B_{ij}(t) \dot{\tilde{e}}^{t,t}_j \} \right) \\
&= \sum_{j=1}^n B_{ij}(t) d\pi_M(\dot{\tilde{e}}^{t,t}_j).
\end{align*}
On the other hand, differentiating \eqref{eq:HJam} at $y=x$ gives
$d\Phi_t \circ d(du)_x =d(du_t)_{\eta(t)} \circ (d\cT_t)_x$.
By combining these with \eqref{eq:dPhi}, we have
\[ \sum_{j=1}^n B_{ij}(t) d(du_t)_{\eta(t)} \circ d\pi_M(\dot{\tilde{e}}^{t,t}_j)
 =\sum_{j=1}^n \{ A_{ij}(t) \tilde{e}^{t,t}_j +B_{ij}(t) \dot{\tilde{e}}^{t,t}_j \}. \]
This shows that $\Hess^{\bH}\! u_t(\eta(t))=-B(t)^{-1}A(t)$ in the frame
$(\tilde{e}^{t,t}_i;\dot{\tilde{e}}^{t,t}_j)$.

By differentiating \eqref{eq:ddu} in $t$, we obtain
\[ 0=\sum_{j=1}^n \left[ \left\{ \dot{A}_{ij}(t) -\sum_{k=1}^n B_{ik}(t) \bR_{kj}(t) \right\}e^t_j
 +\{ A_{ij}(t) +\dot{B}_{ij}(t) \} \dot{e}^t_j \right], \]
where we set $\bR_{\alpha}^t(e^t_j)=\sum_{k=1}^n \bR_{jk}(t)e^t_k$.
This implies
\[ \dot{B}(t)=-A(t), \qquad \dot{A}(t)=B(t) \bR(t). \]
We consequently obtain the \emph {matrix Riccati equation} (\cite[(31)]{Le2})
\begin{equation}\label{eq:mRicc}
\del_t \big[\! \Hess^{\bH}\! u_t\big( \eta(t) \big) \big]
 +\left[ \Hess^{\bH}\! u_t\big( \eta(t) \big) \right]^2 +\bR(t) =0.
\end{equation}
Taking the trace yields,
by the symmetry of $\Hess^{\bH}\!u_t$ (see \eqref{eq:HHess}) and Lemma~\ref{lm:t=0},
\begin{equation}\label{eq:Ricc}
\del_t \big[ \Lap^{\bH} u_t\big( \eta(t) \big) \big]
 +\left\| \Hess^{\bH}\! u_t\big( \eta(t) \big) \right\|_{\HS(du_t)}^2
 +\Ric^{\bH}\! \left( (du_t)_{\eta(t)} \right) =0,
\end{equation}
where $\|\cdot\|_{\HS(du_t)}$ denotes the \emph{Hilbert--Schmidt norm}
with respect to a canonical frame along $\bm{\alpha}(t)=(du_t)_{\eta(t)}$.

\subsection{Bochner--Weitzenb\"ock formula}\label{ssc:BW}

Taking the reference measure $\fm$ into account, we readily observe from
\eqref{eq:Ricc} and \eqref{eq:LapHm} that
\begin{equation}\label{eq:wRicc}
\del_t \big[ \Lap^{\bH}_{\fm} u_t\big( \eta(t) \big) \big]
 +\left\| \Hess^{\bH}\! u_t\big( \eta(t) \big) \right\|_{\HS(du_t)}^2
 +\Ric^{\bH}_{\infty}\! \left( (du_t)_{\eta(t)} \right) =0.
\end{equation}
From this, similarly to the Finsler case,
one can derive the \emph{Bochner--Weitzenb\"ock formula}.

\begin{theorem}[Bochner--Weitzenb\"ock formula]\label{th:BW}
Let $u \in \cC^{\infty}(M)$.
At any $x \in M$ with $du_x \neq 0$, we have
\begin{align}
\Delta^{du}_{\fm} \big( \bH(du) \big) -d(\Lap^{\bH}_{\fm} u)(\Nabla u)
&=\Ric^{\bH}_{\infty}(du) +\left\| \Hess^{\bH}\! u \right\|_{\HS(du)}^2, \label{eq:BW}\\
\Delta^{du}_{\fm} \big( \bH(du) \big) -d(\Lap^{\bH}_{\fm} u)(\Nabla u)
&\ge \Ric^{\bH}_N(du) +\frac{(\Lap^{\bH}_{\fm} u)^2}{N} \label{eq:N-BW}
\end{align}
for $N \in [n,\infty)$, where
\[ \Delta^{du}_{\fm} f :=\div_{\fm}\left(
 \sum_{i,j=1}^n \bH_{\alpha_i \alpha_j}(du) \frac{\del f}{\del x^j} \del_{x^i} \right). \]
\end{theorem}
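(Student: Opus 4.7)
The plan is to derive both inequalities from the Riccati equation \eqref{eq:wRicc} evaluated at $t=0$, viewing it as an ``equation along $\eta$'' and then splitting the total derivative into a time and a spatial contribution. Concretely, fix $x$ with $du_x \neq 0$, work on the neighborhood $U$ where $du \neq 0$, let $(u_t)_{t \in (-\ve,\ve)}$ be the solution of the Hamilton--Jacobi equation \eqref{eq:HJ} with $u_0 = u|_U$, and let $\eta(t) = \cT_t(x)$ so that $\dot{\eta}(0) = \Nabla u(x)$. The chain rule gives
\[
\partial_t \bigl[ \Lap^{\bH}_{\fm} u_t(\eta(t)) \bigr] \Big|_{t=0}
 = \bigl( \partial_t \Lap^{\bH}_{\fm} u_t \bigr)(x)\big|_{t=0}
  + d(\Lap^{\bH}_{\fm} u)_x(\Nabla u(x)),
\]
so matching this against \eqref{eq:wRicc} reduces the identity \eqref{eq:BW} to the single claim
\[
\bigl( \partial_t \Lap^{\bH}_{\fm} u_t \bigr)(x)\big|_{t=0} = -\Delta^{du}_{\fm}\bigl( \bH(du) \bigr)(x).
\]

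The core step is to verify this claim. From \eqref{eq:HJ} one has $\partial_t u_t|_{t=0} = -\bH(du)$; denoting $f := \bH(du)$, the derivatives satisfy $\partial_t(\partial_{x^k} u_t)|_{t=0} = -\partial_{x^k} f$ and $\partial_t(\partial^2_{x^i x^j} u_t)|_{t=0} = -\partial^2_{x^i x^j} f$. Plugging these into the coordinate expression \eqref{eq:Lapu} for $\Lap^{\bH}_{\fm} u_t$ and differentiating in $t$ (using the chain rule on the coefficients $\bH_{\alpha_i x^i}(du_t)$, $\bH_{\alpha_i \alpha_j}(du_t)$, $\bH_{\alpha_i}(du_t)$) produces a sum of four terms. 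On the other hand, expanding $\Delta^{du}_{\fm} f = \div_{\fm}\bigl( \sum_{i,j} \bH_{\alpha_i \alpha_j}(du) \partial_{x^j} f\, \del_{x^i}\bigr)$ and using $\partial_{x^i}[\bH_{\alpha_i \alpha_j}(du)] = \bH_{x^i \alpha_i \alpha_j}(du) + \sum_k \bH_{\alpha_i \alpha_j \alpha_k}(du) \partial^2_{x^i x^k} u$ yields the same four terms with opposite sign, once one invokes the symmetry of mixed partials ($\bH_{\alpha_i x^i \alpha_j} = \bH_{x^i \alpha_i \alpha_j}$, etc.). This identification is essentially the standard principle that $\Delta^{du}_{\fm}$ is the linearization of the nonlinear operator $\Lap^{\bH}_{\fm}$ at the base function $u$; in the Finsler context this is the analogue of \cite{OSbw}. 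Combining this with \eqref{eq:wRicc} and the chain rule expansion above yields \eqref{eq:BW}.

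For the $N$-dimensional refinement \eqref{eq:N-BW}, I would start from \eqref{eq:BW} and apply the trace Cauchy--Schwarz inequality $\|\Hess^{\bH} u\|_{\HS(du)}^2 \ge (\trace \Hess^{\bH} u)^2/n = (\Lap^{\bH} u)^2/n$ (using that $\Lap^{\bH}$ is the trace of $\Hess^{\bH}$ by \eqref{eq:LapH}). Then, using \eqref{eq:LapHm} to write $\Lap^{\bH} u = \Lap^{\bH}_{\fm} u + (\psi \circ \eta)'(0)$ and the definition of $\Ric^{\bH}_N$ (subtracting $(\psi \circ \eta)'(0)^2/(N-n)$ from $\Ric^{\bH}_{\infty}$), the required estimate reduces to the elementary pointwise inequality
\[
\frac{\bigl(A + P\bigr)^2}{n} + \frac{P^2}{N-n} \;\ge\; \frac{A^2}{N},
\qquad A := \Lap^{\bH}_{\fm} u,\ P := (\psi \circ \eta)'(0),
\]
which is the standard consequence of $\tfrac{x^2}{a} + \tfrac{y^2}{b} \ge \tfrac{(x+y)^2}{a+b}$ applied to $x = A+P$, $y = -P$, $a = n$, $b = N-n$. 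The cases $N = n$ and $N = \infty$ follow by taking limits, with the convention $\Ric^{\bH}_n = -\infty$ when $P \neq 0$ handling the degenerate case.

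The main obstacle I anticipate is the bookkeeping in the linearization identity $\partial_t \Lap^{\bH}_{\fm} u_t|_{t=0} = -\Delta^{du}_{\fm}(\bH(du))$: one must carefully track the third-order terms $\bH_{\alpha_i \alpha_j \alpha_k}(du)$, match them against $\partial_{x^i}[\bH_{\alpha_i \alpha_j}(du)]$ on the $\Delta^{du}_{\fm}$ side, and ensure that the $\varsigma$-term in \eqref{eq:Lapu} produces exactly the divergence correction in the definition of $\Delta^{du}_{\fm}$. Once that identification is secured, combining with the Riccati identity \eqref{eq:wRicc} and the chain rule yields the whole theorem cleanly.
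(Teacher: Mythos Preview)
Your proposal is correct and follows the same overall strategy as the paper: derive \eqref{eq:BW} from the weighted Riccati identity \eqref{eq:wRicc} by splitting $\del_t[\Lap^{\bH}_{\fm}u_t(\eta(t))]|_{t=0}$ via the chain rule, and then obtain \eqref{eq:N-BW} from \eqref{eq:BW} by the standard trace/Cauchy--Schwarz argument combined with \eqref{eq:LapHm}. The only notable difference is in how the linearization identity $(\del_t \Lap^{\bH}_{\fm}u_t)(x)|_{t=0} = -\Delta^{du}_{\fm}(\bH(du))(x)$ is verified: you plan to differentiate the fully expanded coordinate formula \eqref{eq:Lapu} and match the resulting third-order terms against the expansion of $\Delta^{du}_{\fm}(\bH(du))$, whereas the paper bypasses that bookkeeping by writing $\Lap^{\bH}_{\fm}u_t = \div_{\fm}\bigl(\sum_i \bH_{\alpha_i}(du_t)\,\del_{x^i}\bigr)$ and differentiating the vector field \emph{inside} the divergence, obtaining $\del_t[\bH_{\alpha_i}((du_t)_x)]|_{t=0} = -\sum_j \bH_{\alpha_i\alpha_j}(du_x)\,\del_{x^j}[\bH(du)]$ directly from \eqref{eq:HJ} and hence $-\Delta^{du}_{\fm}(\bH(du))$ without ever unpacking $\div_{\fm}$. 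This completely sidesteps the third-order matching you flagged as the main obstacle.
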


\begin{proof}
Calculate the first term in \eqref{eq:wRicc} at $t=0$ as
\[ \del_t \big[ \Lap^{\bH}_{\fm} u_t\big( \eta(t) \big) \big]|_{t=0}
 =d(\Lap^{\bH}_{\fm} u)_x \big( \dot{\eta}(0) \big)
 +\div_{\fm} \left( \sum_{i=1}^n \del_t \big[ \bH_{\alpha_i} \big( (du_t)_x \big) \big]|_{t=0} \del _{x^i} \right). \]
Then \eqref{eq:BW} follows from $\dot{\eta}(0)=\Nabla u(x)$ and \eqref{eq:HJ} since
\[ \del_t \big[ \bH_{\alpha_i}\big( (du_t)_x \big) \big]|_{t=0}
 =-\sum_{i,j=1}^n \bH_{\alpha_i \alpha_j}\big( (du)_x \big)
 \frac{\del[\bH(du)]}{\del x^j}(x). \]
One can derive \eqref{eq:N-BW} from \eqref{eq:BW} in a standard way
with the help of \eqref{eq:LapHm}
(see the proof of \cite[Theorem~3.3]{OSbw} for instance).
$\qedd$
\end{proof}

It is easily seen that $\Delta^{du}_{\fm} f$ is well-defined on
$\{x \in M \,|\, du_x \neq 0\}$.

\begin{remark}\label{rm:BW}
(a)
The formulas in Theorem~\ref{th:BW} coincide with those in
\cite[Theorem~3.3]{OSbw} on Finsler manifolds.
We remark that, different from the Finsler setting (see \cite[Lemma~2.4]{OShf}),
$\Delta^{du}_{\fm} u=\Lap^{\bH}_{\fm}u$ does not necessarily hold
due to the non-homogeneity of $\bH$.

(b)
In the Finsler case, one can treat functions with lower regularity
by passing to the weak (integrated) formulations of \eqref{eq:BW} and \eqref{eq:N-BW}
(\cite[Theorem~3.6]{OSbw}).
It is a more delicate issue for general Hamiltonians.
Furthermore, we could obtain Bakry--\'Emery and Li--Yau gradient estimates in \cite[\S 4]{OSbw}
as applications of  the (weak form of) Bochner--Weitzenb\"ock formulas
(see also \cite{Ospl}, \cite{WX}, \cite{Xi} for further applications).
In these proofs, however, we were essentially indebted to the homogeneity of the Finsler metric
(Euler's theorem \cite[Theorem~2.2]{OSbw} to be precise)
and it is unclear whether these gradient estimates can be generalized to general Hamiltonians.
\end{remark}

\section{Examples}\label{sc:expl}

This section is devoted to discussing several examples of Hamiltonians
and calculating the curvature and Laplacian of them.

\subsection{Finsler Hamiltonians}\label{ssc:FinsH}

Let $(M,F)$ be a Finsler manifold as in Subsection~\ref{ssc:Fins}.

\begin{example}\label{ex:FinsH}
Let $\bL^F(v):=F(v)^2/2$ be the canonical Lagrangian induced from $F$,
and $\bH^F$ be the associated Hamiltonian.
Then the Hamiltonian curvature operator $\bR^0_{\alpha}:\cV_{\alpha} \lra \cV_{\alpha}$
for $\alpha \in T_x^*M \setminus \{0\}$ coincides with the Finsler curvature operator
$\bR^F(\cdot,\btau^*(\alpha))\btau^*(\alpha):T_xM \lra T_xM$
(with the reference vector $\btau^*(\alpha)$)
by identifying $\cV_{\alpha}$ and $T_xM$ (as in Remark~\ref{rm:quad}).
In particular, we have $\Ric^{\bH}(\alpha)=\Ric^F(\btau^*(\alpha))$.
\end{example}

One can verify this coincidence by the Riccati equation \eqref{eq:mRicc}
(whose Finsler version can be found in \cite[Lemmas~3.1, 3.2]{OSbw}) for instance
(see also \cite[\S 11]{Le2}).
Recall that there is a useful interpretation of the Finsler curvature
by using Riemannian structures induced from vector fields satisfying a certain condition
(see Subsection~\ref{ssc:Fins}).
For general Lagrangians, however, it seems difficult to obtain an analogous interpretation
because even $2\bL(v)=\sum_{i,j=1}^n \bL_{v^i v^j}(v)v^i v^j$ fails.

\subsection{Natural mechanical Hamiltonians}\label{ssc:mech}

Let $(M,g)$ be a Riemannian manifold and $Z \in \cC^{\infty}(M)$.

\begin{example}[Agrachev]\label{ex:mech}
The \emph{natural mechanical Hamiltonian}
\[ \bH(\alpha):=\bH^g(\alpha) +Z(x), \quad \alpha \in T^*_xM, \]
yields
\begin{equation}\label{eq:mech}
\bR_{\alpha}^0=\bR^g\big( \cdot,\btau^*(\alpha) \big) \btau^*(\alpha) +\Hess^g Z(x)
\end{equation}
by identifying $\cV_{\alpha}$ on the LHS and $T_xM$ on the RHS as in Example~\ref{ex:FinsH},
where $\btau^*$ is the Legendre transform common to $\bH^g$ and $\bH$.
In particular, we have $\Ric^{\bH}(\alpha)=\Ric^g(\btau^*(\alpha)) +\Lap^g Z(x)$.
\end{example}

We remark that $\bH$ is allowed to be negative since it is $\cC^{\infty}$ on whole $T^*M$.
Since $Z$ is constant on each $T_xM$, we immediately find that
$\bL(v)=\bL^g(v)-Z(x)$ for $v \in T_xM$,
and that the Legendre transform of $\bH$ coincides with that of $\bH^g$.
One can also see that the Hessian and Laplacian are common to $\bH^g$ and $\bH$.
Then, in the Riccati equation \eqref{eq:mRicc}, the term $\Hess^g Z$ in \eqref{eq:mech}
comes from the difference between the Hamilton--Jacobi equations \eqref{eq:HJ} of $\bH^g$ and $\bH$.
To see this, given $u \in \cC^{\infty}(M)$ and $x \in M$ with $du_x \neq 0$,
let $(u_t)$ and $(\tilde{u}_t)$ be the solutions to \eqref{eq:HJ}
around $x$, with $u_0=\tilde{u}_0=u$, with respect to $\bH^g$ and $\bH$, respectively.
By comparing the matrix representations of $\Hess^{\bH} \tilde{u}_t(x)$
and $\Hess^g u_t(x)$ as in \eqref{eq:HHess}, we have
\begin{align*}
&\del_t [\Hess^{\bH} \tilde{u}_t(x)]|_{t=0} -\del_t [\Hess^g u_t(x)]|_{t=0} \\
&= -\frac{1}{2}\sum_{k=1}^n \{
 \bH^g_{x^i \alpha_j \alpha_k}+\bH^g_{x^j \alpha_i \alpha_k}-\bH^g_{\alpha_i \alpha_j x^k} \}(x)
 \frac{\del Z}{\del x^k}(x) -\frac{\del^2 Z}{\del x^i \del x^j}(x) \\
&= -\frac{\del^2 Z}{\del x^i \del x^j}(x)
 +\sum_{k=1}^n \Gamma^k_{ij}(x)\frac{\del Z}{\del x^k}(x) \\
&= -\Hess^g Z(x).
\end{align*}
We used $\bH_{\alpha_i \alpha_j \alpha_k} \equiv 0$ in the first equality.
This yields \eqref{eq:mech} (with $\alpha=du_x$).
One can alternatively show \eqref{eq:mech} by the direct computation in \eqref{eq:Rij}.

The weighted curvature can be treated similarly as follows.
Let $\fm$ be a positive $\cC^{\infty}$-measure on $M$.
In the current situation, we can write down as $\fm=e^{-\psi}\vol_g$ globally.
Since the Legendre transform is common to $\bH^g$ and $\bH$,
the weighted Laplacian $\Lap^{\bH}_{\fm}$ coincides with $\Lap^g_{\fm}$.
Thus we find from \eqref{eq:Lapu} that, for $(u_t)$ and $(\tilde{u}_t)$ as above,
\begin{align*}
&\del_t [\Lap^{\bH}_{\fm} \tilde{u}_t(x)]|_{t=0} -\del_t [\Lap^g_{\fm} u_t(x)]|_{t=0} \\
&=-\sum_{i,j=1}^n \left\{ \bH^g_{\alpha_i x^i \alpha_j} \frac{\del Z}{\del x^j}
 +\bH^g_{\alpha_i \alpha_j} \frac{\del^2 Z}{\del x^i \del x^j}
 -\bH^g_{\alpha_i \alpha_j} \frac{\del \varsigma}{\del x^i} \frac{\del Z}{\del x^j} \right\} (x) \\
&= -\Lap^g_{\fm} Z(x).
\end{align*}
We used Euler's theorem in the last line.
Hence we have, by comparing \eqref{eq:wRicc} for $\bH^g$ and $\bH$,
\[ \Ric^{\bH}_{\infty}(\alpha)=\Ric^g_{\infty}\!\big( \btau^*(\alpha) \big) +\Lap^g_{\fm} Z(x). \]
Then, since $\bL_{v^i v^j}=\bL^g_{v^i v^j}$ and the difference
$\Ric^{\bH}_N(\alpha) -\Ric^{\bH}_{\infty}(\alpha)$ involves
only the first order derivatives of the weight function $\psi$,
we immediately obtain
\[ \Ric^{\bH}_N(\alpha)=\Ric^g_N\!\big( \btau^*(\alpha) \big) +\Lap^g_{\fm} Z(x) \]
for $N \in [n,\infty)$ as well.

\subsection{Convex deformations of Finsler Hamiltonians}\label{ssc:Fconv}

Let $(M,F)$ be a Finsler manifold and $h:\R \lra [0,\infty)$ be a
$\cC^{\infty}$-function with $h(0)=h'(0)=0$, $h'' >0$ on $(0,\infty)$,
as well as $\lim_{t \to \infty}h'(t)=\infty$.
In particular, $h>0$ on $(0,\infty)$
and $h':[0,\infty) \lra [0,\infty)$ is bijective.
Now, let us consider the Hamiltonian $\bH(\alpha):=h(F^*(\alpha))$.
Since
\[ \btau^*(\alpha)
 =\sum_{i=1}^n h'\big( F^*(\alpha) \big) \frac{\del F^*}{\del \alpha_i}(\alpha) \del_{x^i}
 =\frac{h'(F^*(\alpha))}{F^*(\alpha)} \btau^*_F(\alpha) \]
on $T^*M \setminus 0_{T^*M}$, where $\btau^*_F$ is the Legendre transform
with respect to $F$, we have
\begin{equation}\label{eq:hLap}
\Lap^{\bH}_{\fm} u =\div_{\fm} \left( \frac{h'(F^*(du))}{F^*(du)} \Nabla^F u \right)
 =\frac{h'(F^*(du))}{F^*(du)} \Lap^F_{\fm} u
 +d \left[ \frac{h'(F^*(du))}{F^*(du)} \right] (\Nabla^F u)
\end{equation}
on $\{ x \in M \,|\, du_x \neq 0 \}$.
One can see this also by calculating \eqref{eq:Lapu}.
The calculation of the curvature is a little more involved,
for that we go back to the definition of the curvature operator.
The following proposition shows that the ratio of $\Ric^{\bH}(\alpha)$
to $\Ric^F(\btau^*_F(\alpha))$ depends only on $F^*(\alpha)$ and $h$.

\begin{proposition}\label{pr:Fconv}
Let $\bH(\alpha)=h(F^*(\alpha))$ be as above.
Then we have
\[ \bR^0_{\alpha}=c(\alpha)^2 \wt{\bR}^0_{\alpha}, \qquad
 \Ric^{\bH}(\alpha)=c(\alpha)^2 \Ric^F\! \big( \btau^*_F (\alpha) \big) \]
for any $\alpha \in T^*M \setminus 0_{T^*M}$,
where $\wt{\bR}^0_{\alpha}$ denotes the curvature operator
$($in the sense of Definition~$\ref{df:Hcurv})$ with respect to $\bH^F$
and we set $c(\alpha):=h'(F^*(\alpha))/F^*(\alpha)$.
It also holds that $\Ric^{\bH}_N(\alpha)=c(\alpha)^2 \Ric^F_N(\btau^*_F (\alpha))$
for any $N \in [n,\infty]$.
\end{proposition}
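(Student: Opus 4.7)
The plan is to leverage the fact that, thanks to $\bH=h\circ F^*$, the two Hamiltonian vector fields are proportional: a chain-rule computation yields $\ora{\bH}=c\,\ora{\bH^F}$ with $c=h'(F^*)/F^*$, and $c$ is constant along the flow of $\bH$ because $\bH$, and hence $F^*$, is preserved along $\Phi_t$. Writing $c_0=c(\alpha)$, this forces $\Phi_t(\alpha)=\tilde{\Phi}_{c_0 t}(\alpha)$. The same scaling at the velocity level reproduces $\btau^*=c\,\btau^*_F$ (already noted in the passage preceding the proposition) and consequently $\dot{\eta}=c_0\,\dot{\tilde{\eta}}$ for the projected trajectories.

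To obtain the curvature identity, I would build a canonical frame for $\bH$ out of a well-chosen canonical frame $(\wt{e}^s_i)_{i=1}^n$ for $\bH^F$ along $\tilde{\bm{\alpha}}$. Using Appendix~\ref{ssc:cano}, I would initialize $\wt{e}^0_i$ orthonormal with respect to $\langle\cdot,\cdot\rangle^{\bH^F}_\alpha$, taking $\wt{e}^0_1=\sum_k\alpha_k\del_{\alpha_k}/F^*(\alpha)$ (a unit radial vector by Euler's theorem, using $\langle\alpha,\alpha\rangle^{\bH^F}_\alpha=2\bH^F(\alpha)=F^*(\alpha)^2$) and $\wt{e}^0_2,\dots,\wt{e}^0_n\in\ker(dF^*|_{\cV_\alpha})$. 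Differentiating the identity $\Phi_t(\beta)=\tilde{\Phi}_{c(\beta)t}(\beta)$ at $\beta=\alpha$ produces
\[
d\Phi_t|_\alpha(v)=d\tilde{\Phi}_{c_0 t}|_\alpha(v)+t\,dc|_\alpha(v)\,\ora{\bH^F}(\Phi_t(\alpha)),
\]
so pulling a vertical vector back through $(d\Phi_t)^{-1}$ differs from the Finsler pullback only by a multiple of $\ora{\bH^F}(\alpha)$, and since $dc|_\alpha=c'(F^*)\,dF^*$ annihilates $\wt{e}^0_i$ for $i\ge 2$, the correction is localized in the radial direction. A direct computation using $c'(F^*)F^*=h''(F^*)-c_0$ shows that the resulting candidate $n$-tuple satisfies $\omega(\dot{e}^0_i,e^0_j)=\operatorname{diag}(h''(F^*),c_0,\dots,c_0)_{ij}$; the canonical frame is then obtained by rescaling $e^t_1\mapsto h''(F^*)^{-1/2}e^t_1$, $e^t_i\mapsto c_0^{-1/2}e^t_i$ for $i\ge 2$, with a further $\ora{\bH^F}(\alpha)$-correction to enforce $\ddot{e}^t_i\in\cJ^t_\alpha$.

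Once this canonical frame is in hand, the chain rule $\del_t^2=c_0^2\del_s^2$ applied to $\ddot{e}^0_i=-\bR^0_\alpha(e^0_i)$ produces the factor $c_0^2$ on the transverse directions, while both $\wt{\bR}^0_\alpha(\wt{e}^0_1)$ and $\bR^0_\alpha(e^0_1)$ vanish (the Finsler flag curvature is zero along its own flagpole by Example~\ref{ex:FinsH}). This gives $\bR^0_\alpha=c_0^2\,\wt{\bR}^0_\alpha$ on $\cV_\alpha$, and the basis-independent trace together with Example~\ref{ex:FinsH} yields $\Ric^{\bH}(\alpha)=c_0^2\Ric^F(\btau^*_F(\alpha))$. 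For the weighted statement, expanding $\bL_{v^iv^j}=c_0^{-1}g_{ij}+(1/h''(F^*)-1/c_0)F_{v^i}F_{v^j}$ and applying the matrix determinant lemma together with the Finsler identity $g^{ij}F_{v^i}F_{v^j}=1$ gives $\det[\bL_{v^iv^j}(\dot{\eta})]=c_0^{-(n-1)}h''(F^*(\alpha))^{-1}\det[g_{ij}(\dot{\tilde{\eta}})]$, a quantity constant in $t$ along $\eta$ since $F^*$ is conserved. Hence the Hamilton weight $\psi$ in $\fm=e^{-\psi}\vol_{\dot{\eta}}$ and the Finsler weight $\psi^F$ in $\fm=e^{-\psi^F}\vol_V$ differ by a constant along $\eta$, giving $(\psi\circ\eta)^{(k)}(0)=c_0^k(\psi^F\circ\tilde{\eta})^{(k)}(0)$ for $k=1,2$; substituting into Definitions~\ref{df:wHRic} and~\ref{df:FRic} produces $\Ric^{\bH}_N(\alpha)=c_0^2\Ric^F_N(\btau^*_F(\alpha))$ for all $N\in[n,\infty]$. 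The main obstacle is the canonical frame construction: both conditions of Definition~\ref{df:cano} must hold while $dc$ acts nontrivially on the radial vector, forcing the asymmetric rescalings by $h''(F^*)^{-1/2}$ (radial) and $c_0^{-1/2}$ (transverse) together with the $\ora{\bH^F}(\alpha)$-correction.
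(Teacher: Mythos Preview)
Your approach is essentially the paper's: both exploit $\ora{\bH}=c\,\ora{\bH^F}$ (with $c$ constant along the flow) to obtain $\Phi_t(\alpha)=\tilde{\Phi}_{c_0 t}(\alpha)$, split $\cV_\alpha$ into the radial direction and its $\langle\cdot,\cdot\rangle^{\bH^F}$-complement, observe that $\bH_{\alpha_i\alpha_j}$ is $\operatorname{diag}(h''(F^*),c_0,\dots,c_0)$ in adapted coordinates, and manufacture a canonical frame for $\bH$ from a Finsler canonical frame $(\tilde{e}_i^s)$ by the rescalings $h''(F^*)^{-1/2}$ (radial) and $c_0^{-1/2}$ (transverse); the factor $c_0^2$ then falls out of the chain rule $\partial_t^2=c_0^2\partial_s^2$.

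Where you differ is in the treatment of the radial slot. The paper works in coordinates along $\tilde{\bm{\alpha}}$ and simply sets $e_i^t:=(\text{const})\,\tilde{e}_i^{c_0 t}$, asserting this is canonical for $\bH$. You instead differentiate $\Phi_t(\beta)=\tilde{\Phi}_{c(\beta)t}(\beta)$ and keep the extra term $t\,dc(v)\,\ora{\bH^F}(\bm{\alpha}(t))$, which shows that for the radial vector (where $dc\neq 0$ unless $h$ is quadratic) the naive choice fails to lie in $\cJ^t_\alpha(\bH)$, so a linear-in-$t$ correction along $\ora{\bH^F}(\alpha)$ is needed. This is a genuine refinement: the paper's proof does not address this point. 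It is ultimately harmless for the identity $\bR^0_\alpha=c_0^2\wt{\bR}^0_\alpha$ because the correction has zero second derivative and both curvature operators annihilate the radial direction, but your version makes the canonical-frame verification honest. Your weighted argument via $\bL_{v^iv^j}=c_0^{-1}g_{ij}+(h''^{-1}-c_0^{-1})F_{v^i}F_{v^j}$ and the matrix determinant lemma is also more explicit than the paper's one-line remark that $\psi$ and $\tilde{\psi}$ differ by a constant along $\eta$; both reach the same conclusion.
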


\begin{proof}
Put $c:=c(\alpha)$ and $\tilde{\bm{\alpha}}(t):=\Phi_t^F(\alpha)$ for $t \in (-\ve,\ve)$,
and take a coordinate $(x^i)_{i=1}^n$ such that
$dx^1=F^*(\tilde{\bm{\alpha}}(t))^{-1} \tilde{\bm{\alpha}}(t)$ and
$(\del_{x^i})_{i=1}^n$ is orthonormal with respect to
$g_{\btau^*_F(\bm{\alpha}(t))}$ for all $t$.
Then we can take a canonical frame $(\tilde{e}_i^t)_{i=1}^n$
along $\tilde{\bm{\alpha}}(t)$ with respect to $\bH^F$
such that $\tilde{e}_1^t=(d\Phi_t^F)^{-1}(\del_{\alpha_1}|_{\tilde{\bm{\alpha}}(t)})$.
(Start from $\bar{e}_i^t:=(d\Phi_t^F)^{-1}(\del_{\alpha_i}|_{\tilde{\bm{\alpha}}(t)})$
in Appendix~\ref{ssc:cano} and observe that $\Omega^t_{i1} \equiv 0$ for all $i$
from $\ddot{\bar{e}}^t_1 \equiv 0$ and Lemma~\ref{lm:orth},
then we have $O^t_{11} \equiv 1$ and
$O^t_{1i}=O^t_{i1} \equiv 0$ for $i \ge 2$ by decomposing the equation in Proposition~\ref{pr:O^t}.)

Switching to $\bH$, we first observe from
\[ \ora{\bH}\big( \tilde{\bm{\alpha}}(t) \big)
 =\frac{h'(F^*(\tilde{\bm{\alpha}}(t)))}{F^*(\tilde{\bm{\alpha}}(t))}
 \ora{\bH}^F\big(\tilde{\bm{\alpha}}(t) \big)
 =c \ora{\bH}^F\big(\tilde{\bm{\alpha}}(t) \big) \]
that $\Phi_t(\alpha)=\Phi_{ct}^F(\alpha)$.
Put $\bm{\alpha}(t):=\Phi_t(\alpha)=\tilde{\bm{\alpha}}(ct)$
and note that, by $\bH_{\alpha_i}=[h'(F^*)/F^*] \bH_{\alpha_i}^F$,
$\bH_{\alpha_i \alpha_j}^F(\bm{\alpha}(t))=\delta_{ij}$,
the choice of the coordinate and Euler's theorem,
\begin{align*}
\bH_{\alpha_i \alpha_j}\big( \bm{\alpha}(t) \big)
&=c\delta_{ij} \quad \text{for}\ i \ge 2\ \text{or}\ j \ge 2,\\
\bH_{\alpha_1 \alpha_1}\big( \bm{\alpha}(t) \big)
&= c+\left[ \frac{h''(F^*) F^* -h'(F^*)}{(F^*)^2} \frac{\del F^*}{\del \alpha_1} \bH^F_{\alpha_1} \right]
 \big( \bm{\alpha}(t) \big) \\
&= c+\left[ \frac{h''(F^*) F^* -h'(F^*)}{(F^*)^2} \frac{F^*}{F^*} \frac{2\bH^F}{F^*} \right]
 \big( \bm{\alpha}(t) \big) \\
&= h''\big( F^*(\alpha) \big).
\end{align*}
Hence
\[ e_1^t :=\frac{1}{\sqrt{h''(F^*(\alpha))}} \tilde{e}_1^{ct}, \qquad
e_i^t :=\frac{1}{\sqrt{c}} \tilde{e}_i^{ct} \quad \text{for}\ i \ge 2 \]
give a canonical frame along $\bm{\alpha}(t)$ with respect to $\bH$.
Now we have
\[ \bR_{\alpha}^t(e^t_1) =-\ddot{e}^t_1
 =\frac{c^2}{\sqrt{h''(F^*(\alpha))}} \wt{\bR}_{\alpha}^{ct}(\tilde{e}_1^{ct})
 =c^2 \wt{\bR}_{\alpha}^{ct}(e_1^t), \]
and similarly $\bR_{\alpha}^t(e^t_i)=c^2 \wt{\bR}_{\alpha}^{ct}(e_i^t)$ for $i \ge 2$.
These show $\bR^0_{\alpha}=c^2 \wt{\bR}^0_{\alpha}$
as well as $\Ric^{\bH}(\alpha)=c^2 \Ric^F(\btau^*_F(\alpha))$.

The weighted version immediately follows from the above calculations.
To be precise, denoting by $\tilde{\psi}$ and $\psi$ the weight functions
as in Definition~\ref{df:wHRic} with respect to $\bH^F$ and $\bH$,
we find $\psi \circ \pi_M \circ \bm{\alpha}(t)=\tilde{\psi} \circ \pi_M \circ \tilde{\bm{\alpha}}(ct)+C$
with some constant $C$ (depending on $F^*(\alpha)$ and $h$).
This yields $\Ric^{\bH}_N(\alpha)=c^2 \Ric^F_N(\btau^*_F(\alpha))$.
$\qedd$
\end{proof}

For instance, if $h(t)=(at)^2/2$ with some $a>0$, then we have
\[ \Ric^{\bH}(\alpha) =a^4 \Ric^F\! \big( \btau^*_F(\alpha) \big)
 =a^2 \Ric^F\! \big( \btau^*_F(\alpha) \big) \cdot \frac{\bH(\alpha)}{\bH^F(\alpha)}. \]
When we deform the Lagrangian as $\bL(v):=h(F(v))$ instead of the Hamiltonian,
we observe from
\[ \btau(v)=\frac{h'(F(v))}{F(v)} \btau_F(v), \qquad
 h\big( F(v) \big) +\bH\big( \btau(v) \big) =[\btau(v)](v) =h'\big( F(v) \big) F(v) \]
that $F^*(\btau(v))=h'(F(v))$ and
\[ \bH(\alpha) =F^*(\alpha) \cdot (h')^{-1}\big( F^*(\alpha) \big)
 -h \circ (h')^{-1}\big( F^*(\alpha) \big). \]
Since the function $\bar{h}(t):=t(h')^{-1}(t)-h \circ (h')^{-1}(t)$ is convex
(note that $\bar{h}'=(h')^{-1}$)
and satisfies $\bar{h}(0)=\bar{h}'(0)=0$, we can apply Proposition~\ref{pr:Fconv} and obtain
\[ \Ric^{\bH}_N(\alpha) =\left( \frac{(h')^{-1}(F^*(\alpha))}{F^*(\alpha)} \right)^2
 \Ric^F_N\! \big( \btau^*_F(\alpha) \big). \]

\subsection{Homogeneous deformations of Finsler Hamiltonians}\label{ssc:F^p}

Let $(M,F)$ be a Finsler manifold again.
One of the most important examples of convex deformations of $F^*$
is the \emph{$p$-homogeneous deformation}:
\[ \bH_p(\alpha) :=\frac{F^*(\alpha)^p}{p},\qquad 1<p<\infty. \]
The corresponding Lagrangian is $\bL_q(v) :=F(v)^q/q$,
where $q=p/(p-1)$ is the dual exponent of $p$.
By \eqref{eq:hLap}, $\bH_p$ leads to the (Finsler analogue of) famous \emph{$p$-Laplacian}
\[ \Lap_{\fm}^{\bH_p} u=\div_{\fm}\big( F^*(du)^{p-2} \cdot \Nabla^F u). \]
One can perform better analysis for $\Lap_{\fm}^{\bH_p}$ thanks to the homogeneity,
although we do not pursue that direction in this paper
(except for Example~\ref{ex:p-gf}, we refer to recent works \cite{Ke1}, \cite{Ke2} instead).
Note also that $\Ric^{\bH_p}_N(\alpha)=F^*(\alpha)^{2(p-2)}\Ric^F_N(\btau^*_F(\alpha))$
by Proposition~\ref{pr:Fconv}.
This suggests that the behavior of $\Lap_{\fm}^{\bH_p} u$ depends on
how large $F^*(du)$ is.

\section{Laplacian comparison theorem}\label{sc:Lcomp}

We return to a general Hamiltonian $\bH$ on $M$ and show a generalization of
the Laplacian comparison theorem associated with the weighted Ricci curvature $\Ric^{\bH}_N$
(suggested in \cite[Remark~4.4]{Le2},
see also \cite{ALbl} for a related work in the sub-Riemannian setting).
We also refer to \cite[Theorem~5.2]{OShf} for the Finsler case and to \cite{Gi}
for the more general case of metric measure spaces enjoying the curvature-dimension condition.
Our Laplacian comparison theorem would be compared with the Bonnet--Myers type theorem
of Agrachev and Gamkrelidze (\cite[Theorem in \S 4]{AG}, \cite[Theorem~2.1]{Agr}).
Throughout the section, we assume the forward completeness of the Hamiltonian flow $\Phi_t$.

\subsection{Laplacian comparison}\label{ssc:Lcomp}

Fix $z \in M$ and $c>0$.
We introduce the (dual of) \emph{exponential map of scale $c$},
$\exp_z^c:T_z^* M \setminus \{0\} \lra M$, by $\exp_z^c(\alpha):=\eta(a^{-1})$
where $a>0$ is chosen as $\bH(a\alpha)=c$ and $\eta$ is the action-minimizing curve
with $\dot{\eta}(0)=\btau^*(a\alpha)$.
In other words, $\exp_z^c(\alpha)=\pi_M(\Phi_{a^{-1}}(a\alpha))$.
Set also $\exp_z^c(0):=z$.

\begin{remark}\label{rm:exp}
In the Riemannian and Finsler cases,
the map $\exp_z^c$ coincides with the ordinary exponential map
$\exp_z$ for all $c>0$ (via the Legendre transform as usual).
For the $p$-homogeneous deformation $\bH_p$ of a Finsler metric as in Subsection~\ref{ssc:F^p},
it holds that
\[ \exp_z^{c'}(\alpha)=\exp_z^c \bigg( \left( \frac{c'}{c} \right)^{(p-2)/p} \alpha \bigg)
 \quad \text{for}\ c,c'>0,\ \alpha \in T_z^*M \]
since $\bH_p(a\alpha)=c'$ implies
\[ \bH_p \bigg( \left( \frac{c}{c'} \right)^{(p-1)/p} a \cdot \left( \frac{c'}{c} \right)^{(p-2)/p} \alpha \bigg)
 =\bH_p \bigg( \left( \frac{c}{c'} \right)^{1/p} a\alpha \bigg)
 =\frac{c}{c'} \bH_p(a\alpha) =c \]
and
\[ \btau^*\bigg( \left( \frac{c}{c'} \right)^{1/p} a\alpha \bigg)
 =\left( \frac{c}{c'} \right)^{(p-1)/p} \btau^*(a\alpha). \]
\end{remark}

Put
\[ D_z^c:=\{ t\alpha \in T_z^*M \,|\, \bH(\alpha)=c,\, 0<t<T_{\alpha} \},
 \qquad \cD_z^c:=\exp_z^c(D_z^c), \]
where $T_{\alpha} \in (0,\infty]$ is the supremum of $T>0$ for which
$d\!\exp_z^c$ is invertible on $\{t\alpha\}_{t \in (0,T)}$ and, for all $t \in (0,T)$,
there is no $\alpha' \in T_z^*M \setminus \{\alpha\}$ satisfying $\bH(\alpha')=c$
as well as $\exp_z^c(t\alpha')=\exp_z^c(t\alpha)$.
Then we define the one-form $\bm{\alpha}$ on $\cD_z^c$ by
$\bm{\alpha}(\exp_z^c(t\alpha))=\btau(\dot{\eta}(t))=\Phi_t(\alpha)$ for $t\alpha \in D_z^c$,
where $\bH(\alpha)=c$ and $\eta$ is the action-minimizing curve with
$\dot{\eta}(0)=\btau^*(\alpha)$.
Integrating along $\eta(t)=\exp_z^c(t\alpha)$ gives the function $u_z^c \in \cC^{\infty}(\cD_z^c)$ satisfying
$\lim_{x \to z} u_z^c(x)=0$ and $du_z^c=\bm{\alpha}$.
Since $(u_z^c \circ \eta)'(t)=\bm{\alpha}(\dot{\eta}(t))=c+\bL(\dot{\eta}(t))$, we have
\[ u_z^c \big( \eta(t) \big) =ct +\int_0^t \bL(\dot{\eta}) \,ds. \]

\begin{theorem}[Laplacian comparison]\label{th:Lcomp}
Assume that $\Ric^{\bH}_N(\alpha) \ge K$ holds for some $K \in \R$, $N \in [n,\infty)$,
and all $\alpha \in T^*M$ with $\bH(\alpha)=c$.
Then we have, for any $z \in M$, $\alpha \in T_z^*M$ with $\bH(\alpha)=c$, and
the action-minimizing curve $\eta:[0,T_{\alpha}] \lra \ol{\cD_z^c}$ with $\dot{\eta}(0)=\btau^*(\alpha)$,
\begin{equation}\label{eq:Lcomp}
\Lap^{\bH}_{\fm} u_z^c\big( \eta(t) \big) \le \sqrt{NK} \cot\left( \sqrt{\frac{K}{N}}t \right)
 \quad \text{for all}\ t \in (0,T_{\alpha})
\end{equation}
if $K>0$.
It similarly holds that
$\Lap^{\bH}_{\fm} u_z^c(\eta(t)) \le \sqrt{-NK} \coth(\sqrt{-K/N}t)$ if $K<0$,
and $\Lap^{\bH}_{\fm} u_z^c(\eta(t)) \le N/t$ if $K=0$.
\end{theorem}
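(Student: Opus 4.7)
The plan is to reduce the statement to a scalar Riccati inequality along $\eta$ by specializing the weighted identity \eqref{eq:wRicc} to the function $u_z^c$, and then to invoke a standard ODE comparison with the model solutions appearing in the statement.

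First, observe that $\bH(du_z^c) \equiv c$ on $\cD_z^c$ by construction, so $u_t := u_z^c - ct$ is a solution of the Hamilton--Jacobi equation \eqref{eq:HJ}. Fix $s_0 \in (0,T_\alpha)$ and put $x_0 := \eta(s_0)$; since $du_z^c(x_0) = \Phi_{s_0}(\alpha) = \btau(\dot\eta(s_0))$, the gradient $\Nabla u_z^c(x_0)$ equals $\dot\eta(s_0)$, and the action-minimizing curve appearing in \eqref{eq:HJam} that starts at $x_0$ is precisely $s \mapsto \eta(s_0 + s)$. Applying \eqref{eq:wRicc} at $x_0$ (noting that $du_t = du_z^c$ and $\Lap^{\bH}_{\fm} u_t = \Lap^{\bH}_{\fm} u_z^c$) yields, for $\varphi(s) := \Lap^{\bH}_{\fm} u_z^c(\eta(s))$,
\[
\varphi'(s) + \|\Hess^{\bH}\! u_z^c(\eta(s))\|^2_{\HS(du_z^c)} + \Ric^{\bH}_\infty(\Phi_s(\alpha)) = 0.
\]

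Next, I apply the Cauchy--Schwarz inequality to the trace of the symmetric operator $\Hess^{\bH}\! u_z^c$, which acts on an $n$-dimensional space and has trace $\Lap^{\bH} u_z^c = \varphi + (\psi \circ \eta)'(s)$ by \eqref{eq:LapHm}, where $\psi$ is the weight function from Definition~\ref{df:wHRic}. This gives $\|\Hess^{\bH}\! u_z^c\|^2_{\HS} \ge (\varphi + \psi')^2/n$. Combining with the identity $\Ric^{\bH}_\infty = \Ric^{\bH}_N + (\psi')^2/(N-n)$ from Definition~\ref{df:wHRic} and the elementary rearrangement $a^2/p + b^2/q \ge (a+b)^2/(p+q)$ (applied with $a = \varphi + \psi'$, $b = -\psi'$, $p = n$, $q = N-n$), we obtain
\[
\varphi'(s) \le -\frac{\varphi(s)^2}{N} - \Ric^{\bH}_N(\Phi_s(\alpha)) \le -\frac{\varphi(s)^2}{N} - K.
\]

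It remains to compare $\varphi$ with the maximal solution $\varphi_K$ of the model equation $\varphi_K' = -\varphi_K^2/N - K$, whose three branches (according to the sign of $K$) yield exactly the cotangent, hyperbolic-cotangent, and $N/s$ expressions in the statement, each blowing up like $N/s$ at $s = 0^+$. Writing $\delta := \varphi - \varphi_K$, one verifies $\delta' \le -((\varphi + \varphi_K)/N)\,\delta$, so Gronwall gives $\delta \le 0$ on $(0, T_\alpha)$ provided the initial bound $\limsup_{s \to 0^+} s\varphi(s) \le N$ holds. Establishing this initial bound is the main obstacle: I expect to obtain it by showing that the Jacobian of $\exp_z^c$ (weighted by $\fm$) vanishes of order $n-1$ at the vertex, so that $\varphi(s) \sim (n-1)/s \le N/s$ as $s \to 0^+$, in analogy with the polar-coordinate calculation. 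The delicate point is that this must be derived purely from the structure of the Jacobi curve $\cJ^t_\alpha$ and its canonical frame along the trajectory $\bm{\alpha}$ emanating from $z$, since the general Hamiltonian $\bH$ offers no homogeneity to exploit.
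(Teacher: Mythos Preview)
Your approach is essentially the paper's: reduce to the scalar Riccati inequality $\varphi'\le -K-\varphi^2/N$ via \eqref{eq:wRicc} and the Cauchy--Schwarz step (which the paper packages as a citation of Theorem~\ref{th:BW}), then compare with the model solution. Two points deserve comment.

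First, the comparison step. Your Gronwall argument requires finding arbitrarily small $s_0$ with $\varphi(s_0)\le\varphi_K(s_0)$; the bare hypothesis $\limsup_{s\to0^+}s\varphi(s)\le N$ does not obviously deliver this, since $s\varphi_K(s)\to N$ as well. You would need the strict inequality $s\varphi(s)\to n-1<N$ that you mention afterwards. The paper instead uses the identity
\[
\big[\bs_{K,N}^2(\varphi-\varphi_K)\big]'=-\frac{\bs_{K,N}^2}{N}(\varphi-\varphi_K)^2\le 0,
\]
which needs only $\lim_{t\to0}t^2\varphi(t)=0$ (together with $\lim_{t\to 0}\bs_{K,N}(t)^2\varphi_K(t)=0$) to conclude $\varphi\le\varphi_K$. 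This is a cleaner packaging and asks for a weaker initial asymptotic.

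Second, and more importantly, the initial asymptotic that you correctly flag as ``the main obstacle'' is settled in the paper not through an abstract Jacobi-curve analysis but by a direct coordinate choice: pick $(x^i)$ near $z$ so that every action-minimizing curve $\sigma$ from $z$ with $\bH(\btau(\dot\sigma))\equiv c$ is a Euclidean ray $\sigma(t)=(tb^i)$ with $\sum (b^i)^2=1$. Then $\Nabla u_z^c=r^{-1}\sum x^i\partial_{x^i}$ with $r=\big(\sum (x^i)^2\big)^{1/2}$, and one computes $r^2\sum\partial_{x^i}[r^{-1}x^i]=(n-1)r\to0$, giving $t^2\varphi(t)\to0$ (the weight contributes only bounded terms). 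This is exactly the ``polar-coordinate calculation'' you anticipate, but carried out with a concrete chart rather than via the canonical frame.
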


\begin{proof}
It follows from \eqref{eq:wRicc} with $u_t=u_z^c -ct$ and Theorem~\ref{th:BW} that
\[ \del_t \big[ \Lap^{\bH}_{\fm} u_z^c \big( \eta(t) \big) \big]
 \le -\Ric^{\bH}_N \big( (du_z^c)_{\eta(t)} \big) -\frac{[\Lap^{\bH}_{\fm} u_z^c(\eta(t))]^2}{N} \]
for $t \in (0,T_{\alpha})$.
We have $\Ric^{\bH}_N((du_z^c)_{\eta(t)})=\Ric^{\bH}_N(\bm{\alpha}(\eta(t))) \ge K$
by assumption, therefore
\[ \del_t \big[ \Lap^{\bH}_{\fm} u_z^c \big( \eta(t) \big) \big]
 \le -K -\frac{[\Lap^{\bH}_{\fm} u_z^c(\eta(t))]^2}{N} \]
for all $t \in (0,T_{\alpha})$.

Put $u(t):=\Lap^{\bH}_{\fm} u_z^c(\eta(t))$ for brevity and compare it with
$\tilde{u}(t):=N\bs'_{K,N}(t)/\bs_{K,N}(t)$, where
\[ \bs_{K,N}(t) := \left\{
 \begin{array}{cl}
 \sqrt{N/K} \sin (\sqrt{K/N}t) & {\rm if}\ K>0, \\
 t & {\rm if}\ K=0, \\
 \sqrt{-N/K} \sinh (\sqrt{-K/N}t) & {\rm if}\ K<0.
 \end{array} \right. \]
Observe that $\tilde{u}'=-K-\tilde{u}^2/N$ and
$\lim_{t \downarrow 0} \bs_{K,N}(t)^2 \tilde{u}(t)=0$.
Then the desired inequality $u \le \tilde{u}$ follows from
$\lim_{t \downarrow 0} \bs_{K,N}(t)^2 u(t)=0$ and the calculation (see \cite[(3.10)]{WW})
\begin{align*}
[\bs_{K,N}^2 (u-\tilde{u})]'
&= 2\bs_{K,N} \bs'_{K,N} (u-\tilde{u}) +\bs_{K,N}^2 (u-\tilde{u})'
 \le 2\frac{\bs_{K,N}^2 \tilde{u}}{N}(u-\tilde{u}) -\bs_{K,N}^2 \frac{u^2-\tilde{u}^2}{N} \\
&= -\frac{\bs_{K,N}^2}{N}(u-\tilde{u})^2 \le 0.
\end{align*}

Finally, $\lim_{t \downarrow 0} \bs_{K,N}(t)^2 u(t)=0$ can be seen as follows.
Choose a local coordinate $(x^i)_{i=1}^n$ of a small neighborhood $U$ of $z$
such that every action-minimizing curve $\sigma$
with $\sigma(0)=z$ and $\bH(\btau(\dot{\sigma})) \equiv c$
is represented as $\sigma(t)=(tb^i)_{i=1}^n$ with $\sum_{i=1}^n (b^i)^2=1$.
Then it holds that $\Nabla u_z^c(x) =r^{-1} \sum_{i=1}^n x^i \del_{x^i}$ on $U \setminus \{z\}$,
where  $r=\sqrt{\sum_{i=1}^n (x^i)^2}$.
Hence we find $\lim_{t \downarrow 0} t^2 u(t)=0$ by
\[ r^2\sum_{i=1}^n \del_{x^i}[r^{-1}x^i] =nr -\sum_{i=1}^n r^{-1}(x^i)^2
 =(n-1)r \to 0 \]
as $r \downarrow 0$.
$\qedd$
\end{proof}

\begin{remark}\label{rm:Lcomp}
In the Riemannian and Finsler cases, $u_z^c/\sqrt{2c}$ coincides with
the distance function $d_z$ from $z$, and \eqref{eq:Lcomp} is improved to
\begin{equation}\label{eq:sLcom}
\Lap_{\fm} d_z \le \sqrt{(N-1) \frac{K}{2c}}
 \cot\left( \sqrt{\frac{1}{N-1} \frac{K}{2c}}d_z \right)
\end{equation}
by decomposing $\Lap_{\fm} d_z$ into the radial and spherical directions from $z$.
It is unclear if a similar improvement can be done for general Hamiltonians
(see also the Bonnet--Myers type theorem of Agrachev and Gamkrelidze).
The point is that $u_z^c(\eta(t))$ is not proportional to $t$
since $(u_z^c \circ \eta)'(t)=c+\bL(\dot{\eta}(t))$
and $\bL(\dot{\eta})$ is not necessarily constant.
We also remark that even the stronger inequality \eqref{eq:sLcom}
does not imply the curvature bound $\Ric^{\bH}_N \ge K$
(see \cite[Remark~5.6]{StII} for a simple example,
and \cite{Ju} for a related work on the gap between the measure contraction property
and the curvature-dimension condition in Heisenberg groups).
\end{remark}

\subsection{Measure contraction property}\label{ssc:MCP}

A geometric counterpart to the Laplacian comparison is the \emph{measure contraction property}
(see \cite{Omcp}, \cite[\S 5]{StII} for the precise definition on metric measure spaces,
and \cite{ALge}, \cite{LLZ} for related works on sub-Riemannian manifolds).
To state it, we fix $c>0$, $z \in M$, $\alpha \in T_z^*M$
and $\eta:[0,T_{\alpha}] \lra M$ as in Theorem~\ref{th:Lcomp},
and take a local coordinate $(x^i)_{i=1}^n$ of a neighborhood $U$ of $\eta(T)$
with $T \in (0,T_{\alpha})$ such that $\Nabla u_z^c \equiv \del_{x^1}$.
We also introduce $\varsigma:U \lra \R$ by $\fm=e^{\varsigma} dx^1 \cdots dx^n$.
Then the measure contraction property we consider is that the ratio
\[ \frac{e^{\varsigma(\eta(t))}}{\bs_{K,N}(t)^N} \]
is non-increasing in $t$.
This is clearly equivalent to
\[ (\varsigma \circ \eta)'(t) \le N\frac{\bs'_{K,N}(t)}{\bs_{K,N}(t)}. \]
Since $\Nabla u_z^c \equiv \del_{x^1}$ and $\dot{\eta}(t)=\Nabla u_z^c(\eta(t))$,
we observe
\[ \div_{\fm}(\Nabla u_z^c) \big( \eta(t) \big) =\frac{\del \varsigma}{\del x^1} \big( \eta(t) \big)
 =(\varsigma \circ \eta)'(t). \]
Therefore the above measure contraction property is equivalent to
the Laplacian comparison \eqref{eq:Lcomp}:
\[ \Lap^{\bH}_{\fm} u_z^c \big( \eta(t) \big) \le N\frac{\bs'_{K,N}(t)}{\bs_{K,N}(t)}. \]

\begin{remark}\label{rm:Lee}
In \cite[Theorems~2.2, 2.10]{Le2}, Lee showed convexity estimates
of the relative and R\'enyi entropies along smooth optimal transports.
These can be interpreted as generalizations of the curvature-dimension condition
(recall the last paragraph in Section~\ref{sc:prel}),
and has applications to various monotonicity formulas along flows in Riemannian metrics
related to the Ricci flow (\cite[\S 2]{Le2}).
Precisely, this approach recovers the curvature-dimension conditions
$\CD(K,\infty)$ and $\CD(0,N)$ for Riemannian or Finsler manifolds.
The condition $\CD(K,N)$ with $K \neq 0$ and $N<\infty$ is more delicate and seems difficult
to treat for general Hamiltonians for the same reasoning as Remark~\ref{rm:Lcomp}.
\end{remark}

\section{Optimal transports and functional inequalities}\label{sc:opt}

In this section, we briefly recall some properties of optimal transports
measured by Lagrangian cost functions.
Then, assuming that the relative entropy is convex along all optimal transports,
we obtain functional inequalities along the lines of \cite{OV}, \cite{LV2}.
See the comprehensive reference \cite{Vi2} for optimal transport theory.

Let $M$ be compact throughout the section for simplicity,
and $\bL$ be our Lagrangian.
For later use, we fix an auxiliary Riemannian metric $g$ of $M$.

\subsection{Optimal transport theory}\label{ssc:opt}

For $x,y \in M$ and $T>0$, we consider the \emph{Lagrangian cost function} given by
\[ c^{\bL}_T(x,y) :=\inf_{\eta} \int_0^T \bL(\dot{\eta}) \,dt, \]
where the infimum is taken over all $\cC^1$-curves $\eta:[0,T] \lra M$ with $\eta(0)=x$
and $\eta(T)=y$.
Note that $c^{\bL_g}_T(x,y)=d_g(x,y)^2/(2T)$, where $d_g$ is the distance function
with respect to $g$.

Denote by $\cP(M)$ the set of all Borel probability measures on $M$.
Given $\mu,\nu \in \cP(M)$, $\pi \in \cP(M \times M)$ is called a \emph{coupling}
of $\mu$ and $\nu$ if its projections to the first and second arguments
coincide with $\mu$ and $\nu$, i.e.,
$\pi[A \times M]=\mu[A]$ and $\pi[M \times A]=\nu[A]$ for any Borel set $A \subset M$.
Denote by $\Pi(\mu,\nu)$ the set of all couplings of $\mu$ and $\nu$,
which is nonempty since the product measure $\mu \times \nu$
is clearly a coupling of $\mu$ and $\nu$.
Then the \emph{transport cost} from $\mu$ to $\nu$ (measured by $\bL$) is defined as
\begin{equation}\label{eq:W_c}
C^{\bL}_T(\mu,\nu) :=\inf_{\pi \in \Pi(\mu,\nu)} \int_{M \times M} c^{\bL}_T(x,y) \,\pi(dxdy),
\end{equation}
which is finite since $M$ is compact.
A coupling $\pi \in \Pi(\mu,\nu)$ achieving the infimum
in \eqref{eq:W_c} is called an \emph{optimal coupling} of $\mu$ and $\nu$.
Under mild assumptions, there exists a unique optimal coupling 
whose support is drawn as the graph of some map $\cT:M \lra M$
(an \emph{optimal transport} from $\mu$ to $\nu$).
This fundamental fact was first shown for the canonical Lagrangian on Euclidean spaces by Brenier~\cite{Br},
and extended to compact Riemannian manifolds by McCann~\cite{Mc2},
and then to general Lagrangians on noncompact spaces by Bernard and Buffoni~\cite{BeBu},
Fathi and Figalli~\cite{FF} and Villani~\cite[Chapter~10]{Vi2}.

\begin{theorem}{\rm (see \cite[Theorem~10.28]{Vi2})}\label{th:Monge}
Take $\mu,\nu \in \cP(M)$ such that $\mu \ll \fm$.
Then there exists a semi-convex function $\varphi$
$($with respect to $g)$ such that the map
\[ \cT_t(x):=\pi_M\big( \Phi_t(d\varphi_x) \big) \]
gives a unique optimal coupling $\pi=(\id_M \times \cT_T)_{\sharp}\mu$ of $\mu$ and $\nu$, i.e.,
$(\cT_T)_{\sharp}\mu=\nu$ and
\[ C^{\bL}_T(\mu,\nu)=\int_M c^{\bL}_T\big( x,\cT_T(x) \big) \,\mu(dx). \]
\end{theorem}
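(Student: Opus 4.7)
The strategy is to invoke Villani's machinery for optimal transport with general Lagrangian cost (\cite[Theorem~10.28]{Vi2}) after verifying that our Lagrangian $\bL$ falls within its scope. The argument proceeds via Kantorovich duality, semi-concavity of the cost function, and the correspondence between minimizing curves and the Hamiltonian flow. To begin, on the compact space $M \times M$ an optimal coupling $\pi_0 \in \Pi(\mu,\nu)$ exists by Prokhorov tightness and lower semi-continuity of $c_T^{\bL}$, and Kantorovich duality produces a $c_T^{\bL}$-concave Kantorovich potential $\varphi$ whose $c_T^{\bL}$-superdifferential contains $\supp\pi_0$.

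The key analytic input is the local semi-concavity of $c_T^{\bL}(x,\cdot)$ with respect to $g$, which follows from the super-linearity (Definition~\ref{df:Lag}(2)) and strong convexity (Definition~\ref{df:Lag}(3)) of $\bL$. Super-linearity yields a priori compactness of action-minimizing curves between prescribed endpoints, while strong convexity together with $\cC^\infty$-smoothness on $TM \setminus 0_{TM}$ provides smooth dependence of such minimizers on their endpoints via the Hamiltonian flow $\Phi_t$. This semi-concavity transfers to semi-convexity of $\varphi$. A Rademacher-type theorem for semi-convex functions then guarantees $\varphi$ is differentiable outside a set of vanishing $\fm$-measure, hence $\mu$-a.e.\ by $\mu \ll \fm$. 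At any point $x$ of differentiability, the first-order optimality condition combined with the strict convexity of $\bL$ uniquely determines the target $y$ as the endpoint of the action-minimizing curve $\eta:[0,T] \lra M$ with initial momentum $\btau(\dot{\eta}(0)) = d\varphi_x$; by the identification of the Hamiltonian flow with the Legendre-transformed Euler--Lagrange flow (Subsection~\ref{ssc:Ham}), this endpoint is exactly $\pi_M(\Phi_T(d\varphi_x)) = \cT_T(x)$. Hence $\pi_0 = (\id_M \times \cT_T)_{\sharp}\mu$, and uniqueness of the optimal coupling follows because any optimal $\pi$ is supported on a $c_T^{\bL}$-cyclically monotone set forcing the same conclusion.

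The hardest step is the semi-concavity/regularity of $c_T^{\bL}$, which requires careful control of action-minimizing curves: their compactness as families, smooth dependence on endpoints off a ``cut locus'', and the $\fm$-negligibility of that cut locus so that $\varphi$ remains differentiable $\mu$-a.e. A secondary subtlety is the $\cC^1$-only regularity of $\bL$ at the zero section, but this is harmless since $d\varphi_x = 0$ forces $\cT_T(x) = x$ (the constant curve is the unique minimizer), so no differentiation at $0_{TM}$ is ever needed in the construction.
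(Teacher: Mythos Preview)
Your proposal is correct and follows the same route as the paper: invoke Villani's Theorem~10.28 after checking that the Lagrangian $\bL$ meets its hypotheses. The paper does not give an independent proof but simply cites \cite[Theorem~10.28]{Vi2}, noting that \cite[Proposition~10.15, Theorem~10.26]{Vi2} supply the needed semi-concavity and twist conditions, and records the identity $d\varphi_x=\btau(\dot{\eta}(0))$ from \cite[Proposition~10.15(ii), (10.20)]{Vi2}; your sketch unpacks the same machinery (Kantorovich duality, semi-concavity of $c^{\bL}_T$, a.e.\ differentiability of $\varphi$, identification of $\cT_T$ via the Hamiltonian flow). Your remark on the $\cC^1$-only regularity of $\bL$ at $0_{TM}$ matches the paper's Remark~\ref{rm:C^2}, which handles the same issue by observing that the zero section is isolated in the Euler--Lagrange flow.
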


Denoted by $f_{\sharp}\mu$ was the push-forward measure of $\mu$ by a map $f$.
Note that \cite[Theorem~10.28]{Vi2} is certainly available by \cite[Proposition~10.15, Theorem~10.26]{Vi2}.
Observe also that, by \cite[Proposition~10.15(ii), (10.20)]{Vi2},
\[ d\varphi_x=-d\big[ c_T^{\bL}\big( \cdot,\cT_T(x) \big) \big]_x
 =\btau\big( \dot{\eta}(0) \big) \]
for $\mu$-almost every $x$,
where $\eta:[0,T] \lra M$ is the unique action-minimizing curve from $x$ to $\cT_T(x)$.
When we put $\mu_t:=(\cT_t)_{\sharp}\mu$,
it clearly holds that $C^{\bL}_T(\mu,\nu)=C^{\bL}_t(\mu,\mu_t) +C^{\bL}_{T-t}(\mu_t,\nu)$ for all $t \in (0,T)$.
We have $\mu_t \ll \fm$ for every $t \in (0,T)$ (see \cite[Theorem~8.7]{Vi2}).
The semi-convexity of $\varphi$ implies that $\varphi$ is twice differentiable $\mu$-almost everywhere,
so that $\cT_t$ is differentiable $\mu$-almost everywhere.

\begin{remark}\label{rm:C^2}
Lagrangians are assumed to be $\cC^2$ on whole $TM$ at some places in \cite{Vi2}.
However, it causes no problem because our Lagrangian fails to be $\cC^2$ only on the zero section $0_{TM}$
which is isolated in the Euler--Lagrange flow (recall the paragraph following Definition~\ref{df:Lag}).
\end{remark}

To state the \emph{change of variables} (or the \emph{Jacobian equation}) for the map $\cT_t$,
we define the \emph{Jacobian determinant} of $\cT_t$ at $x \in M$ with respect to $\fm$ by
\[ \bD_{\fm}[\cT_t](x)
 :=\lim_{r \downarrow 0} \frac{\fm[\cT_t(B^g(x,r))]}{\fm[B^g(x,r)]} \]
if the limit exists, where $B^g(x,r)$ is the open ball of center $x$ and radius $r$
with respect to the Riemannian distance induced from our auxiliary Riemannian metric $g$.
Note that, at points where $\cT_t$ is differentiable, $\bD_{\fm}[\cT_t]$
does not depend on the choice of $g$.

\begin{theorem}[Change of variables]\label{th:cov}
Let $\mu,\nu$ and $\cT_t$ be as in Theorem~$\ref{th:Monge}$,
and put $\mu_t:=(\cT_t)_{\sharp}\mu=\rho_t \fm$.
Then we have, for any $t \in (0,T)$ and nonnegative measurable function
$f:[0,\infty) \lra [0,\infty)$ with $f(0)=0$,
\begin{equation}\label{eq:cov}
\int_M f(\rho_t) \,d\fm
 =\int_M f \left( \frac{\rho_0}{\bD_{\fm}[\cT_t]} \right) \bD_{\fm}[\cT_t] \,d\fm.
\end{equation}
If in addition $\nu \ll \fm$, then \eqref{eq:cov} holds also at $t=T$.
\end{theorem}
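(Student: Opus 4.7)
The plan is to reduce the identity to the pointwise Jacobian equation $\rho_0(x) = \rho_t(\cT_t(x)) \cdot \bD_\fm[\cT_t](x)$ valid $\mu$-almost everywhere, and then perform the substitution $y = \cT_t(x)$ in the integral on the left-hand side. Three preparatory facts are needed.

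First, regularity of $\cT_t$. By Theorem~\ref{th:Monge}, $\varphi$ is semi-convex, hence twice differentiable $\fm$-almost everywhere by Alexandrov's theorem, and so $\mu$-almost everywhere since $\mu \ll \fm$. At any such point $x$ with $d\varphi_x \neq 0$, the Hamiltonian flow $\Phi_t$ is $\cC^\infty$ on $T^*M \setminus 0_{T^*M}$, so $\cT_t = \pi_M \circ \Phi_t \circ d\varphi$ is classically differentiable at $x$. The exceptional set $\{x : d\varphi_x = 0\}$ contributes nothing: there $\cT_t(x) = x$ (as $\Phi_t(0) \equiv 0$ from $\bH|_{0_{T^*M}} \equiv 0$), and its image is absorbed by the hypothesis $f(0) = 0$ together with the fact that $\rho_0 = 0$ there up to $\mu$-negligible mass, interpreting $\rho_0 / \bD_\fm[\cT_t]$ as $0$ when the denominator degenerates.

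Second, essential injectivity of $\cT_t$. For $t \in (0, T)$, the interpolating map $\cT_t$ is $\mu$-essentially injective on $\supp \mu$: if $\cT_t(x_1) = \cT_t(x_2)$ at two regular points, then the two action-minimizing curves joining $x_i$ to $\cT_T(x_i)$ coincide after time $t$, hence are a single curve on $[t, T]$, and concatenating with either tail produces two distinct minimizers of the same action between common endpoints, contradicting the uniqueness of the action-minimizing curve emanating from a given covector in $T^*M \setminus 0_{T^*M}$. The same argument breaks at $t = T$; under the extra assumption $\nu \ll \fm$, however, Theorem~\ref{th:Monge} applied in reverse furnishes an optimal map from $\nu$ to $\mu$ that is $\nu$-a.e.\ an inverse of $\cT_T$, restoring the essential injectivity. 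Moreover, at any point of twice-differentiability of $\varphi$, the limit defining $\bD_\fm[\cT_t](x)$ exists and equals $|\!\det d\cT_t(x)|$ (in any local chart, rescaled by the densities of $\fm$), and it is strictly positive for $t \in (0, T)$ by a Jacobi-field argument: the smoothness and symplecticity of $\Phi_t$ on $T^*M \setminus 0_{T^*M}$ together with semi-convexity of $\varphi$ forces $d\cT_t$ to have full rank in the interior of the minimizing interval.

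From these, the standard Monge change-of-variables argument applies. The push-forward identity $(\cT_t)_\sharp(\rho_0 \fm) = \rho_t \fm$ localises to
\begin{equation*}
\rho_0(x) = \rho_t\bigl(\cT_t(x)\bigr) \cdot \bD_\fm[\cT_t](x)
\qquad \mu\text{-a.e.},
\end{equation*}
and essential injectivity allows one to compute, since $\mu_t$ is supported in $\cT_t(\supp \mu)$ modulo $\fm$-null sets,
\begin{equation*}
\int_M f(\rho_t) \, d\fm
= \int_{\supp \mu} f\bigl(\rho_t(\cT_t(x))\bigr) \, \bD_\fm[\cT_t](x) \, \fm(dx)
= \int_M f\!\left( \frac{\rho_0(x)}{\bD_\fm[\cT_t](x)} \right) \bD_\fm[\cT_t](x) \, \fm(dx),
\end{equation*}
where the contribution of $\{\rho_0 = 0\}$ (including $\{d\varphi = 0\}$) vanishes by $f(0) = 0$. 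The main obstacle is the second step: rigorously certifying the essential injectivity and positivity of $\bD_\fm[\cT_t]$ at interior times $t$, since the Legendre transform is only $\cC^1$ at the zero section and the classical Riemannian distance-function arguments of \cite{Vi2} must be replaced by the Hamiltonian flow picture developed in Subsection~\ref{ssc:Ham}; the boundary case $t = T$ then requires the extra absolute-continuity assumption on $\nu$ precisely to recover this injectivity via a dual transport.
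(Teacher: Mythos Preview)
Your approach is correct in spirit and essentially unpacks the content of \cite[Theorem~11.3]{Vi2}, which the paper invokes directly. The paper's proof is therefore much shorter: it fixes the auxiliary Riemannian metric $g$, writes $\fm=e^{-\psi}\vol_g$, applies Villani's change-of-variables theorem with $F(x,s)=e^{-\psi(x)}f(se^{\psi(x)})$ to obtain the formula relative to $\vol_g$, and then converts via the identity $\bD_{\fm}[\cT_t]=e^{\psi-\psi(\cT_t)}\bD^g_{\fm}[\cT_t]$. All the regularity, injectivity, and Jacobian-equation work you sketch is absorbed into that citation; the only thing handled separately (and only parenthetically) is the split between $\{d\varphi\neq 0\}$ and its complement, which you also address.

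Your argument for essential injectivity at interior times has a gap, however. You assert that if $\cT_t(x_1)=\cT_t(x_2)=y$ then the two action-minimizing curves ``coincide after time $t$'', but this requires knowing that they share the same velocity (equivalently the same covector under $\btau$) at $y$, which you have not established; without that, the Hamiltonian flow gives no reason for the curves to agree on $[t,T]$, and the subsequent concatenation step produces no contradiction. The cleanest fix is to do for $t\in(0,T)$ exactly what you already do for $t=T$: the paper records, just before the theorem (citing \cite[Theorem~8.7]{Vi2}), that $\mu_t\ll\fm$ for every $t\in(0,T)$, so Theorem~\ref{th:Monge} applied with source $\mu_t$ furnishes a $\mu_t$-a.e.\ inverse of $\cT_t$. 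With that in place your Jacobian equation and substitution go through as written.
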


\begin{proof}
This is a simple application of \cite[Theorem~11.3]{Vi2}.
Decompose $\fm$ as $\fm=e^{-\psi} \vol_g$.
Applying \cite[Theorem~11.3]{Vi2} to $F(x,s)=e^{-\psi(x)}f(se^{\psi(x)})$ yields
\[ \int_M e^{-\psi} f(\rho_t) \,d\!\vol_g
 =\int_M e^{-\psi(\cT_t)} f \left(
 \frac{\rho_0 e^{-\psi}}{\bD^g_{\fm}[\cT_t]} e^{\psi(\cT_t)} \right) \bD^g_{\fm}[\cT_t] \,d\!\vol_g, \]
where $\bD^g_{\fm}[\cT_t]$ denotes the Jacobian determinant with respect to $\vol_g$.
Together with $\bD_{\fm}[\cT_t]=e^{\psi-\psi(\cT_t)}\bD^g_{\fm}[\cT_t]$, we have
\[ \int_M f(\rho_t) \,d\fm
 =\int_M f \left( \frac{\rho_0}{\bD_{\fm}[\cT_t]} \right) \bD_{\fm}[\cT_t] \,d\fm. \]
(Precisely, one needs to separate the discussion into
$M_{\varphi}:=\{ x \in M \,|\, d\varphi_x \neq 0 \}$ and $M \setminus M_{\varphi}$.
On $M \setminus M_{\varphi}$, $\cT_t$ is the identity map and
$\bD_{\fm}[\cT_t]=1$ for all $t$, $\mu$-almost everywhere.)
$\qedd$
\end{proof}

\subsection{Functional inequalities}\label{ssc:func}

For $\mu \in \cP(M)$, the \emph{relative entropy} with respect to $\fm$ is defined by
\[ \Ent_{\fm}(\mu) :=\int_M \rho \log\rho \,d\fm \]
if $\mu=\rho\fm \ll \fm$, and $\Ent_{\fm}(\mu):=\infty$ otherwise.
Note that $\Ent_{\fm}(\mu) \ge -\log(\fm[M])$ by Jensen's inequality.

Lee in \cite[Theorem~2.2]{Le2} showed that, along an optimal transport $(\mu_t)_{t \in [0,T]}$
as in Theorem~\ref{th:Monge} such that $\varphi \in \cC^{\infty}(M)$,
it holds that
\[ \del_t^2 [\Ent_{\fm}(\mu_t)]
 \ge \int_M \Ric_{\infty}^{\bH}\!\big( \Phi_t(d\varphi_x) \big) \,\mu_0(dx)
 \quad \text{for all}\ t. \]
Therefore, if $\Ric_{\infty}^{\bH}(\alpha) \ge K\bH(\alpha)$ for some $K \in \R$ and all $\alpha \in T^*M$,
then we have
\begin{equation}\label{eq:Kconv}
\Ent_{\fm}(\mu_{(1-t)a+bt}) \le (1-t)\Ent_{\fm}(\mu_a) +t\Ent_{\fm}(\mu_b)
 -\frac{K}{2}(1-t)t (b-a)^2 \int_M \bH(d\varphi_x) \,\mu_0(dx)
\end{equation}
for all $0 \le a<b \le T$ and $t \in [0,1]$.
We set
\[ C^{\bH}_T(\mu_0,\mu_1):=T\int_M \bH(d\varphi_x) \,\mu_0(dx)
 =\int_0^T \!\int_M \bH\big( \Phi_t(d\varphi_x) \big) \,\mu_0(dx) dt \]
for later use.
Note that $C^{\bH}_T=C^{\bL}_T$ in the Riemannian and Finsler cases.

We say that $\Ent_{\fm}$ is \emph{$K$-convex} for $K \in \R$ and $T>0$ if \eqref{eq:Kconv} holds
for all (not necessarily smooth) optimal transports $(\mu_t)_{t \in [0,T]}$ as in Theorem~\ref{th:Monge}.

\begin{example}\label{ex:CDK}
(a)
For Finsler manifolds, thanks to the homogeneity, the $K$-convexity for some $T>0$ implies
the $K$-convexity for all $T>0$.
In this case, the $2K$-convexity of $\Ent_{\fm}$ with respect to $\bH^F(\alpha)=F^*(\alpha)^2/2$
is the very definition of the curvature-dimension condition
$\CD(K,\infty)$ and is equivalent to $\Ric^F_{\infty}(v) \ge KF(v)^2$ (recall Subsection~\ref{ssc:Fins}).

(b)
Assume that a Finsler manifold $(M,F,\mu)$ satisfies $\Ric^F_{\infty}(v) \ge KF(v)^2$.
Then, for the $p$-homogeneous deformation $\bH_p(\alpha)=F^*(\alpha)^p/p$, we have
\[ \Ric^{\bH_p}_{\infty}(\alpha)
 =F^*(\alpha)^{2(p-2)} \Ric^F_{\infty}\big( \btau^*_F(\alpha) \big)
 \ge KF^*(\alpha)^{2(p-2)+2} \\
 =pK\bH_p(\alpha) F^*(\alpha)^{p-2}. \]
To estimate $F^*(\alpha)$ for $K \neq 0$,
observe that $TF^*(d\varphi) \le \diam_F M$ along all optimal transports $(\mu_t)_{t \in [0,T]}$,
where $\diam_F M :=\sup_{x,y \in M}d_F(x,y)$ is the diameter of $(M,F)$.
Therefore $\Ent_{\fm}$ is $(pK(\diam_F M/T)^{p-2})$-convex with respect to $\bH_p$ for $T>0$
when
(1) $K=0$ and $p \in (1,\infty)$; or
(2) $K>0$ and $p \in (1,2)$; or
(3) $K<0$ and $p \in (2,\infty)$.
\end{example}

The following proposition will be useful.

\begin{proposition}[Directional derivatives of $\Ent_{\fm}$]\label{pr:dEnt}
Suppose that $\Ent_{\fm}$ is $K$-convex for some $K \in \R$ and $T>0$,
and take $\mu=\rho\fm \in \cP(M)$ such that $\rho \in H^1(M;\bH)$.
Then we have, for any optimal transport $\mu_t=(\cT_t)_{\sharp}\mu$,
$t \in [0,T]$, as in Theorem~$\ref{th:Monge}$,
\begin{equation}\label{eq:dEnt}
\liminf_{t \downarrow 0} \frac{\Ent_{\fm}(\mu_t)-\Ent_{\fm}(\mu)}{t}
 \ge \int_M \frac{d\rho}{\rho}(\Nabla\varphi) \,d\mu.
\end{equation}
\end{proposition}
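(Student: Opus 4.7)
The plan is to rewrite the entropy difference via the change-of-variables formula, linearize the Jacobian using an elementary inequality, and pass to the limit inside the integral before performing an integration by parts.

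First, I would apply Theorem~\ref{th:cov} with $f(s)=s\log s$ (extended by $f(0)=0$) to obtain, for every $t\in(0,T]$,
\[
\Ent_{\fm}(\mu_t)-\Ent_{\fm}(\mu)=-\int_M\log\bD_{\fm}[\cT_t](x)\,d\mu(x).
\]
Combined with the elementary inequality $\log s\le s-1$ (valid for $s>0$), this yields the lower bound
\[
\frac{\Ent_{\fm}(\mu_t)-\Ent_{\fm}(\mu)}{t}\ge\int_M\frac{1-\bD_{\fm}[\cT_t](x)}{t}\,d\mu(x).
\]

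Next I would establish the pointwise expansion of the Jacobian. Since $\varphi$ is semi-convex, it is twice differentiable $\fm$-almost everywhere, and because $\mu\ll\fm$ the same holds $\mu$-almost everywhere. At any such base point $x$, the curve $s\mapsto\cT_s(x)=\pi_M(\Phi_s(d\varphi_x))$ is a smooth action-minimizing curve with initial velocity $\Nabla\varphi(x)$, and the standard Jacobian-flow identity (the log-derivative of the Jacobian at time zero equals the $\fm$-divergence of the initial velocity field) gives
\[
\bD_{\fm}[\cT_t](x)=1+t\,\Lap^{\bH}_{\fm}\varphi(x)+o(t)\quad\text{as }t\downarrow 0,
\]
so that $(1-\bD_{\fm}[\cT_t](x))/t\to-\Lap^{\bH}_{\fm}\varphi(x)$ for $\mu$-a.e.\ $x$. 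Applying Fatou's lemma to the lower bound above — with the required uniform integrable minorant for $(1-\bD_{\fm}[\cT_t])/t$ supplied by the compactness of $M$ together with the pointwise concavity estimate on $\log\bD_{\fm}[\cT_s]$ coming from the matrix Riccati equation~\eqref{eq:mRicc} along the Hamiltonian flow (and which is consistent with the $K$-convexity hypothesis) — one concludes
\[
\liminf_{t\downarrow 0}\frac{\Ent_{\fm}(\mu_t)-\Ent_{\fm}(\mu)}{t}\ge-\int_M\Lap^{\bH}_{\fm}\varphi\,d\mu.
\]

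To finish, I would integrate by parts. Since $\rho\in H^1(M;\bH)$, one can approximate $\rho$ by $\cC^{\infty}_c(M)$-functions and invoke the defining relation of $\Lap^{\bH}_{\fm}$, which together with $d\mu=\rho\,d\fm$ yields
\[
-\int_M\Lap^{\bH}_{\fm}\varphi\,d\mu=-\int_M\rho\,\Lap^{\bH}_{\fm}\varphi\,d\fm=\int_M d\rho(\Nabla\varphi)\,d\fm=\int_M\frac{d\rho}{\rho}(\Nabla\varphi)\,d\mu,
\]
which is the desired inequality. The main obstacle is the Fatou step: because $\varphi$ is only semi-convex (so $\Nabla\varphi$ is merely an a.e.-defined Borel vector field) and $\rho$ is only $H^1(M;\bH)$, producing the uniform integrable minorant for $(1-\bD_{\fm}[\cT_t])/t$ and then legitimizing the approximation in the integration by parts both require a careful use of the Hamiltonian-flow analysis on the regular set of $\varphi$, with the $K$-convexity hypothesis serving to control the Jacobian uniformly for small $t$.
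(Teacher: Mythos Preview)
Your overall strategy matches the paper's: rewrite $\Ent_{\fm}(\mu_t)-\Ent_{\fm}(\mu)=-\int_M\log\bD_{\fm}[\cT_t]\,d\mu$ via Theorem~\ref{th:cov}, identify the pointwise derivative of the Jacobian as $\Lap^{\bH}_{\fm}\varphi$ (the paper does this through the matrix identity $d\cT_t=B(t)$ and $(\det B)'(0)=\Lap^{\bH}\varphi$ from Subsection~\ref{ssc:Ricc}), and then integrate by parts.

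The one substantive divergence is your detour through $\log s\le s-1$ followed by Fatou. The paper avoids this entirely: it observes that the $K$-convexity hypothesis \eqref{eq:Kconv}, once localized, says precisely that $t\mapsto -\log\bD_{\fm}[\cT_t](x)$ is $K$-convex for $\mu$-a.e.\ $x$. Hence the difference quotients $-\log\bD_{\fm}[\cT_t]/t$ are (after a bounded linear correction) monotone in $t$, and the \emph{monotone convergence theorem} applies directly to give
\[
\lim_{t\downarrow0}\int_M\frac{-\log\bD_{\fm}[\cT_t]}{t}\,d\mu
=\int_M\lim_{t\downarrow0}\frac{-\log\bD_{\fm}[\cT_t]}{t}\,d\mu
=-\int_M\Lap^{\bH}_{\fm}\varphi\,d\mu
\]
as an actual limit, with no need for a separately manufactured minorant. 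Your route via $(1-\bD_{\fm}[\cT_t])/t$ discards the very monotonicity that makes the passage to the limit clean, and then has to recover an integrable lower bound for a quantity that can be much more negative than $-\log\bD_{\fm}[\cT_t]/t$ (since $\bD_{\fm}[\cT_t]$ is unbounded above when $\varphi$ is only semi-convex). That is doable but circuitous; working with the logarithm throughout is simpler.

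On the integration by parts: the $\Lap^{\bH}_{\fm}\varphi$ appearing in the limit is the \emph{pointwise} (Alexandrov) Laplacian, whereas the defining relation you invoke involves the \emph{distributional} Laplacian. The paper bridges this by noting that the singular part of the distributional $\Lap^{\bH}_{\fm}\varphi$ is non-negative (from the second-order structure of \eqref{eq:Lapu} and semi-convexity of $\varphi$), which is exactly where the inequality in \eqref{eq:dEnt} enters; the remaining approximation details are deferred to \cite[Theorem~23.14]{Vi2}. Your last display implicitly treats these two Laplacians as equal, which they need not be.
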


\begin{proof}
It follows from Theorem~\ref{th:cov} with $f(t)=t \log t$ that
\begin{align*}
\Ent_{\fm}(\mu_t) &= \int_M \rho_t \log \rho_t \,d\fm
 =\int_M \frac{\rho}{\bD_{\fm}[\cT_t]} \log \left( \frac{\rho}{\bD_{\fm}[\cT_t]} \right) \bD_{\fm}[\cT_t] \,d\fm \\
&=\Ent_{\fm}(\mu) -\int_M \log(\bD_{\fm}[\cT_t]) \,d\mu.
\end{align*}
(To be precise, we applied Theorem~\ref{th:cov} to $\max\{f,0\}$ and $\max\{-f,0\}$
and took their difference.)
Hence, by localizing the assumption \eqref{eq:Kconv},
we find that $-\log(\bD_{\fm}[\cT_t](x))$ is $K$-convex in $t$ for $\mu$-almost every $x$.
Thus the monotone convergence theorem shows that
\[ \lim_{t \downarrow 0} \int_M \frac{\log(\bD_{\fm}[\cT_t])}{t} \,d\mu
 =\int_M \lim_{t \downarrow 0} \frac{\log(\bD_{\fm}[\cT_t])}{t} \,d\mu
 =\int_M \lim_{t \downarrow 0} \frac{\bD_{\fm}[\cT_t] -1}{t} \,d\mu. \]
We see by calculation
\begin{equation}\label{eq:dT_t}
\lim_{t \downarrow 0} \frac{\bD_{\fm}[\cT_t] -1}{t} =\Lap^{\bH}_{\fm} \varphi
 \qquad \text{$\mu$-almost everywhere}.
\end{equation}
Indeed, observe in the calculation in Subsection~\ref{ssc:Ricc} that $d\cT_t=B(t)$ and $B(0)=I_n$.
Then we have
\[ (\det B)'(0)=\trace(B^{-1} \dot{B})(0) =-\trace(B^{-1} A)(0)
 =\Lap^{\bH} \varphi. \]
Plugging the effect of the measure $\fm$ into this yields \eqref{eq:dT_t}
(recall \eqref{eq:LapHm}).

Notice that the singular part of $\Lap^{\bH}_{\fm} \varphi$ is non-negative
by looking at the second order term of \eqref{eq:Lapu}.
Finally, we obtain \eqref{eq:dEnt} by integration by parts,
see Steps~1--3 in \cite[Theorem~23.14]{Vi2} for details.
$\qedd$
\end{proof}

For $\mu=\rho\fm \in \cP(M)$, define
\[ I_{\fm}(\mu):=\int_M \bH(-d[\log\rho]) \,d\mu\ \in [0,\infty]. \]
This quantity can be thought of as the \emph{Fisher information} adapted to our context.

\begin{theorem}[Functional inequalities]\label{th:funct}
Assume that $\fm[M]=1$ and that $\Ent_{\fm}$ is $K$-convex
in the sense of \eqref{eq:Kconv} for some $K,T>0$.
\begin{enumerate}[{\rm (i)}]
\item \emph{(Talagrand inequality)}
For any $\mu \in \cP(M)$, it holds
\[ C^{\bH}_T(\mu,\fm) \le \frac{2}{KT}\Ent_{\fm}(\mu). \]

\item \emph{(HWI inequality)}
Assume that $\mu=\rho\fm \in \cP(M)$ with $\rho \in H^1(M;\bH)$,
and let $\varphi$ be a semi-convex function deriving the optimal transport from $\mu$ to $\fm$
as in Theorem~$\ref{th:Monge}$.
Then we have
\[ \Ent_{\fm}(\mu) \le TI_{\fm}(\mu)
 +T \int_M \bL(\Nabla\varphi) \,d\mu -\frac{KT}{2}C^{\bH}_T(\mu,\fm). \]
\end{enumerate}
\end{theorem}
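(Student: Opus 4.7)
The strategy for both parts is the classical Otto--Villani route adapted to our non-homogeneous Hamiltonian. First I would assume $\mu = \rho\fm$ (otherwise $\Ent_{\fm}(\mu) = \infty$ and both inequalities are vacuous) and invoke Theorem~\ref{th:Monge} to obtain a semi-convex potential $\varphi$ producing the optimal transport $\mu_t = (\cT_t)_{\sharp}\mu$ with $\mu_0 = \mu$ and $\mu_T = \fm$. Substituting $a = 0$, $b = T$ and parameter $s \in [0,1]$ into \eqref{eq:Kconv}, using $\Ent_{\fm}(\fm) = 0$ (forced by $\fm[M]=1$) together with $T\int_M \bH(d\varphi_x)\,\mu(dx) = C^{\bH}_T(\mu,\fm)$, yields the master inequality
\[
\Ent_{\fm}(\mu_{sT}) \le (1-s)\Ent_{\fm}(\mu) - \frac{K}{2}(1-s)sT\, C^{\bH}_T(\mu,\fm), \qquad s \in [0,1].
\]

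For (i), Jensen's inequality and $\fm[M]=1$ force $\Ent_{\fm}(\mu_{sT}) \ge 0$, so dropping the left-hand side, dividing by $1-s$, and letting $s \uparrow 1$ immediately delivers the Talagrand inequality. For (ii), I would rearrange the master inequality as
\[
\frac{\Ent_{\fm}(\mu_{sT}) - \Ent_{\fm}(\mu)}{sT} \le -\frac{\Ent_{\fm}(\mu)}{T} - \frac{K}{2}(1-s)\, C^{\bH}_T(\mu,\fm),
\]
and take $\liminf_{s \downarrow 0}$. Proposition~\ref{pr:dEnt}---whose hypothesis $\rho \in H^1(M;\bH)$ is exactly assumption (ii)---bounds the left-hand side from below by $\int_M (d\rho/\rho)(\Nabla\varphi)\,d\mu$, so after multiplying by $T$ and rearranging I arrive at
\[
\Ent_{\fm}(\mu) \le -T \int_M \frac{d\rho}{\rho}(\Nabla\varphi)\,d\mu - \frac{KT}{2}\, C^{\bH}_T(\mu,\fm).
\]
The remaining step is the tautological Young-type bound $\alpha(v) \le \bH(\alpha) + \bL(v)$, which is immediate from the definition $\bH(\alpha) = \sup_v \{\alpha(v) - \bL(v)\}$; applying it with $\alpha = -d[\log\rho]$ and $v = \Nabla\varphi$ and integrating against $\mu$ produces $TI_{\fm}(\mu) + T\int_M \bL(\Nabla\varphi)\,d\mu$, completing the HWI inequality.

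I expect the subtle point to be precisely this final Young step. In the Finsler case, Remark~\ref{rm:Legt}(b) upgrades the additive Young bound to the sharper multiplicative one $\alpha(v) \le 2\sqrt{\bH(\alpha) \bL(v)}$, which after AM--GM produces the classical HWI inequality involving $\sqrt{C^{\bH}_T \cdot I_{\fm}}$; for a general Hamiltonian no such sharpening is available, and $I_{\fm}(\mu)$ and $\int_M \bL(\Nabla\varphi)\,d\mu$ must remain as separate additive contributions, which is exactly the asymmetric form of the stated inequality. A secondary technical item is to verify that the semi-convexity of $\varphi$ supplied by Theorem~\ref{th:Monge} suffices to legitimize the use of Proposition~\ref{pr:dEnt}; the one-sided sign on the singular part of $\Lap^{\bH}_{\fm}\varphi$ noted inside its proof is what makes the required lower bound go through.
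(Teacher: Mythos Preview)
Your proposal is correct and follows essentially the same route as the paper: derive the master inequality \eqref{eq:Tal} from \eqref{eq:Kconv} with $\Ent_{\fm}(\fm)=0$, then for (i) use $\Ent_{\fm}\ge 0$, divide by $1-s$ and let $s\uparrow 1$, and for (ii) rearrange, apply Proposition~\ref{pr:dEnt}, and finish with the Young inequality $\alpha(v)\le \bH(\alpha)+\bL(v)$ at $\alpha=-d[\log\rho]$, $v=\Nabla\varphi$. Your remarks on why the sharper Finsler form fails and on the role of the sign of the singular part of $\Lap^{\bH}_{\fm}\varphi$ are also in line with the paper's discussion.
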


\begin{proof}
(i) Let $(\mu_t)_{t \in [0,T]}$ be the optimal transport from $\mu_0=\mu$ to $\mu_T=\fm$.
It follows from \eqref{eq:Kconv} and $\Ent_{\fm}(\fm)=0$ that, for $t \in [0,1)$,
\begin{equation}\label{eq:Tal}
\Ent_{\fm}(\mu_{tT}) \le (1-t)\Ent_{\fm}(\mu) -\frac{K}{2}(1-t)tT C^{\bH}_T(\mu,\fm).
\end{equation}
Since $\Ent_{\fm}(\mu_{tT}) \ge -\log(\fm[M])=0$,
dividing \eqref{eq:Tal} by $1-t$ and letting $t \to 1$ shows the claim.

(ii) We again use \eqref{eq:Tal} and find, for $t \in (0,1]$,
\[ \frac{\Ent_{\fm}(\mu_{tT}) -\Ent_{\fm}(\mu)}{t}
 \le -\Ent_{\fm}(\mu) -\frac{K}{2}(1-t) TC^{\bH}_T(\mu,\fm). \]
Proposition~\ref{pr:dEnt} yields
\begin{align*}
\liminf_{t \downarrow 0} \frac{\Ent_{\fm}(\mu_{tT}) -\Ent_{\fm}(\mu)}{tT}
&\ge \int_M \frac{d\rho}{\rho}(\Nabla \varphi) \,d\mu \\
&\ge  -\int_M \{ \bH(-d[\log\rho]) +\bL(\Nabla \varphi) \} \,d\mu \\
&= -I_{\fm}(\mu) -\int_M \bL(\Nabla\varphi) \,d\mu.
\end{align*}
This completes the proof.
$\qedd$
\end{proof}

In the Riemannian or Finsler case, the HWI inequality (with $T=1$) has the sharper form
\[ \Ent_{\fm}(\mu) \le 2\sqrt{I_{\fm}(\mu) C^{\bL}_1(\mu,\fm)} -\frac{K}{2}C^{\bH}_1(\mu,\fm) \]
and derives the \emph{log-Sobolev inequality}
\[ \Ent_{\fm}(\mu) \le \frac{2}{K}I_{\fm}(\mu) \]
by recalling $C^{\bH}_1=C^{\bL}_1$.
See \cite[\S 5]{OT1} and \cite[\S 6]{OT2} for related functional inequalities
on different kinds of entropies inspired by information theory.

\section{Heat flow}\label{sc:hf}

In this section, we study the evolution equation $\del_t u=\Lap^{\bH}_{\fm} u$
regarded as the \emph{heat equation} associated with
our nonlinear weighted Laplacian $\Lap^{\bH}_{\fm}$.
In the Euclidean setting, there are two ways to interpret heat flow as gradient flow.
The classical strategy is to consider heat flow as the gradient flow
of the Dirichlet energy in the $L^2$-space.
Another one initiated by the seminal work of Jordan et al.~\cite{JKO}
is to consider heat flow as the gradient flow of the relative entropy in the $L^2$-Wasserstein space.
These interpretations and their equivalence were generalized to various settings
(\cite{Sa}, \cite{Ogra}, \cite{GO}, \cite{Er}, \cite{OShf}, \cite{Maa}, \cite{GKO}, \cite{AGShf} etc.)
including singular spaces without differentiable structures.

We shall investigate these two strategies for our heat equation
and see that they produce different gradient flows,
because of the non-homogeneity of the Hamiltonian $\bH$.

Let $M$ be compact throughout the section,
and fix an auxiliary Riemannian metric $g$ of $M$ similarly to the previous section.

\subsection{Heat flow as gradient flow of energy}\label{ssc:hf}

For $(u_t)_{t \ge 0} \subset H^1(M;\bH) \cap H^1(M;\bL)$,
we readily see from \eqref{eq:btau*} that the energy form $\cE:=\cE_M$
defined in Subsection~\ref{ssc:harm} satisfies
\[ \del_{\ve}[\cE(u_t +\ve\phi)]|_{\ve=0} =\int_M d\phi(\Nabla u_t) \,d\fm
 =-\int_M \phi \Lap^{\bH}_{\fm} u_t \,d\fm \]
for every test function $\phi \in \cC^{\infty}(M)$
(provided that $\cE(u_t+\ve \phi)<\infty$ for some $\ve>0$).
Therefore, with respect to the $L^2$-inner product structure,
$\Lap^{\bH}_{\fm} u_t$ is the `gradient vector' of $-\cE$ at $u_t$
and the heat equation $\del_t u=\Lap^{\bH}_{\fm} u$ coincides with
the (descending) gradient flow equation of the potential function $\cE$.

Motivated by the above heuristic argument,
we shall build weak solutions to $\del_t u=\Lap^{\bH}_{\fm} u$
as gradient curves of $\cE$.
In fact, since $\cE$ is a convex and lower semi-continuous functional on $L^2(M;\fm)$
(Lemma~\ref{lm:lsc}), the classical theory due to Br\'ezis, Crandall, Liggett et al applies
(see \cite{Bre}, \cite{CL}).
For completeness, we repeat the construction in \cite[\S 3]{OShf} concerning the Finsler case
with the help of \cite{Ogra}.
Because of the author's familiarity, our construction follows the line of \cite{May}
which deals with the more general situation of convex functions on nonpositively curved metric spaces.

For $u \in H^1(M;\bH)$, define
\[ |\nabla(-\cE)|(u) :=\max\left\{ \limsup_{\bar{u} \to u}
 \frac{\cE(u)-\cE(\bar{u})}{\| \bar{u}-u \|_{L^2}},0 \right\}, \]
where $\bar{u} \in H^1(M;\bH)$ and $\bar{u} \to u$ is in $L^2(M;\fm)$.
By the convexity of $\cE$, we have $|\nabla(-\cE)|(u)=0$ if and only if $\cE(u)=0$,
i.e., $u$ is constant.
Similarly to \cite[Lemma~3.3]{OShf}, we can find a unique direction
attaining $|\nabla(-\cE)|(u)$.

\begin{lemma}\label{lm:Lgv}
If $0<|\nabla(-\cE)|(u)<\infty$, then there exists a unique element $w \in L^2(M;\fm)$
satisfying $\|w\|_{L^2}=|\nabla(-\cE)|(u)$ and
\begin{equation}\label{eq:Lgrad}
\limsup_{\bar{w} \to w} \lim_{t \downarrow 0}
 \frac{\cE(u)-\cE(u+t\bar{w})}{t\|\bar{w}\|_{L^2}} =|\nabla(-\cE)|(u),
\end{equation}
where the convergence $\bar{w} \to w$ is in $L^2(M;\fm)$.
\end{lemma}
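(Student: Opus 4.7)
The plan is to recognize $w$ as the negative of the minimum-norm element of the subdifferential of $\cE$ at $u$, and to invoke convex analysis in the Hilbert space $L^2(M;\fm)$. Since $\cE$ is convex (as $\bH$ is convex in the fibers) and lower semi-continuous on $L^2(M;\fm)$ by Lemma~\ref{lm:lsc}, the Br\'ezis-type machinery cited above applies, with the hypothesis $|\nabla(-\cE)|(u)<\infty$ playing the role of guaranteeing a nonempty subdifferential.

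First I would introduce the subdifferential $\partial\cE(u) := \{\xi \in L^2(M;\fm) \,|\, \cE(v) \ge \cE(u)+\langle \xi,v-u\rangle_{L^2}\text{ for all }v\}$, using the convention $\cE(v)=+\infty$ outside $H^1(M;\bH)$. The slope hypothesis yields the local estimate $\cE(u)-\cE(\bar u) \le (|\nabla(-\cE)|(u)+o(1))\|\bar u-u\|_{L^2}$ near $u$, which by convexity extends to a global affine lower bound for $\cE$; Hahn--Banach then produces $\xi \in \partial\cE(u)$ of norm at most $|\nabla(-\cE)|(u)$. Projecting the origin onto the closed convex set $\partial\cE(u)$ yields a unique minimum-norm element $\xi^\circ$, characterized by $\langle \xi-\xi^\circ,\xi^\circ\rangle_{L^2} \ge 0$ for all $\xi \in \partial\cE(u)$, equivalently $\langle \xi,\xi^\circ\rangle_{L^2} \ge \|\xi^\circ\|_{L^2}^2$. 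Set $w:=-\xi^\circ$.

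To verify the identities, observe that by convexity the quotient $t\mapsto[\cE(u+t\bar w)-\cE(u)]/t$ is monotone non-decreasing, so the one-sided derivative $D_+\cE(u;\bar w):=\lim_{t\downarrow 0}[\cE(u+t\bar w)-\cE(u)]/t$ exists in $[-\infty,\infty]$ and equals the support function $\sup_{\xi\in\partial\cE(u)}\langle\xi,\bar w\rangle_{L^2}$. Hence
\[
\lim_{t\downarrow 0}\frac{\cE(u)-\cE(u+t\bar w)}{t\|\bar w\|_{L^2}}
 = -\frac{D_+\cE(u;\bar w)}{\|\bar w\|_{L^2}}
 \le \frac{\langle -\xi^\circ,\bar w\rangle_{L^2}}{\|\bar w\|_{L^2}}
 \le \|\xi^\circ\|_{L^2},
\]
by Cauchy--Schwarz; combined with the subgradient inequality this already yields $|\nabla(-\cE)|(u)\le\|\xi^\circ\|_{L^2}$. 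For the matching lower bound, take $\bar w=w=-\xi^\circ$: the characterization of $\xi^\circ$ gives $\sup_{\xi\in\partial\cE(u)}\langle\xi,-\xi^\circ\rangle_{L^2}=-\|\xi^\circ\|_{L^2}^2$, so the quotient equals $\|\xi^\circ\|_{L^2}$ exactly, proving $\|w\|_{L^2}=|\nabla(-\cE)|(u)$ and showing equality is attained in \eqref{eq:Lgrad} along the constant sequence $\bar w\equiv w$. For uniqueness, any competitor $w'$ must, by the limsup condition, admit a sequence $\bar w_n\to w'$ saturating the Cauchy--Schwarz step above, which forces $\bar w_n/\|\bar w_n\|_{L^2}\to -\xi^\circ/\|\xi^\circ\|_{L^2}$; combined with $\|w'\|_{L^2}=\|\xi^\circ\|_{L^2}$ one concludes $w'=-\xi^\circ=w$.

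The main technical point to be careful about is that $H^1(M;\bH)$ is only a convex subset of $L^2(M;\fm)$ (not a linear subspace) due to the non-homogeneity of $\bH$, so one must check that $u+t\bar w \in H^1(M;\bH)$ at least for small $t>0$ whenever $\bar w$ is close to $w$ in $L^2(M;\fm)$; this uses convexity of the effective domain of $\cE$ together with $\cE(u)<\infty$. Once this bookkeeping is in place, the argument reduces to a standard application of Hilbert-space convex analysis.
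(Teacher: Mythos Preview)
Your identification of $w$ with $-\xi^\circ$, the negative of the element of minimal norm in $\partial\cE(u)$, is correct and gives a more conceptual picture than the paper's bare-hands construction. The equality $\|\xi^\circ\|_{L^2}=|\nabla(-\cE)|(u)$ (Hahn--Banach for one inequality, the subgradient inequality for the other) and the uniqueness argument are both fine. The gap lies in your verification that this $w$ actually satisfies \eqref{eq:Lgrad}.

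You assert that the inner limit equals $\|\xi^\circ\|_{L^2}$ along the constant choice $\bar w\equiv w=-\xi^\circ$, invoking the identity $D_+\cE(u;\bar w)=\sup_{\xi\in\partial\cE(u)}\langle\xi,\bar w\rangle_{L^2}$. For a convex lower semi-continuous functional one has only $D_+\cE(u;\bar w)\ge\sup_{\xi}\langle\xi,\bar w\rangle_{L^2}$ in general; equality requires continuity of $\cE$ at $u$, which fails here since $\cE\equiv+\infty$ outside $H^1(M;\bH)$. Already for the quadratic Dirichlet energy $\cE(v)=\tfrac12\int_0^1|v'|^2$ on $H^1_0(0,1)$ with $u(x)=x(1-x)$, the unique subgradient is $\xi^\circ\equiv 2$, yet $u-t\xi^\circ=x(1-x)-2t\notin H^1_0$ for any $t\neq 0$, so $D_+\cE(u;-\xi^\circ)=+\infty$ and the inner limit along $\bar w\equiv w$ is $-\infty$, not $\|\xi^\circ\|_{L^2}$. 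Your closing paragraph flags exactly this hazard but then dismisses it via ``convexity of the effective domain of $\cE$'', which does not help: the direction $-\xi^\circ$ need not point into the domain at all.

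The paper bypasses the problem by constructing $w$ as the $L^2$-limit of a normalized maximizing sequence $w_i=(|\nabla(-\cE)|(u)/\|\hat w_i\|_{L^2})\,\hat w_i$, where each $\hat w_i$ automatically has $\cE(u+\hat w_i)<\infty$; convexity then keeps $u+tw_i$ in the domain for small $t$, so the inner limit along $\bar w=w_i$ is genuinely close to the slope, and a parallelogram-type estimate (strict convexity of the $L^2$-norm plus convexity of $\cE$) shows $\{w_i\}$ is Cauchy. To repair your route you would have to exhibit an explicit sequence $\bar w_n\to-\xi^\circ$ with $u+t\bar w_n$ in the domain for small $t$---for instance $\bar w_n=(J_{\tau_n}u-u)/\tau_n$ coming from the resolvent---and check the inner limit along it, which in effect reproduces the paper's argument.
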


\begin{proof}
Take a sequence $\{\hat{w}_i\}_{i \in \N} \subset L^2(M;\fm) \setminus \{0\}$ 
such that $\hat{w}_i \to 0$ and
\[ \lim_{i \to \infty} \frac{\cE(u)-\cE(u+\hat{w}_i)}{\|\hat{w}_i\|_{L^2}}
 =|\nabla(-\cE)|(u), \]
and normalize it as
\[ w_i:=\frac{|\nabla(-\cE)|(u)}{\|\hat{w}_i\|_{L^2}} \hat{w}_i. \]
It follows from the convexity of $\cE$ that
\[ \liminf_{i \to \infty} \lim_{t \downarrow 0} \frac{\cE(u)-\cE(u+tw_i)}{t\|w_i\|_{L^2}}
 \ge \lim_{i \to \infty} \frac{\cE(u)-\cE(u+\hat{w}_i)}{\|\hat{w}_i\|_{L^2}}
 =|\nabla(-\cE)|(u). \]
Thus we have, together with the definition of $|\nabla(-\cE)|(u)$,
\begin{equation}\label{eq:Lgv}
\lim_{i \to \infty} \lim_{t \downarrow 0} \frac{\cE(u)-\cE(u+tw_i)}{t\|w_i\|_{L^2}}
 =|\nabla(-\cE)|(u).
\end{equation}
The convexity of $\cE$ also implies that, for any $i,j \ge 1$,
\begin{align*}
&\lim_{t \downarrow 0} \frac{\cE(u)-\cE(u+t(w_i+w_j)/2)}{t\|(w_i+w_j)/2\|_{L^2}} \\
&\ge \frac{|\nabla(-\cE)|(u)}{\|(w_i+w_j)/2\|_{L^2}}
 \lim_{t \downarrow 0} \left\{ \frac{\cE(u)-\cE(u+tw_i)}{2t\|w_i\|_{L^2}}
 +\frac{\cE(u)-\cE(u+tw_j)}{2t\|w_j\|_{L^2}} \right\},
\end{align*}
we used $\|w_i\|_{L^2}=\|w_j\|_{L^2}=|\nabla(-\cE)|(u)$.
Since the LHS is not greater than $|\nabla(-\cE)|(u)$, we find from \eqref{eq:Lgv}
that $\liminf_{i,j \to \infty}\|w_i+w_j\|_{L^2} \ge 2|\nabla(-\cE)|(u)$.
Combining this with $\|w_i\|_{L^2}=|\nabla(-\cE)|(u)$ for all $i$
implies that $\{w_i\}_{i \in \N}$ is a Cauchy sequence in $L^2(M;\fm)$
and converges to some element $w \in L^2(M;\fm)$.
The desired equation \eqref{eq:Lgrad} holds due to \eqref{eq:Lgv}.

The uniqueness follows from the strict convexity of $\|\cdot\|_{L^2}$
and the convexity of $\cE$.
$\qedd$
\end{proof}

We excluded the case of $|\nabla(-\cE)|(u)=0$ since \eqref{eq:Lgrad}
does not necessarily hold with $w=0$.

\begin{definition}[Gradient vectors of $-\cE$]\label{df:Lgv}
At $u \in H^1(M;\bH)$ with $0<|\nabla(-\cE)|(u)<\infty$,
we define the \emph{gradient vector} of $-\cE$ by $\nabla(-\cE)(u):=w \in L^2(M;\fm)$,
where $w$ is given in Lemma~\ref{lm:Lgv}.
We also set $\nabla(-\cE)(u):=0 \in L^2(M;\fm)$ if $|\nabla(-\cE)|(u)=0$.
\end{definition}

A gradient curve of $\cE$ should be understood as a solution to $\del_t u=\nabla(-\cE)(u)$.
We explain how to formulate and construct it.
Fix $u_0 \in H^1(M;\bH)$ and $\delta>0$.
Denote by $\sU_{\delta}(u_0) \in H^1(M;\bH)$ the unique minimizer of
the strictly convex, lower semi-continuous functional
\begin{equation}\label{eq:U(u)}
u \ \longmapsto \ \cE(u)+\frac{\|u-u_0\|_{L^2}^2}{2\delta}.
\end{equation}
Then, as $k \to \infty$, the discrete approximation scheme $(\sU_{t/k})^k(u_0)$
converges to a curve $(u_t)_{t>0} \subset L^2(M;\fm)$
such that $\lim_{t \downarrow 0} u_t=u_0$ in $L^2(M;\fm)$
and $\cE(u_t) \le \cE(u_0)$ for all $t>0$ (we refer to \cite[Theorem~1.13]{May} for details).
Moreover, the convergence is uniform on $(0,T]$ for each $T>0$,
and $(u_t)_{t \ge 0}$ satisfies the following:
\begin{itemize}
\item The curve $(u_t)_{t>0}$ is locally Lipschitz in $L^2(M;\fm)$ and satisfies
\begin{equation}\label{eq:Lspe}
\lim_{\delta \downarrow 0} \frac{\|u_{t+\delta}-u_t\|_{L^2}}{\delta} =|\nabla(-\cE)|(u_t)
\end{equation}
at all $t \ge 0$ (\cite[Theorems~2.9, 2.17]{May}).
In particular, $|\nabla(-\cE)|(u_t)<\infty$ holds and
the gradient vector $\nabla(-\cE)(u_t) \in L^2(M;\fm)$ is well-defined for all $t>0$.

\item The function $t \longmapsto \cE(u_t)$ is continuous on $[0,\infty)$
and locally Lipschitz on $(0,\infty)$ (\cite[Corollary~2.10]{May}).
Combining \eqref{eq:Lspe} with \cite[Lemma~2.11, Theorem~2.14]{May},
we have, for all $t \ge 0$,
\begin{equation}\label{eq:Espe}
\lim_{\delta \downarrow 0} \frac{\cE(u_t)-\cE(u_{t+\delta})}{\delta} =|\nabla(-\cE)|(u_t)^2.
\end{equation}
\end{itemize}

With the help of \eqref{eq:Lspe} and \eqref{eq:Espe},
the same discussion as Lemma~\ref{lm:Lgv}
(replacing $\hat{w}_i$ and $w_i$ with $u_{t+\delta}-u_t$ and $(u_{t+\delta}-u_t)/\delta$,
respectively) shows that
\begin{equation}\label{eq:dtu}
\lim_{\delta \downarrow 0} \left\| \frac{u_{t+\delta}-u_t}{\delta} -\nabla(-\cE)(u_t) \right\|_{L^2} =0
\end{equation}
for all $t>0$, i.e., $\del_t u_t=\nabla(-\cE)(u_t)$ in the weak sense.
In addition, we obtain the following similarly to \cite[Lemma~6.4]{Ogra}.

\begin{lemma}\label{lm:U(u)}
\begin{enumerate}[{\rm (i)}]
\item Fix arbitrary $t>0$ and put $\sigma_{\delta}:=\sU_{\delta}(u_t)$
by suppressing the dependence on $t$.
Then we have
\[ \lim_{\delta \downarrow 0} \frac{\|u_{t+\delta}-\sigma_{\delta}\|_{L^2}}{\delta}=0. \]
\item
Moreover, $d\sigma_{\delta}$ converges to $du_t$ as $\delta \downarrow 0$
in the $L^2$-norm with respect to $g$.
\end{enumerate}
\end{lemma}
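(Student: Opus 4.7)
The plan is to first identify, for part (i), the discrete velocity $(u_t-\sigma_\delta)/\delta$ as converging to the gradient vector $\nabla(-\cE)(u_t)$, and then to compare with the continuous velocity via \eqref{eq:dtu}. Testing the minimization inequality defining $\sigma_\delta=\sU_\delta(u_t)$ against perturbations $\sigma_\delta+s\bar w$ and letting $s\downarrow 0$ shows that $w_\delta:=(u_t-\sigma_\delta)/\delta$ achieves \eqref{eq:Lgrad} at $\sigma_\delta$; consequently $w_\delta=\nabla(-\cE)(\sigma_\delta)$ and $\|w_\delta\|_{L^2}=|\nabla(-\cE)|(\sigma_\delta)$. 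The variational estimate $\|\sigma_\delta-u_t\|_{L^2}^2\le 2\delta(\cE(u_t)-\cE(\sigma_\delta))$ together with $\cE(\sigma_\delta)\le \cE(u_t)$ controls $\|w_\delta\|_{L^2}$ from above, and the comparison with $u_{t+\delta}$ afforded by \eqref{eq:Lspe}--\eqref{eq:Espe} pushes $\|w_\delta\|_{L^2}\to |\nabla(-\cE)|(u_t)$ as $\delta\downarrow 0$.

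Next I would argue that $w_\delta$ itself converges to $\nabla(-\cE)(u_t)$ in $L^2(M;\fm)$ by reproducing the averaging argument from the proof of Lemma~\ref{lm:Lgv}: if two subsequential limits differed, passing to the midpoint would violate either the convexity of $\cE$ or the strict convexity of the $L^2$-norm, and the uniqueness clause of Lemma~\ref{lm:Lgv} then pins down the limit. Combining $(u_t-\sigma_\delta)/\delta\to -\nabla(-\cE)(u_t)$ with \eqref{eq:dtu} and the triangle inequality settles (i):
\[ \frac{\|u_{t+\delta}-\sigma_\delta\|_{L^2}}{\delta}
 \le \left\|\frac{u_{t+\delta}-u_t}{\delta}-\nabla(-\cE)(u_t)\right\|_{L^2}
 +\|w_\delta+\nabla(-\cE)(u_t)\|_{L^2} \longrightarrow 0. \]

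For part (ii), the preceding work (or directly the bound $\|\sigma_\delta-u_t\|_{L^2}\le\sqrt{2\delta\,\cE(u_t)}$) gives $\sigma_\delta\to u_t$ in $L^2(M;\fm)$, while the variational inequality combined with Lemma~\ref{lm:lsc} gives $\cE(\sigma_\delta)\to\cE(u_t)$. Applying the same lower-semicontinuity argument to the midpoints $(\sigma_\delta+u_t)/2$, whose energies are bounded above by $(\cE(\sigma_\delta)+\cE(u_t))/2$ by convexity of $\bH$, forces $\cE((\sigma_\delta+u_t)/2)\to\cE(u_t)$; the super-linearity of $\bL$ provides an a priori $L^2(g)$-bound on $d\sigma_\delta$ (through a Legendre-type lower bound on $\bH$ and compactness of $M$), so a subsequence converges weakly in $L^2(g)$. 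The midpoint-energy squeeze together with the strong convexity of $\bH$ on $T^*M\setminus 0_{T^*M}$ then identifies the weak limit as $du_t$ and upgrades the convergence to the strong one.

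The hard part is this final upgrade in (ii): because $\bH$ is only pointwise strongly convex and not uniformly convex (Remark~\ref{rm:Lag}(c)), strict convexity alone does not automatically convert weak convergence plus energy convergence into strong $L^2(g)$-convergence, as happens effortlessly in the $2$-homogeneous Finsler setting of \cite{OShf}. One must combine the midpoint-energy squeeze with a Visintin-type strict-convexity argument tailored to the non-homogeneous integrand $\bH$, and must handle the set $\{du_t=0\}$ separately since strong convexity degenerates there.
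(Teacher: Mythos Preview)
Your overall strategy for (i)---reducing to $(\sigma_\delta-u_t)/\delta\to\nabla(-\cE)(u_t)$ and then combining with \eqref{eq:dtu} via the triangle inequality---is the same as the paper's. However, the step ``$w_\delta=\nabla(-\cE)(\sigma_\delta)$'' is not justified and carries a sign inconsistency. Testing the minimizing property against $\sigma_\delta+s\bar w$ only yields $w_\delta\in\partial\cE(\sigma_\delta)$ (a subgradient), not that $-w_\delta$ is the direction of steepest descent with $\|w_\delta\|_{L^2}=|\nabla(-\cE)|(\sigma_\delta)$; since we know only $\sigma_\delta\in H^1(M;\bH)$ and not $\sigma_\delta\in H^1(M;\bL)$, $\cE$ need not be G\^ateaux differentiable at $\sigma_\delta$ and the subdifferential may be strictly larger. (Note also that $w_\delta=(u_t-\sigma_\delta)/\delta$ points \emph{back} toward $u_t$, so it is an ascent direction at $\sigma_\delta$; this is why you later need $w_\delta\to-\nabla(-\cE)(u_t)$, not $+\nabla(-\cE)(u_t)$.) The paper sidesteps this by working entirely at $u_t$: it compares $\sigma_\delta$ against competitors $w_i^\delta:=u_t+(\|\sigma_\delta-u_t\|_{L^2}/\|w_i\|_{L^2})\,w_i$ built from an approximating sequence as in Lemma~\ref{lm:Lgv}, first establishes the key identity \eqref{eq:dsigma}, then derives $\|\sigma_\delta-u_t\|_{L^2}/\delta\to|\nabla(-\cE)|(u_t)$, and only then invokes the parallelogram argument of Lemma~\ref{lm:Lgv}.

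For (ii) there is a more serious gap. Super-linearity of $\bL$ (equivalently of $\bH$) yields only linear lower bounds $\bH(\alpha)\ge a|\alpha|_g-C_a$ for each $a>0$, hence at best an $L^1(g)$-bound on $d\sigma_\delta$ from $\cE(\sigma_\delta)\le\cE(u_t)$---never an $L^2(g)$-bound. In fact the paper explicitly remarks that $\int_M |d\sigma_\delta|_g^2\,d\fm=\infty$ may happen, so your weak-compactness route is unavailable in this generality and the Visintin-type argument cannot get off the ground. The paper instead argues directly that $\{d\sigma_\delta\}_{\delta>0}$ is Cauchy in $L^2(g)$, using strict convexity of $\bH$ together with \eqref{eq:dsigma} (which says $\sigma_\delta$ lies asymptotically along the unique steepest-descent direction from $u_t$), without ever invoking a bound on $\|d\sigma_\delta\|_{L^2(g)}$.
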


\begin{proof}
(i)
Recall that $|\nabla(-\cE)|(u_t)<\infty$.
If $|\nabla(-\cE)|(u_t)=0$, then $\cE(u_t)=0$ and hence $\sigma_{\delta}=u_t$ for all $\delta>0$.
Thus the claim follows from \eqref{eq:Lspe}.

Suppose $|\nabla(-\cE)|(u_t)>0$.
By \eqref{eq:dtu}, the claim is equivalent to
\[ \lim_{\delta \downarrow 0}
 \left\| \frac{\sigma_{\delta}-u_t}{\delta}-\nabla(-\cE)(u_t) \right\|_{L^2} =0. \]
Take a sequence
$\{w_i\}_{i \in \N} \subset L^2(M;\fm)$ as in the proof of Lemma~\ref{lm:Lgv},
namely $\|w_i\|_{L^2}=|\nabla(-\cE)|(u_t)$ for all $i$ and
\[ \lim_{i \to \infty} \lim_{s \downarrow 0} \frac{\cE(u_t)-\cE(u_t +sw_i)}{s\|w_i\|_{L^2}}
 =|\nabla(-\cE)|(u_t). \]
Put
\[ w_i^{\delta}:=u_t+\frac{\|\sigma_{\delta}-u_t\|_{L^2}}{\|w_i\|_{L^2}}w_i \]
and observe from the choice of $\sigma_{\delta}$ that
\[ \liminf_{\delta \downarrow 0} \frac{\cE(u_t)-\cE(\sigma_{\delta})}{\|\sigma_{\delta}-u_t\|_{L^2}}
 \ge \lim_{\delta \downarrow 0} \frac{\cE(u_t)-\cE(w_i^{\delta})}{\|\sigma_{\delta}-u_t\|_{L^2}}
 =\lim_{s \downarrow 0} \frac{\cE(u_t)-\cE(u_t +sw_i)}{s\|w_i\|_{L^2}}. \]
Letting $i \to \infty$ implies
\[ \liminf_{\delta \downarrow 0} \frac{\cE(u_t)-\cE(\sigma_{\delta})}{\|\sigma_{\delta}-u_t\|_{L^2}}
 \ge |\nabla(-\cE)|(u_t), \]
and hence
\begin{equation}\label{eq:dsigma}
\lim_{\delta \downarrow 0} \frac{\cE(u_t)-\cE(\sigma_{\delta})}{\|\sigma_{\delta}-u_t\|_{L^2}}
 =|\nabla(-\cE)|(u_t).
\end{equation}
If we have $\lim_{\delta \downarrow 0}\|\sigma_{\delta}-u_t\|_{L^2}/\delta =|\nabla(-\cE)|(u_t)$,
then we can complete the proof along the same line as Lemma~\ref{lm:Lgv}.
To see this, we observe
\[ \cE(\sigma_{\delta}) +\frac{\|\sigma_{\delta}-u_t\|_{L^2}^2}{2\delta}
 \le \cE(u_t+\delta w_i) +\frac{\delta^2 |\nabla(-\cE)|(u_t)^2}{2\delta} \]
by the choice of $\sigma_{\delta}$, and
\[ \cE(\sigma_{\delta}) \ge \cE(u_t) -|\nabla(-\cE)|(u_t) \|\sigma_{\delta}-u_t\|_{L^2} \]
by the convexity of $\cE$.
Thus we find
\begin{align*}
\frac{\|\sigma_{\delta}-u_t\|_{L^2}^2}{2\delta^2}
&\le \frac{\cE(u_t+\delta w_i)-\cE(\sigma_{\delta})}{\delta}
 +\frac{|\nabla(-\cE)|(u_t)^2}{2} \\
&\le \frac{\cE(u_t+\delta w_i)-\cE(u_t)}{\delta}
 +\frac{\|\sigma_{\delta}-u_t\|_{L^2}}{\delta} |\nabla(-\cE)|(u_t)
 +\frac{|\nabla(-\cE)|(u_t)^2}{2}.
\end{align*}
Letting $\delta \downarrow 0$ and then $i \to \infty$, we obtain
\[ \left( \limsup_{\delta \downarrow 0} \frac{\|\sigma_{\delta}-u_t\|_{L^2}}{\delta} \right)^2
 \le 2|\nabla(-\cE)|(u_t) \cdot \limsup_{\delta \downarrow 0} \frac{\|\sigma_{\delta}-u_t\|_{L^2}}{\delta}
 -|\nabla(-\cE)|(u_t)^2. \]
Therefore $\limsup_{\delta \downarrow 0} \|\sigma_{\delta}-u_t\|_{L^2}/\delta=|\nabla(-\cE)|(u_t)$
and similarly $\liminf_{\delta \downarrow 0} \|\sigma_{\delta}-u_t\|_{L^2}/\delta=|\nabla(-\cE)|(u_t)$.

(ii)
The strict convexity of $\bH$ and \eqref{eq:dsigma} show that $\{d\sigma_{\delta}\}_{\delta>0}$
is a Cauchy sequence and converges to $du_t$, i.e.,
$\lim_{\delta \downarrow 0}\int_M |d\sigma_{\delta}-du_t|_g^2 \,d\fm=0$.
(We remark that, however, $\int_M |d\sigma_{\delta}|_g^2 \,d\fm=\infty$ may happen.)
$\qedd$
\end{proof}

We are ready to show the main result of the subsection.
Due to technical difficulties, we impose the following assumption:
\begin{itemize}
\item[{\bf (A)}]
For each $\ve>0$, there is $C_{\ve}>0$ such that
$|\btau^*(\alpha)-\btau^*(\beta)|_g \le C_{\ve}|\alpha-\beta|_g$
for all $\alpha,\beta \in T_x^*M$, $x \in M$, with $|\btau^*(\alpha)-\btau^*(\beta)|_g \ge \ve$.
\end{itemize}
This is obviously a strong constraint since it is global in the vertical direction,
however, we need this kind of condition to control the behavior of $\Nabla \sigma_{\delta}$
by that of $d\sigma_{\delta}$.

\begin{theorem}[Heat flow as gradient flow of energy]\label{th:Lgf}
Suppose {\bf (A)}.
Given an initial function $u_0 \in H^1(M;\bH)$,
let $(u_t)_{t \ge 0} \subset H^1(M;\bH)$ be a gradient curve of $\cE$ constructed as above.
Then $(u_t)_{t \ge 0}$ solves the heat equation $\del_t u_t=\Lap^{\bH}_{\fm}u_t$
in the weak sense that
\begin{equation}\label{eq:iwhf}
\int_0^{\infty} \!\int_M \del_t \phi \cdot u_t \,d\fm dt
 =\int_0^{\infty} \!\int_M d\phi_t (\Nabla u_t) \,d\fm dt
\end{equation}
for all $\phi \in \cC_c^{\infty}((0,\infty) \times M)$,
where $\phi_t:=\phi(t,\cdot)$.

Moreover, for all $t>0$, the distributional Laplacian $\Lap^{\bH}_{\fm} u_t$ is absolutely continuous
with respect to $\fm$ and its density function coincides with $\nabla(-\cE)(u_t)$.
In particular, we have
\[ \|\Lap^{\bH}_{\fm} u_t\|_{L^2} =\|\nabla(-\cE)(u_t)\|_{L^2} =|\nabla(-\cE)|(u_t) \]
for all $t>0$.
\end{theorem}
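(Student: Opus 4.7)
The plan is to identify the gradient vector $\nabla(-\cE)(u_t)$ with the distributional Laplacian $\Lap^{\bH}_{\fm} u_t$ at each $t>0$, via the resolvent approximation $\sigma_{\delta}:=\sU_{\delta}(u_t)$ introduced in \eqref{eq:U(u)}. Once this identification is established at the level of distributions, the weak heat equation \eqref{eq:iwhf} follows by integrating by parts in $t$ and applying \eqref{eq:dtu}.

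First I would write down the Euler--Lagrange equation for $\sigma_{\delta}$. Since $\sigma_{\delta}$ minimizes the strictly convex functional $u \mapsto \cE(u)+\|u-u_t\|_{L^2}^2/(2\delta)$, differentiating at $\sigma_{\delta}$ in the direction $\phi \in \cC_c^{\infty}(M)$ yields
\[
\int_M d\phi(\Nabla \sigma_{\delta}) \,d\fm + \int_M \phi \cdot \frac{\sigma_{\delta}-u_t}{\delta} \,d\fm = 0.
\]
Next I would pass to the limit $\delta \downarrow 0$ in both integrals. For the second term, Lemma~\ref{lm:U(u)}(i) combined with \eqref{eq:dtu} gives $(\sigma_{\delta}-u_t)/\delta \to \nabla(-\cE)(u_t)$ in $L^2(M;\fm)$, so this term tends to $\int_M \phi \cdot \nabla(-\cE)(u_t) \,d\fm$.

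The delicate step is passing to the limit in the first term, and this is where assumption~{\bf (A)} plays its only role. Lemma~\ref{lm:U(u)}(ii) gives $d\sigma_{\delta} \to du_t$ in $L^2$ with respect to $g$, but the Legendre transform $\btau^*$ is not globally Lipschitz (see Remark~\ref{rm:Legt}(a)), so this does not immediately yield convergence of $\Nabla \sigma_{\delta}=\btau^*(d\sigma_{\delta})$ to $\Nabla u_t=\btau^*(du_t)$. The expected workaround is: given $\ve>0$, split $M$ according to whether $|\Nabla \sigma_{\delta}-\Nabla u_t|_g \ge \ve$ or $<\ve$. On the first set, {\bf (A)} gives the pointwise bound $|\Nabla \sigma_{\delta}-\Nabla u_t|_g \le C_{\ve}|d\sigma_{\delta}-du_t|_g$, so by the Cauchy--Schwarz inequality and compactness of $M$,
\[
\int_M |\Nabla \sigma_{\delta}-\Nabla u_t|_g \,d\fm \le C_{\ve}\sqrt{\fm[M]}\,\|d\sigma_{\delta}-du_t\|_{L^2(g)} + \ve \,\fm[M].
\]
Letting $\delta \downarrow 0$ and then $\ve \downarrow 0$ shows $\Nabla \sigma_{\delta} \to \Nabla u_t$ in $L^1(M;\fm)$, which is enough to pass to the limit against the bounded form $d\phi$. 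This controlling of the nonlinear Legendre transform is the single main obstacle of the proof.

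Combining both limits yields $\int_M d\phi(\Nabla u_t)\,d\fm = -\int_M \phi \cdot \nabla(-\cE)(u_t)\,d\fm$ for all $\phi \in \cC_c^{\infty}(M)$, which is precisely the statement that the distributional Laplacian $\Lap^{\bH}_{\fm} u_t$ equals $\nabla(-\cE)(u_t) \in L^2(M;\fm)$; in particular it is absolutely continuous with respect to $\fm$ with this density. The $L^2$-norm identities then follow from Lemma~\ref{lm:Lgv} and Definition~\ref{df:Lgv}. Finally, for \eqref{eq:iwhf}, given $\phi \in \cC_c^{\infty}((0,\infty)\times M)$, Fubini together with integration by parts in $t$ applied to the locally Lipschitz $L^2$-curve $t \mapsto u_t$ (whose time derivative is $\nabla(-\cE)(u_t)$ by \eqref{eq:dtu}) gives $\int_0^\infty\!\int_M \del_t\phi\cdot u_t\,d\fm\,dt = -\int_0^\infty\!\int_M \phi \cdot \nabla(-\cE)(u_t)\,d\fm\,dt$, and substituting $\nabla(-\cE)(u_t)=\Lap^{\bH}_{\fm}u_t$ followed by the weak definition of $\Lap^{\bH}_{\fm}$ on each slice $\phi_t$ produces the right-hand side of \eqref{eq:iwhf}.
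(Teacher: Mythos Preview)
Your proof is correct and uses the same analytical ingredients as the paper: the Euler--Lagrange equation for the resolvent $\sigma_\delta$, Lemma~\ref{lm:U(u)}, and assumption~{\bf (A)} to pass the nonlinear Legendre transform through the limit $\delta\downarrow 0$. The only difference is organizational---you establish the Laplacian identification $\Lap^{\bH}_{\fm}u_t=\nabla(-\cE)(u_t)$ first and then integrate by parts in $t$ to obtain \eqref{eq:iwhf}, whereas the paper proves the pointwise-in-$t$ limit \eqref{eq:whf} directly (phrased as a one-sided inequality plus the $\phi\mapsto -\phi$ trick) and deduces the Laplacian identification afterward.
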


\begin{proof}
We fix a test function $\phi \in \cC_c^{\infty}((0,\infty) \times M)$ and shall show that
\begin{equation}\label{eq:whf}
\lim_{\delta \downarrow 0} \frac{1}{\delta} \int_M (\phi_{t+\delta} u_{t+\delta}-\phi_t u_t) \,d\fm
 =\int_M \{ \del_t \phi_t \cdot u_t -d\phi_t(\Nabla u_t) \} \,d\fm
\end{equation}
for all $t>0$, which implies \eqref{eq:iwhf} by integration.
We remark that the RHS is well-defined because
\[ |d\phi_t(\Nabla u_t)| \le |d\phi_t|_g |\Nabla u_t|_g \le |d\phi_t|_g (\ve+C_{\ve}|du_t|_g) \]
by {\bf (A)} and
\[ \int_M |du_t|_g \,d\fm \le \int_M \{ C_1 \bH(du_t)+C_2 \} \,d\fm <\infty \]
for some $C_1,C_2>0$ (super-linearity).

Fix $t>0$.
For small $\delta,\ve>0$, let us compare $\sigma_{\delta}:=\sU_{\delta}(u_t)$
and $\sigma_{\delta}^{\ve}:=\sigma_{\delta}+\ve\phi_t$.
By the definition of $\sU_{\delta}(u_t)$, we have
\[ \cE(\sigma_{\delta}) +\frac{\|\sigma_{\delta}-u_t\|_{L^2}^2}{2\delta}
 \le \cE(\sigma_{\delta}^{\ve}) +\frac{\|\sigma_{\delta}^{\ve}-u_t\|_{L^2}^2}{2\delta}. \]
On the one hand, by expanding $\|\sigma_{\delta}-u_t\|_{L^2}^2$
and $\|\sigma_{\delta}^{\ve}-u_t\|_{L^2}^2$,
we find
\[ \lim_{\ve \downarrow 0}
 \frac{\|\sigma_{\delta}^{\ve}-u_t\|_{L^2}^2 -\|\sigma_{\delta}-u_t\|_{L^2}^2}{2\ve}
 =\int_M \phi_t(\sigma_{\delta}-u_t) \,d\fm. \]
On the other hand, we deduce from the convexity of $\bH$ that
\[ \lim_{\ve \downarrow 0}
 \frac{\cE(\sigma_{\delta}^{\ve}) -\cE(\sigma_{\delta})}{\ve}
 =\int_M \lim_{\ve \downarrow 0} \frac{\bH(d\sigma_{\delta}^{\ve})-\bH(d\sigma_{\delta})}{\ve} \,d\fm
 =\int_M d\phi_t(\Nabla \sigma_{\delta}) \,d\fm. \]
Therefore we have, with the help of Lemma~\ref{lm:U(u)}(ii) and {\bf (A)},
\[ \liminf_{\delta \downarrow 0} \frac{1}{\delta}
 \int_M \phi_t(\sigma_{\delta}-u_t) \,d\fm
 \ge -\lim_{\delta \downarrow 0} \int_M d\phi_t(\Nabla \sigma_{\delta}) \,d\fm
 =-\int_M d\phi_t(\Nabla u_t) \,d\fm. \]
To be precise, for each $\ve>0$, we have by {\bf (A)}
\[ \left| \int_M d\phi_t(\Nabla \sigma_{\delta} -\Nabla u_t) \,d\fm \right|
 \le \int_M |d\phi_t|_g (\ve+C_{\ve}|d\sigma_{\delta}-du_t|_g) \,d\fm \]
which tends to $0$ as $\delta \downarrow 0$ and then $\ve \downarrow 0$.
Together with Lemma~\ref{lm:U(u)}(i), we obtain
\begin{align*}
\liminf_{\delta \downarrow 0} \frac{1}{\delta} \int_M (\phi_{t+\delta} u_{t+\delta} -\phi_t u_t) \,d\fm
&= \liminf_{\delta \downarrow 0} \frac{1}{\delta}
 \int_M \{ (\phi_{t+\delta}-\phi_t) u_{t+\delta} +\phi_t (u_{t+\delta}-u_t) \} \,d\fm \\
&= \int_M \del_t \phi_t \cdot u_t \,d\fm
 +\liminf_{\delta \downarrow 0} \frac{1}{\delta} \int_M \phi_t(\sigma_{\delta}-u_t) \,d\fm \\
&\ge \int_M \{ \del_t \phi_t \cdot u_t -d\phi_t(\Nabla u_t) \} \,d\fm.
\end{align*}
By exchanging $\phi$ with $-\phi$, we obtain the desired equation \eqref{eq:whf}.

For a time-independent test function $\phi \in \cC^{\infty}(M)$,
\eqref{eq:dtu} and \eqref{eq:whf} show that
\[ \int_M \phi \nabla(-\cE)(u_t) \,d\fm
 =\lim_{\delta \downarrow 0} \frac{1}{\delta} \int_M \phi (u_{t+\delta}-u_t) \,d\fm
 =-\int_M d\phi(\Nabla u_t) \,d\fm \]
for all $t>0$.
Therefore the distributional Laplacian $\Lap^{\bH}_{\fm} u_t$ is absolutely continuous
with respect to $\fm$ and the density function is $\nabla(-\cE)(u_t)$.
$\qedd$
\end{proof}

Given two solutions $(u_t)_{t \ge 0}$, $(\bar{u}_t)_{t \ge 0}$ to the heat equation
constructed in Theorem~\ref{th:Lgf},
applying \eqref{eq:whf} with $\phi=u-\bar{u}$ (this is possible by \cite[Remark~3.2(i)]{OShf}),
we find
\begin{align*}
\frac{d}{dt}\left[ \| u_t-\bar{u}_t \|_{L^2}^2 \right]
&= \frac{d}{dt} \left[ \int_M \{ (u_t-\bar{u}_t)u_t -(u_t-\bar{u}_t)\bar{u}_t \} \,d\fm \right] \\
&= \int_M \{ \del_t(u_t-\bar{u}_t)(u_t-\bar{u}_t) -d(u_t-\bar{u}_t)(\Nabla u_t -\Nabla \bar{u}_t) \} \,d\fm \\
&= -2 \int_M (du_t-d\bar{u}_t)(\Nabla u_t -\Nabla \bar{u}_t) \,d\fm
\end{align*}
at almost every $t>0$.
The convexity of $\bH$ implies
\[ (du_t-d\bar{u}_t)(\Nabla u_t -\Nabla \bar{u}_t)
 =(du_t-d\bar{u}_t) \left( \sum_{i=1}^n \bH_{\alpha_i}(du_t) \del_{x^i}
 -\sum_{i=1}^n \bH_{\alpha_i}(d\bar{u}_t) \del_{x^i} \right)
 \ge 0, \]
which yields the contraction $(\|u_t-\bar{u}_t\|_{L^2}^2)' \le 0$ of the heat flow.
In particular, if $u_0=\bar{u}_0$ $\fm$-almost everywhere,
then $u_t=\bar{u}_t$ $\fm$-almost everywhere for all $t>0$.

\subsection{Gradient flow of the relative entropy}\label{ssc:Wgf}

We next consider the gradient flow of the relative entropy $\Ent_{\fm}$
with respect to the Lagrangian cost function $C^{\bL}_T$ introduced in Section~\ref{sc:opt}.
We will verify that such a flow is produced from weak solutions to the evolution equation
\begin{equation}\label{eq:Ent-gf}
\del_t u =-\div_{\fm}(u \Nabla[-\log u]),
\end{equation}
which is different from the heat equation $\del_t u=\Lap^{\bH}_{\fm}u$
due to the non-homogeneity of the gradient operator $\Nabla$.

Let us begin with a discussion on how to define gradient flows.
Our strategy follows the metric approach recently intensively studied by
Ambrosio, Gigli, Savar\'e and others (see \cite{AGSbook}).
Given a $\cC^1$-function $f:M \lra \R$,
the \emph{gradient flow} of $f$ with respect to $\bL$ (or $\bH$) is introduced
as the family of maps $\{ \cG_t \}_{t \ge 0}$ sending $x$ to $\cG_t(x)=\eta(t)$,
where $\eta$ is the $\cC^1$-curve solving
\begin{equation}\label{eq:Hgf}
\dot{\eta}(t) =\Nabla (-f)\big( \eta(t) \big), \qquad \eta(0)=x.
\end{equation}
Since $v=\Nabla(-f)(\eta(t))$ is the unique element satisfying
\[ -df_{\eta(t)}(v) =\bH(-df_{\eta(t)}) +\bL(v) \]
in $T_{\eta(t)}M$ and $-df_{\eta(t)}(v) \le \bH(-df_{\eta(t)}) +\bL(v)$ holds in general,
the gradient flow equation \eqref{eq:Hgf} can be rewritten as
\[ (f \circ \eta)'(t) \le -\bH(-df_{\eta(t)}) -\bL\big( \dot{\eta}(t) \big). \]
Integrating this inequality leads to
\[ f\big( \eta(t) \big) \le f\big( \eta(s) \big)
 -\int_s^t \left\{ \bH(-df_{\eta(r)}) +\bL\big( \dot{\eta}(r) \big) \right\} dr \]
for all $0 \le s<t$.
In order to avoid the use of the derivative $df$, we observe that, at any $y \in M$,
\[ \bH(-df_y) =\sup_{v \in T_yM}\{ -df_y(v)-\bL(v) \}
 =\sup_{\gamma} \left\{ \lim_{t \downarrow 0}
 \frac{f(y)-f(\gamma(t))}{t}-\bL\big( \dot{\gamma}(0) \big) \right\}, \]
where $\gamma$ runs over all differentiable curves with $\gamma(0)=y$.
Note finally that
\[ \bL\big( \dot{\eta}(r) \big)
 =\lim_{\delta \downarrow 0}\frac{c^{\bL}_{\delta}(\eta(r),\eta(r+\delta))}{\delta}. \]
Then, it is natural to introduce the following metric definition.

\begin{definition}[Gradient flow of $\Ent_{\fm}$]\label{df:Wgf}
A curve $(\mu_t)_{t \ge 0} \subset \cP(M)$ which is continuous in the weak topology of $\cP(M)$
is called a \emph{gradient curve} of $\Ent_{\fm}$ if $\Ent_{\fm}(\mu_t)<\infty$ for all $t>0$ and if
\begin{equation}\label{eq:Wgf}
\Ent_{\fm}(\mu_t) \le \Ent_{\fm}(\mu_s) -\int_s^t
 \{ |d(-\Ent_{\fm})|_{\bH}(\mu_r) +|\dot{\mu}_r|_{\bL} \} \,dr
\end{equation}
holds for all $0 \le s<t$.
Here we set
\begin{align*}
|\dot{\mu}_r|_{\bL}
&:= \limsup_{\delta \downarrow 0}\frac{C^{\bL}_{\delta}(\mu_r,\mu_{r+\delta})}{\delta}, \\
|d(-\Ent_{\fm})|_{\bH}(\nu)
&:= \sup_{(\nu_s)_{s \in [0,T]}} \left\{ \limsup_{s \downarrow 0}
 \frac{\Ent_{\fm}(\nu)-\Ent_{\fm}(\nu_s)}{s}-|\dot{\nu}_0|_{\bL} \right\}
\end{align*}
for $\nu \in \cP(M)$ with $\Ent_{\fm}(\nu)<\infty$,
where $(\nu_s)_{s \in [0,T]}$ runs over all optimal transports with $\nu_0=\nu$
(possibly $\nu_t \equiv \nu$, so that $|d(-\Ent_{\fm})|_{\bH} \ge 0$).
\end{definition}

We assume that $\Ent_{\fm}$ is $K$-convex in the sense of \eqref{eq:Kconv}
for some $K \in \R$ and all $T>0$,
and shall show that, for a weak solution $(\rho_t)_{t \ge 0}$ to \eqref{eq:Ent-gf}
with $\mu_t:=\rho_t \fm \in \cP(M)$, the curve $(\mu_t)_{t \ge 0}$ enjoys \eqref{eq:Wgf}.
For simplicity, suppose that the following condition is satisfied:
\begin{itemize}
\item[{\bf (C)}]
$\rho_t$ is locally $g$-Lipschitz on $(0,\infty) \times M$,
$\lim_{r \downarrow 0}d\rho_{t+r}=d\rho_t$ in the $L^2$-norm with respect to $g$
and $\inf_M \rho_t>0$ for every $t>0$
(then clearly $\Ent_{\fm}(\mu_t)<\infty$ for $t>0$).
\end{itemize}
Observe that the heat flow constructed in the previous subsection satisfies
$\lim_{r \downarrow 0}d\rho_{t+r}=d\rho_t$ similarly to Lemma~\ref{lm:U(u)}(ii).

First of all, it follows from Proposition~\ref{pr:dEnt} that
\begin{equation}\label{eq:Wgf1}
|d(-\Ent_{\fm})|_{\bH}(\mu) \le \int_M \bH\left( -\frac{d\rho}{\rho} \right) d\mu.
\end{equation}
Next, observe from \eqref{eq:Ent-gf} (with the test function $\log \rho_t$) that
\begin{equation}\label{eq:Wgf2}
\frac{d}{dt} [\Ent_{\fm}(\mu_t)] =\int_M d(\log\rho_t) (\rho_t \Nabla[-\log\rho_t]) \,d\fm
 =\int_M \frac{d\rho_t}{\rho_t} (\Nabla[-\log\rho_t]) \,d\mu_t.
\end{equation}
Finally, $|\dot{\mu}_t|_{\bL}$ is estimated similarly to \cite[Proposition~3.7]{GKO} as follows.

\begin{lemma}\label{lm:Wgf3}
At each $t>0$, it holds that
\[ |\dot{\mu}_t|_{\bL} \le \int_M \bL(\Nabla[-\log \rho_t]) \,d\mu_t. \]
\end{lemma}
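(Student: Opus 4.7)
The plan is to construct a coupling between $\mu_t$ and $\mu_{t+\delta}$ driven by the flow of the velocity field $V_s := \Nabla[-\log\rho_s]$ that naturally appears in \eqref{eq:Ent-gf}, and then to bound the resulting Lagrangian transport cost. First, I would rewrite \eqref{eq:Ent-gf} as the continuity equation $\del_s\rho_s + \div_{\fm}(\rho_s V_s) = 0$. Thanks to assumption {\bf (C)} (local $g$-Lipschitz regularity of $\rho_s$ on $(0,\infty)\times M$ together with $\inf_M\rho_s>0$), the vector field $V_s$ is locally Lipschitz for positive times, so it generates a flow $\Psi_{s,t}:M\lra M$ characterized by $\Psi_{t,t}=\id_M$ and $\del_s[\Psi_{s,t}(x)]=V_s(\Psi_{s,t}(x))$. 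Uniqueness for the continuity equation in this regular setting gives $(\Psi_{s,t})_{\sharp}\mu_t=\mu_s$ for all $s$ near $t$, so $\pi_{\delta}:=(\id_M\times\Psi_{t+\delta,t})_{\sharp}\mu_t$ is a coupling of $\mu_t$ with $\mu_{t+\delta}$.

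For each $x\in M$, the flow curve $\eta_x(s):=\Psi_{t+s,t}(x)$, $s\in[0,\delta]$, joins $x$ to $\Psi_{t+\delta,t}(x)$ with $\dot{\eta}_x(s)=V_{t+s}(\eta_x(s))$, so it is an admissible competitor in the infimum defining $c^{\bL}_{\delta}$:
\[ c^{\bL}_{\delta}\big(x,\Psi_{t+\delta,t}(x)\big) \le \int_0^{\delta} \bL\big( V_{t+s}(\Psi_{t+s,t}(x)) \big)\,ds. \]
Integrating this against $\mu_t$, applying Fubini, and using $(\Psi_{t+s,t})_{\sharp}\mu_t=\mu_{t+s}$, I would deduce
\[ \frac{C^{\bL}_{\delta}(\mu_t,\mu_{t+\delta})}{\delta} \le \frac{1}{\delta}\int_0^{\delta}\!\int_M \bL(V_{t+s})\,d\mu_{t+s}\,ds. \]

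Finally, I would pass to $\limsup_{\delta\downarrow 0}$. The $L^2$-convergence $d\rho_{t+s}\to d\rho_t$ guaranteed by {\bf (C)}, combined with $\inf_M\rho_s>0$ and the smoothness of $\bL$ away from $0_{TM}$, makes the map $s\longmapsto\int_M\bL(V_{t+s})\,d\mu_{t+s}$ continuous at $s=0$, and the limit is exactly $\int_M\bL(\Nabla[-\log\rho_t])\,d\mu_t$.

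The step I expect to require the most care is justifying $(\Psi_{s,t})_{\sharp}\mu_t=\mu_s$ under the regularity of {\bf (C)} alone, since a classical Cauchy--Lipschitz argument for $V_s$ depends on controlling $V_s$ uniformly in $s$. If a pointwise ODE flow is too delicate to set up directly, I would instead invoke a superposition / representation principle for the continuity equation to write $(\mu_s)$ as a probability measure on absolutely continuous curves with velocity $V_s$, and run the same competitor estimate along those curves, bypassing the need for a classical flow altogether.
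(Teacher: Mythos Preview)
Your approach is correct in spirit but takes a genuinely different route from the paper. The paper argues on the \emph{dual} side: it invokes the Kantorovich duality of Bernard--Buffoni, writing
\[
C^{\bL}_s(\mu_t,\mu_{t+s})=\sup_{\varphi}\left\{\int_M\varphi_{t+s}\,d\mu_{t+s}-\int_M\varphi_t\,d\mu_t\right\}
\]
over Lipschitz subsolutions of $\partial_r\varphi+\bH(d\varphi)\le 0$, then differentiates $\int_M\varphi_r\rho_r\,d\fm$ using \eqref{eq:Ent-gf} and bounds $-\bH(d\varphi_{t+r})+d\varphi_{t+r}(\Nabla[-\log\rho_{t+r}])\le\bL(\Nabla[-\log\rho_{t+r}])$ via the Legendre inequality. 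No flow, no coupling is ever built. Your argument works instead on the \emph{primal} side, producing an explicit competitor curve for $c^{\bL}_{\delta}$. The dual route is lighter here because it needs only the weak formulation of \eqref{eq:Ent-gf} together with a duality statement already available in the literature; your route is more geometric but front-loads the regularity issues.

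On that point, there is a genuine gap in your primary plan: {\bf (C)} gives that $\rho$ is locally $g$-Lipschitz in $(t,x)$, which makes $d\rho_s$ merely \emph{bounded}, not Lipschitz, so $V_s=\btau^*(-d\log\rho_s)$ need not be Lipschitz in $x$ and the Cauchy--Lipschitz flow $\Psi_{s,t}$ is not available from {\bf (C)} alone. Your fallback via the superposition principle does rescue the argument: with $V_s$ bounded on the compact $M$ one has $\int\!\!\int|V_s|_g\,d\mu_s\,ds<\infty$, so the continuity equation admits a representation by a probability measure on absolutely continuous curves with velocity $V_s$, and your competitor estimate along those curves goes through verbatim. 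Just be explicit that it is the superposition principle, not an ODE flow, that carries the weight; the final passage to the limit then matches the paper's use of bounded convergence and $d\rho_{t+r}\to d\rho_t$.
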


\begin{proof}
For $s>0$, we have by \cite[Proposition~21]{BeBu}
\[ C^{\bL}_s(\mu_t,\mu_{t+s}) =\sup_{\varphi}
 \left\{ \int_M \varphi_{t+s} \,d\mu_{t+s} -\int_M \varphi_t \,d\mu_t \right\}, \]
where the supremum is taken over all $g$-Lipschitz functions
$\varphi:[t,t+s] \times M \lra \R$ which are $\cC^1$ on $(t,t+s) \times M$ and satisfy
the inequality (of Hamilton--Jacobi type)
\[ \del_r \varphi +\bH(d\varphi) \le 0 \quad \text{on}\ (t,t+s) \times M. \]
Together with \eqref{eq:Ent-gf}, we obtain
\begin{align*}
\int_M \varphi_{t+s} \,d\mu_{t+s} -\int_M \varphi_t \,d\mu_t
&= \int_M \int_0^s \del_r [\varphi_{t+r} \rho_{t+r}] \,dr d\fm \\
&\le \int_0^s \int_M \{ -\bH(d\varphi_{t+r})
 +d\varphi_{t+r}(\Nabla[-\log \rho_{t+r}]) \} \,d\mu_{t+r} dr \\
&\le \int_0^s \int_M \bL(\Nabla[-\log \rho_{t+r}]) \,d\mu_{t+r} dr.
\end{align*}
Therefore we conclude that
\[ |\dot{\mu}_t|_{\bL} \le \lim_{s \downarrow 0} \frac{1}{s}
 \int_0^s \int_M \bL(\Nabla[-\log \rho_{t+r}]) \,d\mu_{t+r} dr
 =\int_M \bL(\Nabla[-\log \rho_t]) \,d\mu_t \]
by the bounded convergence theorem
(since $\rho_t$ is locally $g$-Lipschitz and $\inf_M \rho_t>0$)
and $\lim_{r \downarrow 0}d\rho_{t+r}=d\rho_t$.
$\qedd$
\end{proof}

Combining above three estimates, we immediately obtain the following.

\begin{theorem}[Gradient flow of $\Ent_{\fm}$]\label{th:Wgf}
Assume that $\Ent_{\fm}$ is $K$-convex in the sense of \eqref{eq:Kconv}
for some $K \in \R$ and all $T>0$.
Let $(\rho_t)_{t \ge 0}$ be a weak solution to the equation \eqref{eq:Ent-gf} satisfying
{\bf (C)} and $\mu_t:=\rho_t \fm \in \cP(M)$ for all $t \ge 0$.
Then $(\mu_t)_{t \ge 0}$ is a gradient curve of $\Ent_{\fm}$ in the sense of \eqref{eq:Wgf}.
\end{theorem}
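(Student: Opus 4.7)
The strategy is to show the pointwise-in-time differential inequality
\[
\frac{d}{dt}[\Ent_{\fm}(\mu_t)] \le -|d(-\Ent_{\fm})|_{\bH}(\mu_t) -|\dot{\mu}_t|_{\bL}
\]
for almost every $t>0$, and then integrate from $s$ to $t$ to obtain \eqref{eq:Wgf}. The three ingredients are already in place: \eqref{eq:Wgf1} bounds $|d(-\Ent_{\fm})|_{\bH}$ from above, Lemma~\ref{lm:Wgf3} bounds $|\dot{\mu}_t|_{\bL}$ from above, and \eqref{eq:Wgf2} computes the derivative of the entropy exactly. What ties them together is the Fenchel--Young equality in the Legendre duality between $\bL$ and $\bH$.

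The key observation is that $\Nabla[-\log\rho_t]=\btau^*(-d\log\rho_t)$ by definition of the gradient vector, so the duality
\[
\alpha\big(\btau^*(\alpha)\big) = \bH(\alpha) +\bL\big(\btau^*(\alpha)\big),
\qquad \alpha \in T^*M,
\]
applied to $\alpha = -d\log\rho_t$ and contracted against $\Nabla[-\log\rho_t]$ yields the pointwise identity
\[
\frac{d\rho_t}{\rho_t}\big(\Nabla[-\log\rho_t]\big)
= -\bH(-d\log\rho_t) -\bL\big(\Nabla[-\log\rho_t]\big)
\]
on $\{\rho_t>0\}=M$ (using $\inf_M \rho_t>0$ from {\bf (C)}). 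Substituting this into \eqref{eq:Wgf2} gives
\[
\frac{d}{dt}[\Ent_{\fm}(\mu_t)]
= -\int_M \bH(-d\log\rho_t) \,d\mu_t -\int_M \bL\big(\Nabla[-\log\rho_t]\big) \,d\mu_t,
\]
and this is precisely the sum of the two upper bounds in \eqref{eq:Wgf1} and Lemma~\ref{lm:Wgf3}, each taken with the opposite sign.

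To conclude, I would first justify that \eqref{eq:Wgf2} is legitimate: assumption {\bf (C)} (local Lipschitzness, positivity of $\rho_t$, and $L^2$-continuity of $d\rho_t$) is exactly what is needed to differentiate $\int_M \rho_t\log\rho_t\,d\fm$ under the integral sign and to plug $\log\rho_t$ into the weak formulation of \eqref{eq:Ent-gf}. Then I would verify that $t\mapsto \Ent_{\fm}(\mu_t)$ is absolutely continuous on $[s,t]$, so that integration of the pointwise identity above combined with the bounds \eqref{eq:Wgf1} and Lemma~\ref{lm:Wgf3} yields
\[
\Ent_{\fm}(\mu_t) -\Ent_{\fm}(\mu_s)
= -\int_s^t \!\!\int_M \{\bH(-d\log\rho_r) +\bL(\Nabla[-\log\rho_r])\} \,d\mu_r\, dr
\le -\int_s^t \{|d(-\Ent_{\fm})|_{\bH}(\mu_r) +|\dot{\mu}_r|_{\bL}\}\, dr.
\]
Finally, the weak continuity of $t\mapsto \mu_t$ in $\cP(M)$ follows from the continuity of $\rho_t$ in $L^1(M;\fm)$, which is in turn a consequence of {\bf (C)}.

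The main subtlety is really conceptual rather than computational: whereas in the Finsler case the Fenchel--Young equality at $\alpha=-d\log\rho_t$ produces the symmetric expression $F^*(-d\log\rho_t)^2 = F(\Nabla[-\log\rho_t])^2$ and the three quantities align into a single Fisher-information-like object, here the non-homogeneity of $\bH$ forces $\bH(-d\log\rho_t)$ and $\bL(\Nabla[-\log\rho_t])$ to remain genuinely distinct. This is exactly why the driving vector field is $\Nabla[-\log u]$ rather than $\Nabla u / u$, and why the resulting equation \eqref{eq:Ent-gf} does not reduce to $\del_t u = \Lap^{\bH}_{\fm}u$. Once this is recognized, the proof becomes a two-line assembly of the three preceding lemmas.
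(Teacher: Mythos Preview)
Your proposal is correct and follows exactly the same route as the paper's proof: rewrite the entropy derivative \eqref{eq:Wgf2} via the Fenchel--Young equality $\alpha(\btau^*(\alpha))=\bH(\alpha)+\bL(\btau^*(\alpha))$ at $\alpha=-d\log\rho_t$, then apply the bounds \eqref{eq:Wgf1} and Lemma~\ref{lm:Wgf3}. The paper's own argument is simply the two-line version of what you wrote, omitting the auxiliary justifications (absolute continuity, weak continuity) that you spell out.
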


\begin{proof}
Since $d\rho_t/\rho_t =d[\log \rho_t]$, we deduce from \eqref{eq:Wgf2}, \eqref{eq:Wgf1}
and Lemma~\ref{lm:Wgf3} that
\begin{align*}
\frac{d}{dt} [\Ent_{\fm}(\mu_t)]
&=-\int_M \left\{ \bH(-d[\log \rho_t]) +\bL(\Nabla[-\log\rho_t]) \right\} d\mu_t \\
&\le -|d(-\Ent_{\fm})|_{\bH}(\mu_t) -|\dot{\mu}_t|_{\bL}.
\end{align*}
This completes the proof.
$\qedd$
\end{proof}

\begin{example}\label{ex:p-gf}
For the $p$-homogeneous deformation $\bH=|\cdot|_g^p/p$ of a Riemannian metric $g$,
the equation \eqref{eq:Ent-gf} turns out
\[ \del_t u =-\div_{\fm}(u |\Nabla^g[-\log u]|_g^{p-2} \cdot \Nabla^g[-\log u])
 =\div_{\fm}\left( \left| \frac{\Nabla^g u}{u} \right|_g^{p-2} \cdot \Nabla^g u \right). \]
This recovers the case of $\bL(v)=|v|^q/q$ on $\R^n$ studied in \cite[\S 11.3]{AGSbook} and \cite{Agu},
in which more general entropy functionals are investigated.
For instance, employing the \emph{R\'enyi--Tsallis entropy}
\[ S_k(\rho\fm):=\frac{1}{k(k-1)} \int_M \rho^k \,d\fm, \qquad
 k=\frac{2p-3}{p-1}=3-q \]
leads to the \emph{$p$-Laplacian equation}:
$\del_t u=\Lap^{\bH}_{\fm}u=\div_{\fm}(|\Nabla^g u|_g^{p-2} \cdot \Nabla^g u)$.
Indeed, Proposition~\ref{pr:dEnt} is read as
\[ \liminf_{t \downarrow 0} \frac{S_k(\mu_t)-S_k(\mu)}{t}
 \ge \int_M \rho^{k-2} d\rho(\Nabla\varphi) \,d\mu, \]
and then \eqref{eq:Wgf1}, \eqref{eq:Wgf2} and Lemma~\ref{lm:Wgf3} become
\begin{align*}
|d(-S_k)|_{\bH}(\mu)
 &\le \int_M \bH(-\rho^{k-2} d\rho) \,d\mu
 =\frac{1}{p} \int_M |d\rho|_g^p \rho^{(k-2)p} \,d\mu, \\
\frac{d}{dt}[S_k(\mu_t)]
 &= -\int_M d\rho_t(\Nabla \rho_t) \rho_t^{k-2} \,d\fm
 =-\int_M \{ \bH(d\rho_t)+\bL(\Nabla \rho_t) \} \rho_t^{k-2} \,d\fm, \\
|\dot{\mu}_t|_{\bL}
 &\le \int_M \bL\left( -\frac{\Nabla \rho_t}{\rho_t} \right) d\mu_t
 =\frac{1}{q} \int_M |\Nabla^g \rho_t|_g^p \rho_t^{-q} \,d\mu_t
\end{align*}
(compare the last inequality with \cite[Lemma~7.2]{AGSden}).
Since $(k-2)p=-q$, we find as desired
\[ \frac{d}{dt}[S_k(\mu_t)]
 =-\int_M \left( \frac{|d\rho_t|_g^p}{p} +\frac{|\Nabla^g \rho_t|_g^p}{q} \right) \rho_t^{-q} \,d\mu_t
 \le -|d(-S_k)|_{\bH}(\mu_t) -|\dot{\mu}_t|_{\bL}. \]
\end{example}

In the Finsler case, the equation \eqref{eq:Ent-gf} coincides with
the heat equation with respect to the reverse Finsler structure $\rev{F}(v):=F(-v)$,
$\del_t \rho =-\Lap^F_{\fm}(-\rho)$, as shown in \cite{OShf}.
In general, it is unclear if one can regard \eqref{eq:Ent-gf}
as the heat equation of some Hamiltonian.

\renewcommand{\thesection}{\Alph{section}}
\setcounter{section}{0}
\section{Appendix}\label{sc:apdx}

In this appendix, for self-containedness,
we give a way of constructing canonical frames as well as a detailed calculation
of the curvature operator in coordinates.
In both subjects, we follow the line of \cite{Le2} with proofs.

\subsection{How to construct canonical frames}\label{ssc:cano}

Fix $\alpha \in T_x^*M \setminus \{0\}$ and put $\bm{\alpha}(t):=\Phi_t(\alpha)$
for $t \in (-\ve,\ve)$ throughout the appendix.
Our starting ingredient is a family of smooth curves
$\bm{\xi}_i(t) =\sum_{j=1}^n \xi_i^j(t) \del_{\alpha_j} \in \cV_{\bm{\alpha}(t)}$, $i=1,\ldots,n$,
orthonormal with respect to the inner product introduced in Lemma~\ref{lm:quad}, namely
\[ \sum_{k,l=1}^n \bH_{\alpha_k \alpha_l}\big( \bm{\alpha}(t) \big)
 \xi_i^k(t) \xi_j^l(t) =\delta_{ij} \qquad \text{for all}\ i,j\ \text{and}\ t. \]
We first show that the curves $\bar{e}^t_i :=(d\Phi_t)^{-1}(\bm{\xi}_i(t)) \in \cJ_{\alpha}^t$
satisfy the first condition in \eqref{eq:cano}.

\begin{lemma}\label{lm:orth}
We have $\omega(\dot{\bar{e}}^t_i,\bar{e}^t_j)=\delta_{ij}$ for all $i,j$ and $t$.
\end{lemma}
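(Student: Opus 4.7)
The plan is to reduce the identity at a general time $t$ to a bilinear version of Lemma~\ref{lm:quad} applied at the shifted basepoint $\bm{\alpha}(t)$, and then transport the result back to $\alpha$ via the symplectic invariance \eqref{eq:omega} of the Hamiltonian flow.

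First I would observe that the proof of Lemma~\ref{lm:quad} in fact yields a bilinear extension: for any $\beta \in T^*M \setminus 0$ and any two smooth curves $\bm{\xi}(s), \bm{\eta}(s) \in \cV_{\Phi_s(\beta)}$, setting $e_s := (d\Phi_s)^{-1}(\bm{\xi}(s))$ and $f_s := (d\Phi_s)^{-1}(\bm{\eta}(s))$, one has
\[
\omega(\dot e_0, f_0) = \sum_{k,l=1}^n \xi^k(0)\, \eta^l(0)\, \bH_{\alpha_k \alpha_l}(\beta).
\]
This requires no new idea: the calculation in the proof of Lemma~\ref{lm:quad} shows that the horizontal part of $\dot e_0$ is $-\sum_{i,j} \xi^j(0) \bH_{\alpha_i \alpha_j}(\beta) \del_{x^i}$, which depends only on $\bm{\xi}(0)$ and pairs under $\omega$ against the purely vertical $f_0$ to give the displayed formula, while the vertical part of $\dot e_0$ contributes nothing since $\omega$ vanishes on $\cV_\beta \times \cV_\beta$.

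With this in hand I would fix $t$, set $\beta := \bm{\alpha}(t)$, and apply the bilinear identity to the two curves $s \mapsto \bm{\xi}_i(t+s)$ and $s \mapsto \bm{\xi}_j(t+s)$, writing the resulting auxiliary objects as $\tilde e_i(s) := (d\Phi_s)^{-1}(\bm{\xi}_i(t+s)) \in T_{\bm{\alpha}(t)}(T^*M)$ (and analogously $\tilde e_j(s)$). The orthonormality hypothesis $\sum_{k,l} \bH_{\alpha_k \alpha_l}(\bm{\alpha}(t)) \xi_i^k(t) \xi_j^l(t) = \delta_{ij}$ immediately yields $\omega(\dot{\tilde e}_i(0), \tilde e_j(0)) = \delta_{ij}$.

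The cocycle identity $\Phi_{t+s} = \Phi_s \circ \Phi_t$ then gives $d\Phi_t(\bar e^{t+s}_i) = \tilde e_i(s)$ for every $s$; since $d\Phi_t$ is a fixed linear map between the fibers $T_\alpha(T^*M)$ and $T_{\bm{\alpha}(t)}(T^*M)$, differentiating in $s$ produces $\dot{\tilde e}_i(0) = d\Phi_t(\dot{\bar e}^t_i)$, while of course $\tilde e_j(0) = d\Phi_t(\bar e^t_j)$. Symplectic invariance \eqref{eq:omega} then concludes:
\[
\omega(\dot{\bar e}^t_i, \bar e^t_j) = \omega\big( d\Phi_t(\dot{\bar e}^t_i), d\Phi_t(\bar e^t_j) \big) = \omega(\dot{\tilde e}_i(0), \tilde e_j(0)) = \delta_{ij}.
\]
No substantive obstacle is expected; the only mild care required is to track the fibers where the various time-derivatives live and to invoke the linearity of $d\Phi_t$ on fibers in order to commute the $s$-derivative past the pushforward.
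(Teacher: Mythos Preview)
Your proof is correct and is essentially the same as the paper's: both isolate the horizontal part of the time derivative of $\bar e_i^t$ (which is exactly the content of your bilinear extension of Lemma~\ref{lm:quad}) and then invoke the symplectic invariance \eqref{eq:omega} of $d\Phi_t$. The only cosmetic difference is that the paper performs the computation directly at general $t$, writing $\dot{\bar e}^t_i$ as $(d\Phi_t)^{-1}$ of an explicit vector at $\bm{\alpha}(t)$ and pairing, whereas you first shift the basepoint to $\bm{\alpha}(t)$ via the cocycle identity $\Phi_{t+s}=\Phi_s\circ\Phi_t$ and then quote the $t=0$ case; the underlying calculation is identical.
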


\begin{proof}
Note that, by $\dot{\Phi}_t=\ora{\bH} \circ \Phi_t$,
\begin{align*}
\dot{\bar{e}}^t_i
&=(d\Phi_t)^{-1}\left( \sum_{k=1}^n \dot{\xi}_i^k(t) \del_{\alpha_k} \right) \\
&\quad
 -(d\Phi_t)^{-1} \left[ \sum_{k,l=1}^n \xi_i^l(t) \left\{ \bH_{\alpha_k \alpha_l}\big( \bm{\alpha}(t) \big) \del_{x^k}
 -\bH_{x^k \alpha_l}\big( \bm{\alpha}(t) \big) \del_{\alpha_k} \right\} \right].
\end{align*}
Therefore we have, by \eqref{eq:omega},
\begin{align*}
\omega(\dot{\bar{e}}^t_i,\bar{e}^t_j)
&=-\omega\left( \sum_{k,l=1}^n
 \bH_{\alpha_k \alpha_l}\big( \bm{\alpha}(t) \big) \xi_i^l(t) \del_{x^k}, \bm{\xi}_j(t) \right)
 =\sum_{k,l=1}^n \bH_{\alpha_k \alpha_l}\big( \bm{\alpha}(t) \big) \xi_i^l(t) \xi_j^k(t) \\
&=\delta_{ij}.
\end{align*}
$\qedd$
\end{proof}

One can construct a canonical frame from an orthonormal frame as follows.

\begin{proposition}{\rm (see \cite[Proposition~3.2]{Le2})}\label{pr:O^t}
Given a frame $\ol{E}^t=(\bar{e}^t_i)_{i=1}^n$ orthonormal in the sense of Lemma~{\rm \ref{lm:orth}},
put $\Omega^t_{ij}:=\omega(\dot{\bar{e}}^t_i,\dot{\bar{e}}^t_j)$
and let $O^t$ be the solution to
\[ \dot{O}^t=\frac{1}{2} O^t \Omega^t, \qquad O^0=I_n. \]
Then $E^t=(e^t_i)_{i=1}^n$ with $e^t_i:=\sum_{j=1}^n O^t_{ij} \bar{e}^t_j$
gives a canonical frame.
\end{proposition}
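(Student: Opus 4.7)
The plan is to verify the two defining conditions of a canonical frame in Definition~\ref{df:cano} for $E^t=(e_i^t)_{i=1}^n$, and to do this one checks first that $O^t$ is orthogonal, then uses this together with $\cJ^t_\alpha$ being Lagrangian to get the symplectic normalization, and finally extracts the condition $\ddot{e}^t_i\in\cJ^t_\alpha$ from the ODE defining $O^t$.

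First I would observe that $\Omega^t$ is skew-symmetric, since $\Omega^t_{ij}=\omega(\dot{\bar{e}}^t_i,\dot{\bar{e}}^t_j)=-\omega(\dot{\bar{e}}^t_j,\dot{\bar{e}}^t_i)=-\Omega^t_{ji}$. Differentiating $O^t(O^t)^{\sT}$ and using $\dot O^t=\tfrac{1}{2}O^t\Omega^t$ together with $(\Omega^t)^{\sT}=-\Omega^t$ then gives $\tfrac{d}{dt}[O^t(O^t)^{\sT}]=0$, so $O^t(O^t)^{\sT}=I_n$ for all $t$. I would also note that $\cJ^t_\alpha=(d\Phi_t)^{-1}(\cV_{\bm{\alpha}(t)})$ is a Lagrangian subspace of $T_\alpha(T^*M)$: $\cV_{\bm{\alpha}(t)}$ is Lagrangian because $\omega(\del_{\alpha_i},\del_{\alpha_j})=0$, and $d\Phi_t$ preserves $\omega$ by \eqref{eq:omega}. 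Hence $\omega(\bar e^t_k,\bar e^t_l)=0$ for all $k,l,t$.

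Next, for the first condition in \eqref{eq:cano}, expand
\[ \dot e^t_i=\sum_{k=1}^n\bigl(\dot O^t_{ik}\bar e^t_k+O^t_{ik}\dot{\bar e}^t_k\bigr), \]
and compute $\omega(\dot e^t_i,e^t_j)$ using $\omega(\bar e^t_k,\bar e^t_l)=0$ and Lemma~\ref{lm:orth}:
\[ \omega(\dot e^t_i,e^t_j)=\sum_{k,l=1}^n O^t_{ik}O^t_{jl}\,\omega(\dot{\bar e}^t_k,\bar e^t_l)=\sum_{k=1}^n O^t_{ik}O^t_{jk}=\delta_{ij}, \]
where the last equality is the orthogonality of $O^t$ established above.

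The main computation is the second condition $\ddot e^t_i\in\cJ^t_\alpha$, and this is where the specific ODE $\dot O^t=\tfrac12 O^t\Omega^t$ enters. I would expand $\ddot{\bar e}^t_k$ in the frame $(\bar e^t_l;\dot{\bar e}^t_l)_l$ of $T_\alpha(T^*M)$, writing $\ddot{\bar e}^t_k=\sum_l A^t_{kl}\bar e^t_l+\sum_l B^t_{kl}\dot{\bar e}^t_l$. Pairing with $\bar e^t_l$ under $\omega$ and using $\omega(\dot{\bar e}^t_m,\bar e^t_l)=\delta_{ml}$ and $\omega(\bar e^t_m,\bar e^t_l)=0$ gives $B^t_{kl}=\omega(\ddot{\bar e}^t_k,\bar e^t_l)$. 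Differentiating Lemma~\ref{lm:orth} yields $\omega(\ddot{\bar e}^t_k,\bar e^t_l)=-\omega(\dot{\bar e}^t_k,\dot{\bar e}^t_l)=-\Omega^t_{kl}$, so $B^t_{kl}=-\Omega^t_{kl}$. Expanding
\[ \ddot e^t_i=\sum_k\ddot O^t_{ik}\bar e^t_k+2\sum_k\dot O^t_{ik}\dot{\bar e}^t_k+\sum_{k,l}O^t_{ik}\bigl(A^t_{kl}\bar e^t_l+B^t_{kl}\dot{\bar e}^t_l\bigr), \]
the coefficient of $\dot{\bar e}^t_l$ equals $2\dot O^t_{il}-\bigl(O^t\Omega^t\bigr)_{il}$, which vanishes precisely because $\dot O^t=\tfrac12 O^t\Omega^t$. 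Hence $\ddot e^t_i$ has only $\bar e^t_k$-components, which lie in $\cJ^t_\alpha$, and the proof is complete.

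The main obstacle (and the most delicate step) is identifying $B^t_{kl}=-\Omega^t_{kl}$ and recognizing that the defining ODE for $O^t$ is exactly the obstruction-killing condition; everything else is either bookkeeping or a direct symplectic computation.
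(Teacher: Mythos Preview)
Your proof is correct and follows essentially the same approach as the paper's: both show $O^t$ is orthogonal from the skew-symmetry of $\Omega^t$, verify $\omega(\dot e^t_i,e^t_j)=\delta_{ij}$ using $\omega(\bar e^t_k,\bar e^t_l)=0$ and Lemma~\ref{lm:orth}, and obtain $\ddot e^t_i\in\cJ^t_\alpha$ by differentiating $\omega(\dot{\bar e}^t_k,\bar e^t_l)=\delta_{kl}$ to get $\omega(\ddot{\bar e}^t_k,\bar e^t_l)=-\Omega^t_{kl}$ and checking that the ODE $\dot O^t=\tfrac12 O^t\Omega^t$ kills the obstruction. The only cosmetic difference is that the paper packages the last step as $\omega(\ddot E,\ol E)=O\Omega+O\omega(\ddot{\ol E},\ol E)=O[\omega(\dot{\ol E},\ol E)]'=0$ and invokes the Lagrangian property of $\cJ^t_\alpha$ to conclude, whereas you expand $\ddot e^t_i$ explicitly in the basis $(\bar e^t_l;\dot{\bar e}^t_l)$ and read off the vanishing of the $\dot{\bar e}^t_l$-coefficients directly.
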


\begin{proof}
We will sometimes omit the evaluations at time $t$ for brevity.
First of all, $O^t$ is orthogonal for all $t$ since
\[ (O O^{\sT})'
 =\frac{1}{2} O \Omega O^{\sT} +\frac{1}{2} O \Omega^{\sT} O^{\sT}
 =\frac{1}{2} O (\Omega +\Omega^{\sT}) O^{\sT} =0, \]
where $O^{\sT}$ denotes the transpose of $O$.
To see that $E^t$ is orthonormal, observe
\[ \omega(\dot{E},E)
 =\frac{1}{2} \omega(O \Omega \ol{E},O \ol{E}) +\omega(O \dot{\ol{E}},O \ol{E})
 =\frac{1}{2} O \Omega \omega(\ol{E},\ol{E}) O^{\sT} +O \omega(\dot{\ol{E}},\ol{E}) O^{\sT}. \]
Then it follows from (the proof of) Lemma~\ref{lm:Lagsp}
and $\omega(\dot{\ol{E}},\ol{E}) \equiv I_n$ that
$\omega(\dot{E}^t,E^t) \equiv I_n$.

It remains to show $\ddot{e}^t_i \in \cJ^t_{\alpha}$.
We have
\[ \ddot{E} =\left( \frac{1}{2}O \Omega \ol{E} +O \dot{\ol{E}} \right)'
 =\frac{1}{2}(O \Omega)' \ol{E} +O \Omega \dot{\ol{E}} +O \ddot{\ol{E}}, \]
and hence
\[ \omega(\ddot{E},\ol{E})
 =\omega(O \Omega \dot{\ol{E}},\ol{E}) +\omega(O \ddot{\ol{E}},\ol{E})
 =O \Omega +O \omega(\ddot{\ol{E}},\ol{E})
 =O \left[ \omega(\dot{\ol{E}},\ol{E}) \right]' \equiv 0. \]
This implies $\ddot{e}^t_i \in \cJ^t_{\alpha}$
because the intersection of the kernels of $\omega(\cdot,\bar{e}^t_j)$,
$j=1,\ldots,n$, coincides with $\cJ^t_{\alpha}$.
$\qedd$
\end{proof}

\subsection{Coordinate representation}\label{ssc:coord}

We compute the curvature operator in coordinates by using the canonical frame in the previous subsection.
First of all, Proposition~\ref{pr:O^t} yields the following (see \cite[Proposition~3.3]{Le2}):
\begin{align}
\bR^0_{\alpha}(E^0)
&= -(O^t \ol{E}^t)''|_{t=0}
 =-\left( \frac{1}{2}O^t \Omega^t \ol{E}^t +O^t \dot{\ol{E}^t} \right)' \bigg|_{t=0}
 \nonumber\\
&= -\frac{1}{4}(\Omega^0)^2 \ol{E}^0 -\frac{1}{2}\dot{\Omega}^0 \ol{E}^0
 -\Omega^0 \dot{\ol{E}^0} -\ddot{\ol{E}^0} \label{eq:R^0}.
\end{align}

We choose a coordinate satisfying $\bH_{\alpha_i \alpha_j}(\alpha)=\delta_{ij}$ for all $i,j$,
so that we can take $\bm{\xi}_i(0)=\del_{\alpha_i}$.
Then it holds that, by the calculation in Lemma~\ref{lm:orth},
\begin{equation}\label{eq:e'}
\dot{\bar{e}}^0_i =-\del_{x^i} +\sum_{k=1}^n
 \{ \bH_{x^k \alpha_i}(\alpha)+\dot{\xi}_i^k(0) \} \del_{\alpha_k}.
\end{equation}
This implies
\begin{equation}\label{eq:Omega}
\Omega_{ij}^0 =\omega(\dot{\bar{e}}^0_i,\dot{\bar{e}}^0_j)
 =\bH_{x^i \alpha_j}(\alpha) -\bH_{x^j \alpha_i}(\alpha) +\dot{\xi}_j^i(0) -\dot{\xi}_i^j(0).
\end{equation}
Now, for further simplification,
let us assume $\bH_{\alpha_i \alpha_j}(\bm{\alpha}(t))=\delta_{ij}$
and put $\bm{\xi}_i(t)=\del_{\alpha_i}$ for all $t$.
It is always possible to choose such a coordinate by noticing that
\[ \left( \sum_{i=1}^n a_i \del_{x^i},\sum_{j=1}^n b_j \del_{x^j} \right)
 \ \longmapsto\ \sum_{i,j=1}^n a_i b_j \bL_{v^i v^j}\big( \dot{\eta}(t) \big),
 \quad \eta(t):=\pi_M \big( \bm{\alpha}(t) \big), \]
gives an inner product of $T_{\eta(t)}M$ for each $t$
and by taking $(x^i)_{i=1}^n$ such that $(\del_{x^i})_{i=1}^n$
is orthonormal with respect to this inner product at every $\eta(t)$.
Then we have, by omitting evaluations at $\alpha$ and $t=0$ for brevity,
\begin{align*}
\ddot{\bar{e}}_i^0
&= -\sum_{k=1}^n \left\{ [\bH_{\alpha_k \alpha_i} \circ \bm{\alpha}]' \del_{x^k}
  -[\bH_{x^k \alpha_i} \circ \bm{\alpha}]' \del _{\alpha_k} \right\} \\
&\quad +\sum_{k,l=1}^n \left\{
 \bH_{\alpha_k \alpha_i} (\bH_{\alpha_l x^k} \del_{x^l} -\bH_{x^l x^k} \del_{\alpha_l})
 -\bH_{x^k \alpha_i} (\bH_{\alpha_l \alpha_k} \del_{x^l} -\bH_{x^l \alpha_k} \del_{\alpha_l}) \right\} \\
&=\sum_{k=1}^n (\bH_{\alpha_k x^i}-\bH_{x^k \alpha_i}) \del_{x^k}
 +\sum_{k=1}^n \left\{ [\bH_{x^k \alpha_i} \circ \bm{\alpha}]' -\bH_{x^k x^i}
 +\sum_{l=1}^n \bH_{x^l \alpha_i} \bH_{x^k \alpha_l} \right\} \del_{\alpha_k}.
\end{align*}
Combining this with \eqref{eq:R^0}, \eqref{eq:e'} and \eqref{eq:Omega},
we see that the horizontal part of $\bR^0_{\alpha}(e^0_i)$ indeed vanishes
(so that $\bR^0_{\alpha}(e^0_i) \in \cV_{\alpha}$).
We finally calculate
\begin{align*}
\dot{\Omega}^0_{ij}
&=\omega(\dot{\bar{e}}^0_i,\ddot{\bar{e}}^0_j)
 +\omega(\ddot{\bar{e}}^0_i,\dot{\bar{e}}^0_j) \\
&= [\bH_{x^i \alpha_j} \circ \bm{\alpha}]'
 -\bH_{x^i x^j} +\sum_{k=1}^n \bH_{x^k \alpha_j} \bH_{x^i \alpha_k}
 +\sum_{k=1}^n \bH_{x^k \alpha_i}(\bH_{\alpha_k x^j} -\bH_{x^k \alpha_j}) \\ 
&\quad -[\bH_{x^j \alpha_i} \circ \bm{\alpha}]'
 +\bH_{x^j x^i} -\sum_{k=1}^n \bH_{x^k \alpha_i} \bH_{x^j \alpha_k}
 -\sum_{k=1}^n \bH_{x^k \alpha_j}(\bH_{\alpha_k x^i} -\bH_{x^k \alpha_i}) \\ 
&= [(\bH_{x^i \alpha_j} -\bH_{x^j \alpha_i}) \circ \bm{\alpha}]'.
\end{align*}
Consequently, we obtain that the coefficient of $\del_{\alpha_j}=e^0_j$ in
$\bR^0_{\alpha}(e^0_i)$ is
\begin{align}
&-\frac{1}{4} \sum_{k=1}^n
 (\bH_{x^i \alpha_k}-\bH_{x^k \alpha_i})(\bH_{x^k \alpha_j}-\bH_{x^j \alpha_k})
 -\frac{1}{2} [(\bH_{x^i \alpha_j} -\bH_{x^j \alpha_i}) \circ \bm{\alpha}]' \nonumber\\
& -\sum_{k=1}^n (\bH_{x^i \alpha_k}-\bH_{x^k \alpha_i}) \bH_{x^j \alpha_k}
 -[\bH_{x^j \alpha_i} \circ \bm{\alpha}]' +\bH_{x^j x^i}
 -\sum_{k=1}^n \bH_{x^k \alpha_i} \bH_{x^j \alpha_k} \nonumber\\
&= \frac{1}{4} \sum_{k=1}^n
 (\bH_{x^i \alpha_k}-\bH_{x^k \alpha_i})(\bH_{x^j \alpha_k}-\bH_{x^k \alpha_j})
 -\frac{1}{2} [(\bH_{x^i \alpha_j} +\bH_{x^j \alpha_i}) \circ \bm{\alpha}]' \nonumber\\
&\quad -\sum_{k=1}^n \bH_{x^i \alpha_k} \bH_{x^j \alpha_k} +\bH_{x^i x^j}.
\label{eq:Rij}
\end{align}
Note that this is indeed symmetric in $i$ and $j$.

{\small

}


\begin{thebibliography}{AGS2}

\bibitem[Agr1]{Agr1}
A.~A.~Agrachev, Curvature and hyperbolicity of Hamiltonian systems (Russian),
Tr.\ Mat.\ Inst.\ Steklova {\bf 256} (2007), Din.\ Sist.\ i Optim., 31--53;
translation in Proc.\ Steklov Inst.\ Math.\ {\bf 256} (2007), 26--46.

\bibitem[Agr2]{Agr}
A.~A.~Agrachev, Geometry of optimal control problems and Hamiltonian systems,
Nonlinear and optimal control theory, 1--59,
Lecture Notes in Math., {\bf 1932}, Springer, Berlin, 2008.

\bibitem[AG]{AG}
A.~A.~Agrachev and R.~V.~Gamkrelidze,
Feedback-invariant optimal control theory and differential geometry.~I.~Regular extremals,
J.\ Dynam.\ Control Systems {\bf 3} (1997), 343--389.

\bibitem[AL1]{ALge}
A.~A.~Agrachev and P.~W.~Y.~Lee,
Generalized Ricci curvature bounds for three dimensional contact subriemannian manifolds,
Preprint (2009).
Available at {\sf arXiv:0903.2550}

\bibitem[AL2]{ALbl}
A.~A.~Agrachev and P.~W.~Y.~Lee,
Bishop and Laplacian comparison theorems on three dimensional contact subriemannian manifolds with symmetry,
to appear in J.\ Geom.\ Anal.
Available at {\sf arXiv:1105.2206}

\bibitem[Agu]{Agu}
M.~Agueh,
Finsler structure in the $p$-Wasserstein space and gradient flows,
C.\ R.\ Math.\ Acad.\ Sci.\ Paris {\bf 350} (2012), 35--40.

\bibitem[AD]{AD}
J.~C.~\'Alvarez Paiva and C.~E.~Dur\'an,
Geometric invariants of fanning curves,
Adv.\ in Appl.\ Math.\ {\bf 42} (2009), 290--312.

\bibitem[AGS1]{AGSbook}
L.~Ambrosio, N.~Gigli and G.~Savar\'e,
Gradient flows in metric spaces and in the space of probability measures,
Birkh\"auser Verlag, Basel, 2005.

\bibitem[AGS2]{AGSden}
L.~Ambrosio, N.~Gigli and G.~Savar\'e,
Density of Lipschitz functions and equivalence of weak gradients in metric measure spaces,
Rev.\ Mat.\ Iberoam.\ {\bf 29} (2013), 969--996.

\bibitem[AGS3]{AGShf}
L.~Ambrosio, N.~Gigli and G.~Savar\'e,
Calculus and heat flow in metric measure spaces and applications to spaces with Ricci bounds from below,
Invent.\ Math.\ {\bf 195} (2014), 289--391.

\bibitem[BE]{BE}
D.~Bakry and M.~\'Emery, Diffusions hypercontractives (French),
S\'eminaire de probabilit\'es, XIX, 1983/84, 177--206, Lecture Notes in Math.\
{\bf 1123}, Springer, Berlin, 1985.

\bibitem[BCS]{BCS}
D.~Bao, S.-S.~Chern and Z.~Shen, An introduction to Riemann--Finsler geometry,
Springer-Verlag, New York, 2000.

\bibitem[BB]{BeBu}
P.~Bernard and B.~Buffoni, Optimal mass transportation and Mather theory,
J.\ Eur.\ Math.\ Soc.\ (JEMS) {\bf 9} (2007), 85--121.

\bibitem[Bre]{Br}
Y.~Brenier, Polar factorization and monotone rearrangement of vector-valued functions,
Comm.\ Pure Appl.\ Math.\ {\bf 44} (1991), 375--417.

\bibitem[Br\'e]{Bre}
H.~Br\'ezis,
Op\'erateurs maximaux monotones et semi-groupes de contractions dans les espaces de Hilbert (French),
North-Holland Publishing Co., Amsterdam-London; American Elsevier Publishing Co., Inc., New York, 1973.

\bibitem[CMS]{CMS}
D.~Cordero-Erausquin, R.~J.~McCann and M.~Schmuckenschl\"ager,
A Riemannian interpolation inequality \`a la Borell, Brascamp and Lieb,
Invent.\ Math.\ {\bf 146} (2001), 219--257.

\bibitem[CL]{CL}
M.~G.~Crandall and T.~M.~Liggett,
Generation of semi-groups of nonlinear transformations on general Banach spaces,
Amer.\ J.\ Math.\ {\bf 93} (1971), 265--298.

\bibitem[Er]{Er}
M.~Erbar, The heat equation on manifolds as a gradient flow in the Wasserstein space,
Ann.\ Inst.\ Henri Poincar\'e Probab.\ Stat.\ {\bf 46} (2010), 1--23.

\bibitem[FF]{FF}
A.~Fathi and A.~Figalli, Optimal transportation on non-compact manifolds,
Israel J.\ Math.\ {\bf 175} (2010), 1--59.

\bibitem[Fo]{Fo}
P.~Foulon,
G\'eom\'etrie des \'equations diff\'erentielles du second ordre (French),
Ann.\ Inst.\ H.\ Poincar\'e Phys.\ Th\'eor.\ {\bf 45} (1986), 1--28.

\bibitem[FGM]{FGM}
N.~Fusco, M.~Gori and F.~Maggi, A remark on Serrin's theorem,
NoDEA Nonlinear Differential Equations Appl.\ {\bf 13} (2006), 425--433.

\bibitem[Gi]{Gi}
N.~Gigli, On the differential structure of metric measure spaces and applications,
Preprint (2012). Available at {\sf arXiv:1205.6622}

\bibitem[GKO]{GKO}
N.~Gigli, K.~Kuwada and S.~Ohta, Heat flow on Alexandrov spaces,
Comm.\ Pure Appl.\ Math.\ {\bf 66} (2013), 307--331.

\bibitem[GO]{GO}
N.~Gigli and S.~Ohta,
First variation formula in Wasserstein spaces over compact Alexandrov spaces,
Canad.\ Math.\ Bull.\ {\bf 55} (2012), 723--735.

\bibitem[Gr]{Gr}
J.~Grifone,
Structure presque-tangente et connexions.~I (French),
Ann.\ Inst.\ Fourier (Grenoble) {\bf 22} (1972), 287--334.

\bibitem[JKO]{JKO}
R.~Jordan, D.~Kinderlehrer and F.~Otto,
The variational formulation of the Fokker--Planck equation,
SIAM J.\ Math.\ Anal.\ {\bf 29} (1998), 1--17.

\bibitem[Ju]{Ju}
N.~Juillet, Geometric inequalities and generalized Ricci bounds in the Heisenberg group,
Int.\ Math.\ Res.\ Not.\ IMRN (2009), 2347--2373.

\bibitem[Ke1]{Ke1}
M.~Kell, On interpolation and curvature via Wasserstein geodesics,
Preprint (2013). Available at {\sf arXiv:1311.5407}

\bibitem[Ke2]{Ke2}
M.~Kell, $q$-heat flow and the gradient flow of the Renyi entropy in the $p$-Wasserstein space,
Preprint (2014). Available at {\sf arXiv:1401.0840}

\bibitem[Le1]{Le1}
P.~W.~Y.~Lee, A remark on the potentials of optimal transport maps,
Acta Appl.\ Math.\ {\bf 115} (2011), 123--138.

\bibitem[Le2]{Le2}
P.~W.~Y.~Lee, Displacement interpolations from a Hamiltonian point of view,
J.\ Funct.\ Anal.\ {\bf 265} (2013), 3163--3203.

\bibitem[LLZ]{LLZ}
P.~W.~Y.~Lee, C.~Li and I.~Zelenko,
Measure contraction properties of contact sub-Riemannian manifolds with symmetry,
Preprint (2013). Available at {\sf arXiv:1304.2658}

\bibitem[LV1]{LV1}
J.~Lott and C.~Villani, Weak curvature conditions and functional inequalities,
J.\ Funct.\ Anal.\ {\bf 245} (2007), 311--333.

\bibitem[LV2]{LV2}
J.~Lott and C.~Villani,
Ricci curvature for metric-measure spaces via optimal transport,
Ann.\ of Math.\ (2) {\bf 169} (2009), 903--991.

\bibitem[Maa]{Maa}
J.~Maas, Gradient flows of the entropy for finite Markov chains,
J.\ Funct.\ Anal.\ {\bf 261} (2011), 2250--2292.

\bibitem[May]{May}
U.~F.~Mayer, Gradient flows on nonpositively curved metric spaces and harmonic maps,
Comm.\ Anal.\ Geom.\ {\bf 6} (1998), 199--253.

\bibitem[Mc]{Mc2}
R.~J.~McCann, Polar factorization of maps on Riemannian manifolds,
Geom.\ Funct.\ Anal.\ {\bf 11} (2001), 589--608.

\bibitem[Oh1]{Omcp}
S.~Ohta, On the measure contraction property of metric measure spaces,
Comment.\ Math.\ Helv.\ {\bf 82} (2007), 805--828.

\bibitem[Oh2]{Ogra}
S.~Ohta, Gradient flows on Wasserstein spaces over compact Alexandrov spaces,
Amer.\ J.\ Math.\ {\bf 131} (2009), 475--516.

\bibitem[Oh3]{Ouni}
S.~Ohta, Uniform convexity and smoothness, and their applications in Finsler geometry,
Math.\ Ann.\ {\bf 343} (2009), 669--699.

\bibitem[Oh4]{Oint}
S.~Ohta, Finsler interpolation inequalities,
Calc.\ Var.\ Partial Differential Equations {\bf 36} (2009), 211--249.

\bibitem[Oh5]{ORand}
S.~Ohta, Vanishing S-curvature of Randers spaces,
Differential Geom.\ Appl.\ {\bf 29} (2011), 174--178.

\bibitem[Oh6]{Ospl}
S.~Ohta, Splitting theorems for Finsler manifolds of nonnegative Ricci curvature,
to appear in J.\ Reine Angew.\ Math. Available at {\sf arXiv:1203.0079}

\bibitem[OS1]{OShf}
S.~Ohta and K.-T.~Sturm, Heat flow on Finsler manifolds,
Comm.\ Pure Appl.\ Math.\ {\bf 62} (2009), 1386--1433.

\bibitem[OS2]{OSnc}
S.~Ohta and K.-T.~Sturm, Non-contraction of heat flow on Minkowski spaces,
Arch.\ Ration.\ Mech.\ Anal.\ {\bf 204} (2012), 917--944.

\bibitem[OS3]{OSbw}
S.~Ohta and K.-T.~Sturm, Bochner--Weitzenb\"ock formula and Li--Yau estimates on Finsler manifolds,
Adv.\ Math.\ {\bf 252} (2014), 429--448.

\bibitem[OT1]{OT1}
S.~Ohta and A.~Takatsu, Displacement convexity of generalized relative entropies,
Adv.\ Math.\ {\bf 228} (2011), 1742--1787.

\bibitem[OT2]{OT2}
S.~Ohta and A.~Takatsu, Displacement convexity of generalized relative entropies.~II,
Comm.\ Anal.\ Geom.\ {\bf 21} (2013), 687--785.

\bibitem[OV]{OV}
F.~Otto and C.~Villani,
Generalization of an inequality by Talagrand and links with the logarithmic Sobolev inequality,
J.\ Funct.\ Anal.\ {\bf 173} (2000), 361--400.

\bibitem[Qi]{Qi}
Z.~Qian, Estimates for weighted volumes and applications,
Quart.\ J.\ Math.\ Oxford Ser.\ (2) {\bf 48} (1997), 235--242.

\bibitem[vRS]{vRS}
M.-K.~von Renesse and K.-T.~Sturm,
Transport inequalities, gradient estimates, entropy and Ricci curvature,
Comm.\ Pure Appl.\ Math.\ {\bf 58} (2005), 923--940.

\bibitem[Sa]{Sa}
G.~Savar\'e,
Gradient flows and diffusion semigroups in metric spaces under lower curvature bounds,
C.\ R.\ Math.\ Acad.\ Sci.\ Paris {\bf 345} (2007), 151--154.

\bibitem[Se]{Se}
J.~Serrin, On the definition and properties of certain variational integrals,
Trans.\ Amer.\ Math.\ Soc.\ {\bf 101} (1961), 139--167.

\bibitem[Sh1]{Shvol}
Z.~Shen, Volume comparison and its applications in Riemann-Finsler geometry,
Adv.\ Math.\ {\bf 128} (1997), 306--328.

\bibitem[Sh2]{Shlec}
Z.~Shen, Lectures on Finsler geometry,
World Scientific Publishing Co., Singapore, 2001.

\bibitem[St1]{StI}
K.-T.~Sturm, On the geometry of metric measure spaces.~I,
Acta Math.\ {\bf 196} (2006), 65--131.

\bibitem[St2]{StII}
K.-T.~Sturm, On the geometry of metric measure spaces.~II,
Acta Math.\ {\bf 196} (2006), 133--177.

\bibitem[Vi]{Vi2}
C.~Villani, Optimal transport, old and new, Springer-Verlag, Berlin, 2009.

\bibitem[WX]{WX}
G.~Wang and C.~Xia, A sharp lower bound for the first eigenvalue on Finsler manifolds,
Ann.\ Inst.\ H.\ Poincar\'e Anal.\ Non Lin\'eaire {\bf 30} (2013), 983--996.

\bibitem[WW]{WW}
G.~Wei and W.~Wylie, Comparison geometry for the Bakry-Emery Ricci tensor,
J.\ Differential Geom.\ {\bf 83} (2009), 377--405.

\bibitem[Xi]{Xi}
C.~Xia, Local gradient estimate for harmonic functions on Finsler manifolds,
to appear in Calc.\ Var.\ Partial Differential Equations.
Available at {\sf arXiv:1308.3609}

\end{thebibliography}
\end{document}